\newtheorem{theorem}{Theorem}
\newtheorem{lemma}[theorem]{Lemma}
\newtheorem{assumption}[theorem]{Assumption}
\newtheorem{remark}[theorem]{Remark}
\numberwithin{theorem}{section}
\numberwithin{equation}{section}
\renewcommand{\Re}{\operatorname{Re}}
\newcommand{\eq}{:=}
\newcommand{\pd}[2]{\frac{\partial #1}{\partial #2}}
\newcommand{\grad}{\boldsymbol \nabla}
\renewcommand{\div}{\grad \cdot}
\newcommand{\curl}{\grad \times}
\newcommand{\ccurl}{\boldsymbol{\operatorname{curl}}}
\newcommand{\ddiv}{\operatorname{div}}
\newcommand{\jmp}[1]{\,[\![#1]\!]}
\newcommand{\avg}[1]{\{\!\!\{#1\}\!\!\}}
\newcommand{\BE}{\boldsymbol E}
\newcommand{\BH}{\boldsymbol H}
\newcommand{\BI}{\boldsymbol I}
\newcommand{\BJ}{\boldsymbol J}
\newcommand{\BW}{\boldsymbol W}
\newcommand{\bd}{\boldsymbol d}
\newcommand{\be}{\boldsymbol e}
\newcommand{\bh}{\boldsymbol h}
\newcommand{\bj}{\boldsymbol j}
\newcommand{\bl}{\boldsymbol l}
\newcommand{\bn}{\boldsymbol n}
\newcommand{\bo}{\boldsymbol o}
\newcommand{\bp}{\boldsymbol p}
\newcommand{\bq}{\boldsymbol q}
\newcommand{\bu}{\boldsymbol u}
\newcommand{\bv}{\boldsymbol v}
\newcommand{\bw}{\boldsymbol w}
\newcommand{\bx}{\boldsymbol x}
\newcommand{\by}{\boldsymbol y}
\newcommand{\CF}{\mathcal F}
\newcommand{\CH}{\mathcal H}
\newcommand{\CP}{\mathcal P}
\newcommand{\CQ}{\mathcal Q}
\newcommand{\CR}{\mathcal R}
\newcommand{\CS}{\mathcal S}
\newcommand{\CT}{\mathcal T}
\newcommand{\LC}{\mathscr C}
\newcommand{\LD}{\mathscr D}
\newcommand{\LE}{\mathscr E}
\newcommand{\LL}{\mathscr L}
\newcommand{\LP}{\mathscr P}
\newcommand{\LV}{\mathscr V}
\newcommand{\TD}{\textup D}
\newcommand{\TS}{\textup S}
\newcommand{\TU}{\textup U}
\newcommand{\BCH}{\boldsymbol{\CH}}
\newcommand{\BCP}{\boldsymbol{\CP}}
\newcommand{\BLL}{\pmb{\LL}}
\newcommand{\BTD}{\pmb{\TD}}
\newcommand{\BTS}{\pmb{\TS}}
\newcommand{\BTU}{\pmb{\TU}}
\newcommand{\SlopeTriangle}[6]
{
    % #1. Relative offset in x direction.
    % #2. Width in x direction, so xA-xB.
    % #3. Relative offset in y direction.
    % #4. Slope d(y)/d(x).
    % #5. Label
    % #6. Plot options.

    \pgfplotsextra
    {
        \pgfkeysgetvalue{/pgfplots/xmin}{\xmin}
        \pgfkeysgetvalue{/pgfplots/xmax}{\xmax}
        \pgfkeysgetvalue{/pgfplots/ymin}{\ymin}
        \pgfkeysgetvalue{/pgfplots/ymax}{\ymax}

        % Calculate auxilliary quantities, in relative sense.
        \pgfmathsetmacro{\xArel}{#1}
        \pgfmathsetmacro{\yArel}{#3}
        \pgfmathsetmacro{\xBrel}{#1-#2}
        \pgfmathsetmacro{\yBrel}{\yArel}
        \pgfmathsetmacro{\xCrel}{\xArel}
        %\pgfmathsetmacro{\yCrel}{ln(\yC/exp(\ymin))/ln(exp(\ymax)/exp(\ymin))} % REPLACE THIS EXPRESSION WITH AN EXPRESSION INDEPENDENT OF \yC TO PREVENT THE 'DIMENSION TOO LARGE' ERROR.

        \pgfmathsetmacro{\lnxB}{\xmin*(1-(#1-#2))+\xmax*(#1-#2)} % in [xmin,xmax].
        \pgfmathsetmacro{\lnxA}{\xmin*(1-#1)+\xmax*#1} % in [xmin,xmax].
        \pgfmathsetmacro{\lnyA}{\ymin*(1-#3)+\ymax*#3} % in [ymin,ymax].
        \pgfmathsetmacro{\lnyC}{\lnyA+#4*(\lnxA-\lnxB)}
        \pgfmathsetmacro{\yCrel}{\lnyC-\ymin)/(\ymax-\ymin)} % THE IMPROVED EXPRESSION WITHOUT 'DIMENSION TOO LARGE' ERROR.

        % Define coordinates for \draw. MIND THE 'rel axis cs' as opposed to the 'axis cs'.
        \coordinate (A) at (rel axis cs:\xArel,\yArel);
        \coordinate (B) at (rel axis cs:\xBrel,\yBrel);
        \coordinate (C) at (rel axis cs:\xCrel,\yCrel);

        % Draw slope triangle.
        \draw[#6]   (A)-- node[anchor=north] {#5}
                    (B)--
                    (C)--
                    cycle;
    }
}
\newcommand{\eps}{\varepsilon}
\newcommand{\ee}{{\boldsymbol \eps}}
\newcommand{\mm}{{\boldsymbol \mu}}
\newcommand{\cc}{{\boldsymbol \chi}}
\newcommand{\zero}{\bo}
\newcommand{\norm}[1]{|\!|\!|#1|\!|\!|}
\newcommand{\enorm}[1]{\norm{#1}_{\ccurl,\omega,\Omega}}
\newcommand{\enormE}[2]{\norm{#1}_{\ccurl,\omega,\ee,\cc,#2}}
\newcommand{\enormH}[2]{\norm{#1}_{\ccurl,\omega,\mm,\bzet,#2}}
\newcommand{\enormEH}[2]{\norm{#1}_{\ccurl,\omega,#2}}
\newcommand{\lO}{\ell_\Omega}
\newcommand{\bthe}{\boldsymbol \theta}
\newcommand{\bphi}{\boldsymbol \phi}
\newcommand{\bpsi}{\boldsymbol \psi}
\newcommand{\bxi}{\boldsymbol \xi}
\newcommand{\bvthe}{\boldsymbol \vartheta}
\newcommand{\bzet}{\boldsymbol \zeta}
\newcommand{\tbe}{\widetilde \be}
\newcommand{\tbh}{\widetilde \bh}
\newcommand{\tbj}{\widetilde \bj}
\newcommand{\tbl}{\widetilde \bl}
\newcommand{\cR}{\LC_{\rm r}}
\newcommand{\cI}{\LC_{\rm i}}
\newcommand{\cb}{\LC_{\rm b}}
\newcommand{\cP}{\LC_{\rm pol}}
\newcommand{\cSE}{\gamma_{\rm st,E}}
\newcommand{\gbaE}{\gamma_{\rm ba,E}}
\newcommand{\cSH}{\gamma_{\rm st,H}}
\newcommand{\gbaH}{\gamma_{\rm ba,H}}
\newcommand{\cS}{\gamma_{\rm st}}
\newcommand{\gba}{\gamma_{\rm ba}}
\newcommand{\tK}{\widetilde K}
\newcommand{\tF}{\widetilde F}
\newcommand{\eemO}{\varepsilon_{\Omega,\min}}
\newcommand{\eeMO}{\varepsilon_{\Omega,\max}}
\newcommand{\mmmO}{\mu_{\Omega,\min}}
\newcommand{\mmMO}{\mu_{\Omega,\max}}
\newcommand{\eemK}{\varepsilon_{\tK,\min}}
\newcommand{\mmMK}{\mu_{\tK,\max}}
\newcommand{\eeMK}{\varepsilon_{\tK,\max}}
\newcommand{\mmmK}{\mu_{\tK,\min}}
\newcommand{\essinf}[1]{\underset{\substack{#1}}{\operatorname{ess} \operatorname{inf}}\;}
\newcommand{\esssup}[1]{\underset{\substack{#1}}{\operatorname{ess} \operatorname{sup}}\;}
\newcommand{\pol}{\mathcal{P}}
\newcommand{\tV}{\widetilde V}
\newcommand{\tBW}{\widetilde{\BW}}
\newcommand{\tCQ}{\widetilde \CQ}
\newcommand{\tCR}{\widetilde \CR}
\newcommand{\osc}{\operatorname{osc}}
\newcommand{\se}{b_{\rm E}}
\newcommand{\sh}{b_{\rm H}}
\newcommand{\Gi}{\Gamma_{\rm i}}
\newcommand{\bni}{\bn_{\rm i}}
\newcommand{\OTF}{\Omega_{\rm tf}}
\newcommand{\BEi}{\BE^{\rm inc}}
\newcommand{\BHi}{\BH^{\rm inc}}
\title[Frequency-explicit a posteriori error estimates for Maxwell's equations]{Frequency-explicit a posteriori error estimates for discontinuous Galerkin discretizations of Maxwell's equations}
\author{T. Chaumont-Frelet$^{\star,\dagger}$}
\author{P. Vega$^{\ddagger}$}
\address{\vspace{-.5cm}}
\address{\noindent \tiny \textup{$^\star$Inria, 2004 Route des Lucioles, 06902 Valbonne, France}}
\address{\noindent \tiny \textup{$^\dagger$Laboratoire J.A. Dieudonn\'e, Parc Valrose, 28 Avenue Valrose, 06108 Nice Cedex 02, 06000 Nice, France}}
\address{\noindent \tiny \textup{$^\ddagger$Instituto de Matem\'aticas, Pontificia Universidad Cat\'olica de Valpara\'{\i}so, Blanco Viel 596, Valpara\'{\i}so, Chile}}
\begin{document}

\maketitle

\begin{abstract}
We propose a new residual-based a posteriori error estimator for discontinuous
Galerkin discretizations of time-harmonic Maxwell's equations in first-order form.
We establish that the estimator is reliable and efficient, and the dependency of the
reliability and efficiency constants on the frequency is analyzed and discussed.
The proposed estimates generalize similar results previously obtained for the
Helmholtz equation and conforming finite element discretization of Maxwell's equations.
In addition, for the discontinuous Galerkin scheme considered here, we also show that
the proposed estimator is asymptotically constant-free for smooth solutions. We also present
two-dimensional numerical examples that highlight our key theoretical findings and
suggest that the proposed estimator is suited to drive $h$- and $hp$-adaptive iterative
refinements.

\vspace{.5cm}
\noindent
{\sc Key words.}
a posteriori error estimates, $hp$-adaptivity,
discontinuous Galerkin methods,
high-frequency problems,
Maxwell's equations
\end{abstract}

%%%   \begin{AMS}
%%%   65N15, 65N30, 78M10
%%%   \end{AMS}

\section{Introduction}

Time-harmonic Maxwell's equations are a central model in a variety
of applications involving electromagnetic fields \cite{dorf_2006a,griffiths_1999a,viquerat_2015a}.
%In complex settings frequently encountered in applications, Maxwell's equations cannot be analytically solved, and i
Maxwell's equations cannot be analytically solved in complex settings frequently encountered in
applications. Instead, numerical simulation tools based on
discretization methods are commonly employed in practice, including boundary element
\cite{sauter_schwab_2010a}, finite difference \cite{taflove_hagness_2005a}, finite element
\cite{chaumontfrelet_vega_2020,monk_2003a}, and discontinuous Galerkin
\cite{feng_lu_xu_2016a,hesthaven_warburton_2002a,li_lanteri_perrussel_2014a,viquerat_2015a}
methods. Irrespective of the discretization technique, there is a need to make the simulation as
efficient as possible, either to treat problems faster or to tackle larger problems.

In this work, we focus on discontinuous Galerkin (DG) discretizations of Maxwell's equations
in the first-order form, where we look for $\BE,\BH: \Omega \to \mathbb C^3$ such that
\begin{equation}
\label{eq_maxwell_strong_first_order}
\left \{
\begin{array}{rcll}
i\omega \ee \BE - \curl \BH &=& \BJ & \text{ in } \Omega,
\\
i\omega \mm \BH + \curl \BE &=& \zero& \text{ in } \Omega,
\\
\BE \times \bn &=& \zero & \text{ on } \partial \Omega,
\\
\mm \BH \cdot \bn &=& 0 & \text{ on } \partial \Omega,
\end{array}
\right .
\end{equation}
where $\ee,\mm: \Omega \to \mathbb C^{3 \times 3}$ are given coefficients
representing the electric permittivity and magnetic permeability of the materials contained
in $\Omega$, $\BJ: \Omega \to \mathbb C^3$ is the (known) current density,
and $\omega > 0$ is the frequency. As we elaborate below, this setting is rather general
and, in particular, unbounded propagation media fit the framework of
\eqref{eq_maxwell_strong_first_order} if the coefficients $\ee,\mm$ are suitably modified
using perfectly matched layers \cite{berenger_1994,berenger_1996,monk_2003a}.

An attractive solution to reduce computational costs is to use
a posteriori error estimators coupled with adaptive strategies
\cite{bespalov_haberl_praetorius_2017a,bonito_nochetto_2010a,verfurth_1994}.
In this framework, after a discrete solution $(\BE_h,\BH_h)$ has been computed on
a given mesh $\CT_h$, an error estimator $\eta_K$ is associated with each element
$K \in \CT_h$. These estimators can then be used to decide where to locally refine
the mesh, leading to an adaptive loop procedure. Besides, the cumulated estimator
$\eta^2 \eq \sum_{K \in \CT_h} \eta_K^2$ can be used to stop the adaptive loop
and, more generally, to assess the error level reliably.

In this context, DG methods are especially interesting, as they allow for
an easy implementation of hanging nodes and $p$-adaptivity \cite{bonito_nochetto_2010a}.
Another appealing aspect of these methods that we highlight in Section \ref{section_tfsf}
is their ability to impose prescribed jumps in the solution easily.
For wave propagation problems, this is especially useful to inject incident fields
\cite[\S4.2.2]{viquerat_2015a}.
Besides, DG methods are sometimes more stable than conforming finite elements on
coarse meshes (compare, e.g. \cite[Theorem 3.5]{feng_lu_xu_2016a} and
\cite[Lemma 3.7]{bernkopf_sauter_torres_veit_2022a}),
which may be relevant in adaptive processes starting with coarse discretizations.

\begin{subequations}
\label{eq_intro_estimator}
The key contribution of this work is the design of a new residual-based a posteriori error
estimator for DG discretizations of \eqref{eq_maxwell_strong_first_order}. Specifically, our
proposed estimator is reliable
\begin{equation}
\label{eq_intro_reliability}
\norm{(\BE-\BE_h,\BH-\BH_h)}_\Omega
\leq
C \!\left (
1 + \max_{K \in \CT_h} \frac{\omega h_K}{p_K \vartheta_K}
\right ) \!(1+\gba) \eta
\end{equation}
and locally efficient
\begin{equation}
\label{eq_intro_efficiency}
\eta_K
\leq
C p_K^{3/2}\!
\left (
1 + \frac{\omega h_K}{p_K \vartheta_K}
\right )\!
\norm{(\BE-\BE_h,\BH-\BH_h)}_{\widetilde K}
+
\osc_K,
\end{equation}
where $\norm{\cdot}$ is a suited ``energy norm'', $h_K$ and $p_K$ are the size and
the polynomial degree associated with the element $K \in \CT_h$, and $\vartheta_K$ is
a measure of the wavespeed around $K$. The real number $\gba > 0$ in
\eqref{eq_intro_reliability} is the so-called ``approximation factor''
\cite{chaumontfrelet_ern_vohralik_2019,chaumontfrelet_vega_2020,dorfler_sauter_2013a}.
It generally grows with the frequency but tends to zero as
$\max_{K \in \CT_h} h_K/p_K \to 0$, see Section \ref{section_approximation_factor} below
as well as \cite{chaumontfrelet_vega_2020,chaumontfrelet_vega_2021a} for more details.
The term $\osc_K$ in \eqref{eq_intro_efficiency} is a ``data oscillation term'' customary in
efficiency estimates. We refer the reader to Theorems \ref{rel_LDG} and
\ref{eff_LDG} below, where the estimates in \eqref{eq_intro_estimator} are established.
Interestingly, these results generalize similar findings for the Helmholtz equation
\cite{dorfler_sauter_2013a,sauter_zech_2015a} and conforming N\'ed\'elec approximations
of Maxwell's equations \cite{chaumontfrelet_vega_2020}.
\end{subequations}

On the analysis side, although the estimates we obtain for DG discretization
in \eqref{eq_intro_estimator} are rather similar to the ones established in
\cite{chaumontfrelet_vega_2020} for conforming N\'ed\'elec elements, the arguments
employed are fairly different. In particular, following
\cite{chaumontfrelet_vega_2020,dorfler_sauter_2013a}, our analysis relies
on duality arguments that turn out to be substantially complicated in the first-order
setting we consider here in \eqref{eq_maxwell_strong_first_order}, as can be seen in Lemma
\ref{lemma_approximation_factor_first_order} below. DG schemes also have
some unique properties as compared to N\'ed\'elec discretizations. For instance, we
also show that for smooth solutions (or locally refined meshes), our estimator is
asymptotically constant-free, meaning that
\begin{equation}
\label{eq_intro_exact}
\norm{(\BE-\BE_h,\BH-\BH_h)}_\Omega
\leq \left (1+\theta\left(\max_{K \in \CT_h} h_K/p_K\right)\!\right )\!\eta
\end{equation}
with $\lim_{t \to 0} \theta(t) = 0$.
This is detailed in Remark \ref{remark_asymptotic_exactness}. Another interesting
fact is that for divergence-free right-hand sides ($\div \BJ = 0$), the estimator
is ``oscillation free''.

We also present a series of detailed two-dimensional numerical experiments. On the one hand, we 
showcase a few academic benchmarks with known analytical solutions that we use to highlight
the main features of our results. In particular, the interplay between the frequency,
the mesh size and polynomial degree, and the constants appearing in the estimates
\eqref{eq_intro_reliability}, \eqref{eq_intro_efficiency}, and \eqref{eq_intro_exact} is
largely illustrated and thoroughly discussed. On the other hand, we also consider more
realistic benchmarks where the analytic solution is unavailable. In these cases, we couple
the estimator with D\"orfler's marking \cite{dorfler_1996a} and newest vertex bisection
\cite{binev_dahmen_devore_2004a} to drive adaptive mesh refinements. We also consider
a simple $hp$-adaptive procedure based on \cite{melenk_wohlmuth_2001a}. For these examples,
we observe optimal convergence rates in all cases, which indicates that the estimator is suited
to drive this kind of adaptive refinements.

The remainder of our work is organized as follows.
In Section \ref{sec_settings}, we precise the setting and main assumptions,
and recall useful standard tools. Section \ref{sec_IPDG} presents the range of DG
schemes for which our analysis applies and gives the construction of the
estimator. Section \ref{sec_IPDG} contains the proof of our main results.
Finally, we report on numerical examples in Section \ref{sec_numerics} before
providing some concluding remarks in Section \ref{sec_conclusion}.

\section{Preliminaries}
\label{sec_settings}

We start by providing key notations and preliminary results.

\subsection{Domain and coefficients}
\label{section_coefficients}

We consider Maxwell's equations \eqref{eq_maxwell_strong_first_order}
in a Lipschitz polyhedral (not necessarily simply connected) domain
$\Omega \subset \mathbb R^3$. We denote by $\lO \eq \sup_{\bx,\by \in \Omega} |\bx-\by|$
the diameter of $\Omega$.

The coefficients $\ee,\mm: \Omega \to \CS(\mathbb C^3)$ are
two symmetric (but not necessarily self-adjoint) tensor-valued
functions describing the electromagnetic properties of the materials
contained inside $\Omega$. For the sake of simplicity, we assume that
$\Omega$ can be partitioned into a set $\LP$ of non-overlapping polyhedral
subdomains $P$ such that $\ee|_P$ and $\mm|_P$ are constant for all $P \in \LP$.
The short-hand notations $\bzet \eq \ee^{-1}$ and $\cc \eq \mm^{-1}$ will also be useful.

If $\bphi$ is any of the tensor fields mentioned above, we introduce the notations
\begin{equation*}
\phi_{\min}(\bx)
\eq
\min_{\substack{\bu \in \mathbb C^{3} \\ |\bu| = 1}} \Re \bphi(\bx) \bu \cdot \overline{\bu},
\qquad
\phi_{\max}(\bx)
\eq
\max_{\substack{\bu \in \mathbb C^{3} \\ |\bu| = 1}}
\max_{\substack{\bv \in \mathbb C^{3} \\ |\bv| = 1}}
\Re \bphi(\bx) \bu \cdot \overline{\bv},
\end{equation*}
as well as
\begin{equation*}
\phi_{D,\min} \eq \essinf{\bx \in D} \phi_{\min}(\bx),
\qquad
\phi_{D,\max} \eq \esssup{\bx \in D} \phi_{\max}(\bx)
\end{equation*}
for any open set $D \subset \Omega$, and we assume that $\phi_{\Omega,\min} > 0$.
Finally, for $D \subset \Omega$, the notation
\begin{align*}
c_{D,\min}\eq(\varepsilon_{D,\max}\mu_{D,\max})^{-1/2},
\qquad
c_{D,\max}\eq(\varepsilon_{D,\min}\mu_{D,\min})^{-1/2}
\end{align*}
are employed for the smallest and highest wavespeeds in $D$.

As described in \cite[Remark 2.1]{chaumontfrelet_vega_2020}, these assumptions
cover most interesting application scenarios, including dissipative materials
and perfectly matched layers \cite{berenger_1994,berenger_1996,monk_2003a}.

\subsection{Functional spaces}
\label{functional_spaces}

In the following, if $D \subset \Omega$, $\LL^2(D)$ denotes the space of square-integrable
complex-valued function over $D$, see e.g. \cite{adams_fournier_2003a}, and
$\BLL^2(D) \eq \left (\LL^2(D) \right )^3$. We equip $\BLL^2(D)$ with the following
(equivalent) norms
\begin{equation*}
	\|\bw\|_D^2 \eq \int_D |\bw|^2,
	\qquad
	\|\bw\|_{\bphi,D}^2 \eq \Re \int_D \bphi \bw \cdot \overline{\bw},
	\qquad
	\bw \in \BLL^2(D)
\end{equation*}
where $\bphi \in \{\ee,\mm,\bzet,\cc\}$.
We denote by $(\cdot,\cdot)_D$ the inner-product of $\BLL^2(D)$,
and we drop the subscript when $D = \Omega$.
If $F \subset \overline{\Omega}$ is a two-dimensional measurable planar subset,
$\|\cdot\|_F$ and $\langle \cdot,\cdot \rangle_F$
respectively denote the natural norm and inner-product of
both $\LL^2(F)$ and $\BLL^2(F) \eq \left (\LL(F)\right )^3$.

Classically \cite{adams_fournier_2003a}, we employ the notation $\CH^1(D)$
for the usual Sobolev space of functions $w \in \LL^2(D)$ such that
$\grad w \in \BLL^2(D)$. We also set $\BCH^1(D) \eq \left (\CH^1(D)\right )^3$
and introduce the semi-norms
\begin{equation*}
	\|\grad \bw\|_D^2
	\eq
	\sum_{j,k = 1}^3 \int_D \left |\pd{\bw_j}{\bx_k}\right |^2,
	\qquad
	\|\grad \bw\|_{\bphi,D}^2
	\eq
	\sum_{j,k = 1}^3 \int_D \phi_{\max} \left |\pd{\bw_j}{\bx_k}\right |^2,
\end{equation*}
for $\bw \in \BCH^1(\Omega)$ and $\bphi \in \{\ee,\mm,\bzet,\cc\}$.

We shall also need Sobolev spaces of vector-valued functions with well-defined
divergence and rotation \cite{girault_raviart_1986a}. Specifically, we denote
by $\BCH(\ccurl,D)$ the space of functions $\bw \in \BLL^2(D)$
with $\curl \bw \in \BLL^2(D)$, that we equip with the norms
\begin{equation*}
	\norm{\bv}_{\ccurl,\omega,\bphi,\bpsi,D}^2
	\eq
	\omega^2 \|\bv\|_{\bphi,D}^2 + \|\curl \bv\|_{\bpsi,D}^2,
	\qquad \bv \in \BCH(\ccurl,D),
\end{equation*}
for $\bphi,\bpsi \in \{\ee,\mm,\bzet,\cc\}$. We also introduce the following
``energy'' norm
\begin{equation*}
	\enormEH{(\be,\bh)}{D}^2
	\eq
	\enormE{\be}{D}^2 + \enormH{\bh}{D}^2,
	\qquad
	(\be,\bh) \in \BCH_0(\ccurl,D) \times \BCH(\ccurl,D),
\end{equation*}
for electromagnetic fields on the product space.

In addition, if $\bxi: D \to \mathbb C^3$ is a measurable tensor-valued
function, we will use the notation $\BCH(\ddiv,\bxi,D)$ for the
set of functions $\bw \in \BLL^2(D)$ with $\div (\bxi \bw) \in L^2(D)$,
and we will write $\BCH(\ddiv^0,\bxi,D)$ for the set of fields
$\bw \in \BCH(\ddiv,\bxi,D)$ such that $\div (\bxi \bw) = 0$
in $D$. When $\bxi = \BI$, the identity tensor,
we simply write $\BCH(\ddiv,D)$ and $\BCH(\ddiv^0,D)$.

For any of the aforementioned spaces $\LV$, the notation
$\LV_0$ denotes the closure of smooth, compactly supported functions
into $\LL^2(D)$ (or $\BLL^2(D))$ with respect to the norm of
$\LV$. These spaces also correspond to the kernel of the naturally associated
trace operators \cite{adams_fournier_2003a,girault_raviart_1986a}.

Finally, if $\LD$ is a collection of disjoint sets $D \subset \Omega$
and $\LV(D)$ is any of the aforementioned spaces, $\LV(\LD)$ stands for the
``broken'' space of functions in $\bv \in \BLL^2(\Omega)$ (or $\LL^2(\Omega)$)
such that $\bv|_D \in \LV(D)$ for all $D \in \LD$. We employ the same
notation for the inner-products, norms, and semi-norms of $\LV(\LD)$ and $\LV(D)$,
with the subscript $\LD$ instead of $D$.

\subsection{TF-SF formulation}
\label{section_tfsf}

We consider a TF-SF interface $\Gi$ that is either empty (there are no incident fields)
or the boundary of a Lipschitz polyhedral subdomain $\OTF \subset \Omega$.

We then consider incident fields
$\BEi,\BHi \in \BCH(\ccurl,\OTF) \cap \BCH(\ddiv,\OTF) \cap \BLL^2(\Gi)$ such that
\begin{equation}
\label{eq_assumption_incident_fields}
\left \{
\begin{array}{rcl}
i\omega \varepsilon_0 \BEi-\curl \BHi &=& \bo
\\
i\omega \mu_0 \BHi+\curl \BEi &=& \bo
\end{array}
\right .
\end{equation}
in $\OTF$, where $\varepsilon_0,\mu_0 > 0$ are arbitrary values that typically correspond
to the vacuum electric permittivity and magnetic permeability in applications. In addition,
for the sake of simplicity, we assume that $\ee = \varepsilon_0 \BI$ and $\mm = \mu_0 \BI$
in a neighborhood of $\Gi$.

The TF-SF interface $\Gi$ is then employed to inject the incident fields $\BEi$ and $\BHi$
into the computational domain via jump conditions, see e.g. \cite[\S4.2.2]{viquerat_2015a}.
	
\subsection{Variational formulations}

In the remainder of this work, we assume that $\BJ \in \BCH(\ddiv,\Omega)$. Then,
we may recast \eqref{eq_maxwell_strong_first_order} into a weak formulation, which consists in
finding a pair $(\BE,\BH)\in \BLL^2(\Omega) \times \BLL^2(\Omega)$ satisfying
\begin{equation}
\label{eq_maxwell_weak_first_order}
b((\BE,\BH),(\bv,\bw)) = i\omega (\BJ,\bv) + i\omega \ell(\bv,\bw)
\qquad
\forall (\bv,\bw) \in \BCH_0(\ccurl,\Omega) \times \BCH(\ccurl,\Omega)
\end{equation}
where
\begin{equation*}
b((\be,\bh),(\bv,\bw))
\eq
-\omega^2\left(\mm\bh,\bw\right)
+
i\omega\left(\be,\curl\bw\right)
-
i\omega\left(\bh,\curl\bv\right)
-
\omega^2\left(\ee\be,\bv\right)
\end{equation*}
and
\begin{equation*}
\ell(\bv,\bw)
\eq
\langle \BHi,\bv \times \bn_{\rm i} \rangle_{\Gi}
-
\langle \BEi,\bw \times \bn_{\rm i} \rangle_{\Gi},
\end{equation*}
for all $\be,\bv \in \BLL^2(\Omega)$ and
$(\bv,\bw) \in \BCH_0(\ccurl,\Omega) \times \BCH(\ccurl,\Omega)$.
Notice that the duality pairing in the definition of $\ell$
is well-defined since $\BEi,\BHi \in \BCH(\ccurl,\OTF)$ if we
understand $\langle\cdot,\cdot\rangle_{\Gi}$ as the duality pairing
introduced in \cite{buffa_ciarlet_2001a}.

The variational formulations associated with 
second-order forms of Maxwell's equations will also be useful.
As a result, we introduce
\begin{equation*}
\se(\be,\bv) \eq -\omega^2 (\ee\be,\bv) + (\cc\curl\be,\curl\bv)
\qquad
\forall \be,\bv \in \BCH_0(\ccurl,\Omega)
\end{equation*}
and
\begin{equation*}
\sh(\bh,\bw) \eq
-\omega^2 (\mm\bh,\bw) + (\bzet \curl \bh,\curl \bw)
\qquad
\forall \bh,\bw \in \BCH(\ccurl,\Omega),
\end{equation*}
and observe that the G\aa rding inequalities
\begin{subequations}
\label{eq_garding_inequalities}
\begin{equation}
\label{eq_garding_inequality_E}
\enormE{\be}{\Omega}^2 = \Re\se(\be,\be) + 2\omega^2\|\be\|_{\ee,\Omega}^2
\qquad \forall \be \in \BCH_0(\ccurl,\Omega)
\end{equation}
and
\begin{equation}
\label{eq_garding_inequality_H}
\enormH{\bh}{\Omega}^2 = \Re\sh(\bh,\bh) + 2\omega^2\|\bh\|_{\mm,\Omega}^2
\qquad
\forall \bh \in \BCH(\ccurl,\Omega)
\end{equation}
are satisfied.
\end{subequations}
	
\subsection{Computational mesh}
\label{section_mesh}

The computational mesh $\CT_h$ is a partition of $\Omega$ into non-overlapping
(closed) simplicial elements $K$. We denote by $\CF_h^{\rm e}$ the set of exterior
faces lying on the boundary $\partial \Omega$, by $\CF_h^{\rm i}$ the remaining
(interior) faces, and set $\CF_h \eq \CF_h^{\rm e} \cup \CF_h^{\rm i}$. We associate
with each face $F \in \CF_h^{\rm i}$ a unit normal vector $\bn_F$ whose orientation
is arbitrary but fixed. For $F \in \CF_h^{\rm e}$, $\bn_F \eq \bn$ is
the outward unit vector normal to $\partial \Omega$.
For $K \in \CT_h$, $\CF_K \subset \CF_h$ denotes the faces of $K$. The notations
\begin{equation*}
h_K \eq \sup_{\bx,\by \in K} |\bx-\by|,
\qquad
\rho_K \eq \sup \left \{ r > 0 \; | \; \exists \bx \in K: \; B(\bx,r) \subset K \right \},
\end{equation*}
stand for the diameter of $K$ and the radius of the largest ball contained in $\overline{K}$,
and $\beta_K \eq h_K/\rho_K$ is its shape-regularity parameter. The (global) mesh size, and
shape-regularity parameters are respectively defined as $h \eq \max_{K \in \CT_h} h_K$
and $\beta \eq \max_{K \in \CT_h} \beta_K$.

We assume that $\CT_h$ is conforming in the sense of \cite{ciarlet_2002a}, that is, the
intersection $\overline{K_+} \cap \overline{K_-}$ of two distinct elements
$K_\pm \in \CT_h$ is either empty, or a single vertex, edge, or face of both $K_-$ and $K_+$. 
We further require that the mesh $\CT_h$ is conforming with the physical
partition $\LP$. Namely, we assume that for each $K \in \CT_h$, there exists
$P \in \LP$ such that $K \subset P$, which ensures that coefficients $\ee$ and $\mm$
are constant in each element.

\begin{remark}[Hanging nodes]
Discontinuous Galerkin methods allow hanging nodes that
violate the above assumption and can be especially beneficial in mesh adaptivity techniques \cite{bonito_nochetto_2010a}.
We believe that the present analysis could extend to meshes featuring hanging nodes,
but at the price of increased technicalities in the definition of the quasi-interpolation
operators described in Section \ref{section_quasi_interpolation} below.
\end{remark}

We follow the standard convention for jumps and averages of functions. Namely,
if $\bv \in \BCH^1(\CT_h)$, we define
\begin{equation*}
\jmp{\bv}|_F \eq \bv_+ (\bn_+ \cdot \bn_F) + \bv_- (\bn_- \cdot \bn_F),
\qquad
\avg{\bv}|_F \eq \frac{1}{2}(\bv_+ + \bv_-),
\end{equation*}
for all interior faces $F \eq \partial K_- \cap \partial K_+ \in \CF_h^{\rm i}$ with
$\bv_\pm$ the trace of $\bv$ on $F$ from the interior of $K_{\pm}$ and $\bn_\pm$ the
unit normal pointing outward of $K_\pm$. For exterior faces $F \in \CF_h^{\rm e}$,
the definitions $\jmp{\bv}|_F \eq \avg{\bv}|_F \eq \bv|_F$ are convenient.

If $K \in \CT_h$ and $F \in \CF_h$, the associated mesh patches are defined by
\begin{equation*}
\CT_{K,h}
\eq
\left \{
K' \in \CT_h \; | \; \overline{K} \cap \overline{K'} \neq \emptyset
\right \},
\quad
\CT_{F,h}
\eq
\left \{
K' \in \CT_h \; | \;
F \subset \partial K'
\right \},
\end{equation*}
and we respectively use the notations $\tK$ and $\tF$ for the open domain covered
by the elements of $\CT_{K,h}$ and $\CT_{F,h}$.

For collections of elements and faces $\CT \subset \CT_h$ and $\CF \subset \CF_h$,
the following broken inner-product will be useful:
\begin{equation*}
(\cdot,\cdot)_\CT
\eq
\sum_{K \in \CT} (\cdot,\cdot)_K,
\quad
\langle \cdot,\cdot \rangle_{\partial \CT}
\eq
\sum_{K \in \CT} \langle \cdot,\cdot \rangle_{\partial K},
\quad
\langle \cdot,\cdot \rangle_{\CF}
\eq
\sum_{F \in \CF} \langle \cdot,\cdot \rangle_{F}.
\end{equation*}

\subsection{Polynomial spaces}

In the following, for all $K \in \CT_h$ and $q \geq 0$, $\CP_q(K)$ stands for the space of
(complex-valued) polynomials defined over $K$ and $\BCP_q(K) \eq \left (\CP_q(K)\right )^3$. 
If $\CT \subset \CT_h$ and $\bq \eq \{q_K\}_{K\in\CT}$, then $\CP_{\bq}(\CT)$ and
$\BCP_{\bq}(\CT)$ respectively stand for the space of functions that are piecewise in
$\CP_{q_K}(K)$ and $\BCP_{q_K}(K)$ for all $K \in \CT$. 

In the remaining, we associated with each element $K \in \CT_h$ a polynomial degree $p_K \geq 1$,
and we set $\bp \eq \{ p_K \}_{K \in \CT_h}$. For the sake of simplicity, we will assume
that there exists a constant $\cP > 0$ such that
\begin{equation}
\label{eq:quasiunif}
\cP^{-1} p_K\leq p_{K'}\leq \cP p_K
\end{equation}
for all neighboring elements $K$ and $K'$ in $\CT_h$. We also set $p_F \eq \max(p_K,p_{K'})$
for $F \eq \partial K_- \cap \partial K_+ \in \CF_h^{\rm i}$ and $p_F\eq p_K$
for exterior faces $F \in \CF_h^{\rm e}\cap\CF_K$. Then,
\begin{equation*}
  V_h \eq \CP_{\bp}(\CT_h) \cap H^1_0(\Omega), \qquad
\tV_h \eq \CP_{\bp}(\CT_h) \cap H^1(\Omega)
\end{equation*}
are the usual Lagrange finite element spaces (with and without essential boundary conditions) and
\begin{equation*}
	\BW_h \eq \BCP_{\bp}(\CT_h) \cap \BCH_0(\ccurl,\Omega), \qquad
	\tBW_h \eq \BCP_{\bp}(\CT_h) \cap \BCH(\ccurl,\Omega)
\end{equation*}
are the usual second-family of N\'ed\'elec spaces \cite{nedelec_1986a}.

\subsection{Quasi-interpolation}
\label{section_quasi_interpolation}

There exists two operators $\CQ_h: H^1_0(\Omega) \to V_h$ and
$\CR_h: \BCH_0(\ccurl,\Omega) \to \BW_h$ and a constant $\cI$ that
only depends on $\beta$ such that
\begin{equation}
\label{eq_quasi_interpolation_H1}
\frac{p_K}{h_K}\|w-\CQ_hw\|_K
+
\sqrt{\frac{p_K}{h_K}}\|w-\CQ_hw\|_{\partial K}
\leq
\cI
\|\grad w\|_{\tK}
\end{equation}
for all $w \in H^1_0(\Omega)$ and
\begin{equation}
\label{eq_quasi_interpolation_Hc}
\frac{p_K}{h_K}\|\bw-\CR_h\bw\|_K
+
\sqrt{\frac{p_K}{h_K}}\|(\bw-\CR_h\bw) \times \bn\|_{\partial K}
\leq
\cI
\|\grad \bw\|_{\tK}
\end{equation}
for all $\bw \in \BCH^1(\CT_h) \cap \BCH_0(\ccurl,\Omega)$.
We will also use quasi-interpolation operators that operate on spaces without essential boundary
conditions, namely, $\tCQ_h: H^1(\Omega) \to \tV_h$ and $\tCR_h: \BCH(\ccurl,\Omega) \to \tBW_h$.
These operators also satisfy \eqref{eq_quasi_interpolation_H1} and \eqref{eq_quasi_interpolation_Hc}
for all $w \in H^1(\Omega)$ and $\bw \in \BCH^1(\CT_h) \cap \BCH(\ccurl,\Omega)$.
We refer the reader to, e.g, \cite{hiptmair_pechstein_2019,melenk_2005a} for
the construction of $\CQ_h$ and $\tCQ_h$. $\CR_h$ and $\tCR_h$ are then respectively
defined by using $\CQ_h$ and $\tCQ_h$ componentwise.

\subsection{Bubble functions and inverse inequalities}
\label{bubbles}

\begin{subequations}
Bubble functions constitute a standard tool that we will use to prove efficiency
estimates \cite{dorfler_sauter_2013a,melenk_wohlmuth_2001a}. For all elements $K \in \CT_h$
and faces $F \in \CF_h$, there exists ``bubble'' functions
$b_K\in C^0(\overline{K})$ and $b_F\in C^0(\overline{F})$ such that the following holds.%
\footnote{The results in \cite{melenk_wohlmuth_2001a} are rigorously stated for the
two-dimensional case. However, as observed in \cite[Theorem 4.12]{dorfler_sauter_2013a},
these results naturally extend to the three-dimensional case.}
We have \cite[Lemma 2.5]{melenk_wohlmuth_2001a}
\begin{equation}
\label{eq_norm_bubble}
\|w\|_K \leq \cb p_K\|b_K^{1/2} w\|_K, \qquad \|v\|_F \leq \cb  p_F \|b_F^{1/2} v\|_F
\end{equation}
for all $w \in \CP_p(K)$ and $v \in \CP_p(F)$. Here, $\cb>0$ is a constant depending on the shape regularity parameter $\beta$. Besides, \cite[Lemma 2.5]{melenk_wohlmuth_2001a}
shows that
\begin{equation}
\label{eq_inv_bubble}
\|\grad(b_K w)\|_K \leq \cb \frac{p_K}{h_K}\|b_K^{1/2}w\|_K
\qquad
\forall w \in \CP_p(K).
\end{equation}
Finally, \cite[Lemma 2.6]{melenk_wohlmuth_2001a} guarantees the existence of an extension operator
$\LE: \pol_p(F) \to H^1_0(\tF)$ such that $\LE(v)|_F= b_F v$ and
\begin{equation}
	\label{eq_ext_bubble}
	p_K h_K^{-1/2}\|\LE(v)\|_{\CT_{F,h}} + p_K^{-1}h_K^{1/2}\|\grad \LE(v)\|_{\CT_{F,h}}
	\leq
	\cb \|b_F^{1/2}v\|_F \qquad \forall v \in \CP_p(F).
\end{equation}
\end{subequations}
Identical results hold for vector-valued functions,
applying the above estimates componentwise.

\subsection{Data oscillation}
\label{section_oscillation}

Our estimates include a ``data oscillation'' term which is standard in a posteriori
error estimation. Perhaps surprisingly, this oscillation term only involves the divergence
of the right-hand side $\BJ$, and not its actual values. In particular, there are
no oscillation terms in the common case where $\div \BJ = 0$ and $\BEi = \BHi = \bo$.
We thus set
\begin{align*}
\varrho_h
&\eq
\arg \min_{q_h \in \CP_{\bp}(\CT_h)} \|\div \BJ-q_h\|_\Omega,
\\
\BE^{\rm inc}_h
&\eq
\arg \min_{\bv_h \in \BCP_{\bp}(\CF_h^{\rm i})}
\|\BE^{\rm inc}-\bv_h\|_{\Gi},
\\
\BH^{\rm inc}_h
&\eq
\arg \min_{\bw_h \in \BCP_{\bp}(\CF_h^{\rm i})}
\|\BH^{\rm inc}-\bw_h\|_{\Gi},
\end{align*}
and
\begin{align*}
\osc_K
\eq
\osc_K(\BJ)+\osc_K(\BE^{\rm inc})+\osc_K(\BH^{\rm inc}),
\qquad
\osc_\CT^2 \eq \sum_{K \in \CT} \osc_K
\end{align*}
for all $K \in \CT_h$ and $\CT \subset \CT_h$, where
\begin{align*}
\osc_K(\BJ)
&\eq
p^{3/2}_K \frac{1}{\sqrt{\eps_{\tK,\min}}}
\frac{h_K}{p_K} \|\div \BJ-\varrho_h\|_K,
\\
\osc_K(\BE^{\rm inc})
&\eq
p_K^2\sqrt{\varepsilon_{\tK,\max}}\omega
\sqrt{\frac{h_K}{p_K}}\|\BE^{\rm inc}-\BE_h^{\rm inc}\|_{\partial K\cap\Gi},
\\
\osc_K(\BH^{\rm inc})
&\eq
p_K^2\sqrt{\mu_{\tK,\max}}\omega
\sqrt{\frac{h_K}{p_K}}\|\BHi-\BH_h^{\rm inc}\|_{\partial K\cap\Gi}.
\end{align*}
	
\subsection{Regular decomposition}
\label{sec_gradient_extraction}

Regular decompositions play an essential role in the derivation of reliability
estimates for a posteriori error estimators in the context of $\BCH_0(\ccurl,\Omega)$
problems
\cite{%
beck_hiptmair_hoppe_wohlmuth_2000a,%
chen_xu_zou_2011,%
demlow_hirani_2014,%
nicaise_creuse_2003a,%
schoberl_2008},
and we refer the reader to 
\cite{costabel_dauge_nicaise_1999a,girault_raviart_1986a,hiptmair_pechstein_2019}
for a thorough discussion of this topic.

\begin{subequations}
\label{eq_regularity_estimate}
The results we need follow from \cite[Theorem 2.1]{hiptmair_pechstein_2019} and read as follow:
for all $\bthe \in \BCH_0(\ccurl,\Omega)$, there exist $\bphi \in \BCH^1_0(\Omega)$ and
$r \in \CH^1_0(\Omega)$ such that $\bthe = \bphi + \grad r$ with
\begin{equation}
\|\grad \bphi\|_{\cc,\Omega}
\leq
\cR
\enormE{\bthe}{\Omega},
\qquad
\|\grad r\|_{\ee,\Omega} \leq \cR \|\bthe\|_{\ee,\Omega},
\end{equation}
and similarly, for all $\bvthe \in \BCH(\ccurl,\Omega)$ there exist
$\bpsi \in \BCH^1(\Omega)$ and $s \in \CH^1(\Omega)$ such that $\bvthe = \bpsi + \grad s$ with
\begin{equation}
\|\grad \bpsi\|_{\bzet,\Omega}
\leq
\cR
\enormH{\bvthe}{\Omega},
\qquad
\|\grad s\|_{\mm,\Omega} \leq \cR \|\bvthe\|_{\mm,\Omega}.
\end{equation}
\end{subequations}

In \eqref{eq_regularity_estimate}, $\cR$ is a constant possibly depending on the geometry
of $\Omega$, and, since the result in \cite{hiptmair_pechstein_2019} is enunciated in
non-weighted norms, the material contrasts $\eeMO/\eemO$ and $\mmMO/\mmmO$. In addition,
it may also depend on $(\omega\ell_\Omega/c_{\Omega,{\rm max}})^{-1}$ if the domain is not
simply-connected, but the constant can only blow up in the low-frequency regime and
remains well-behaved in the high-frequency regime on which we focus here (see the
discussion in \cite{chaumontfrelet_vega_2020}).

\subsection{Well-posedness}
% \label{well_posedness}

We will work under the assumption that the (adjoint) problem under
consideration is well-posed for the fixed frequency $ \omega $.

\begin{assumption}[Well-posedness]
\label{assumption_well_posedness}
For all $\bj \in \BLL^2(\Omega)$, there exists a unique
$\be^\star(\bj) \in \BCH_0(\ccurl,\Omega)$ such that
\begin{equation}
\label{eq_def_be_bj}
\se(\bw,\be^\star(\bj))
=
\omega(\bw,\ee\bj)
\qquad
\forall \bw \in \BCH_0(\ccurl,\Omega).
\end{equation}
\end{assumption}

Classically, we can infer from Assumption \ref{assumption_well_posedness}
that the first-order variational formulation is well-posed (see, e.g.
\cite[Lemma 3.1]{chaumontfrelet_2019a}, or the proof of Lemma \ref{lemma_aubin_nitsche}
below). Similarly, for all $\bl \in \BLL^2(\Omega)$, there exists a unique element
$\bh^\star(\bl) \in \BCH(\ccurl,\Omega)$ such that
\begin{equation}
	\label{eq_def_bh_bl}
	\sh(\bw,\bh^\star(\bl)) = \omega (\bw,\mm\bl)
	\qquad
	\forall \bw \in \BCH(\ccurl.\Omega).
\end{equation}
Thus, we can introduce the notations
\begin{equation}
	\label{eq_cS}
	\cSE
	\eq
	\sup_{\substack{\bj \in \BCH(\ddiv^0,\ee,\Omega) \\ \|\bj\|_{\ee,\Omega} = 1}}
	\enormE{\be^\star(\bj)}{\Omega},
	\qquad
	\cSH
	\eq
	\sup_{\substack{\bl \in \BCH_0(\ddiv^0,\mm,\Omega) \\ \|\bl\|_{\mm,\Omega} = 1}}
	\enormH{\bh^\star(\bl)}{\Omega},
\end{equation}
and set $\cS \eq \cSE + \cSH$.

\subsection{Approximation factors}
\label{section_approximation_factor}

As is now standard for high-frequency wave propagation problems
\cite{%
chaumontfrelet_ern_vohralik_2019,%
chaumontfrelet_vega_2020,%
dorfler_sauter_2013a,%
sauter_zech_2015a},
our analysis will rely on some ``approximation factor'' that is employed in the context of a
duality argument. Because we study the problem in first-order form, the definition is slightly
different than the one proposed earlier for problems in second-order form.

\begin{subequations}
\label{eq_gba}
We start by introducing two approximation factors that respectively describe
the ability of the discrete spaces $\BW_h$ and $\tBW_h$ to approximate
solutions to \eqref{eq_def_be_bj} and \eqref{eq_def_bh_bl} and are defined by
\begin{equation}
\label{eq_gba_E}
\gbaE \eq
\sup_{\substack{\bj \in \BCH(\ddiv^0,\ee,\Omega) \\ \|\bj\|_{\ee,\Omega} = 1}}
\inf_{\be_h \in \BW_h} \enormE{\be^\star(\bj)-\be_h}{\Omega}
\end{equation}
and
\begin{equation}
\label{eq_gba_H}
\gbaH \eq
\sup_{\substack{\bl \in \BCH_0(\ddiv^0,\mm,\Omega) \\ \|\bl\|_{\mm,\Omega} = 1}}
\inf_{\bh_h \in \tBW_h} \enormH{\bh^\star(\bl)-\bh_h}{\Omega},
\end{equation}
and we set $\gba \eq \gbaE + \gbaH$.
\end{subequations}

Notice that $\gba \leq \cS$, showing that the approximation factor is controlled independently
of the mesh size $h$ and the approximation orders $\bp$. It does, however, in general, depend
on the wavenumber $\omega \ell_\Omega/c_{\Omega,\min}$, the geometry of $\Omega$, and
the coefficients $\ee$ and $\mm$. Besides, since our definition of $\gba$ immediately hinges
on the ``standard'' approximation factors $\gbaE$ and $\gbaH$, it automatically recovers their
key properties. In particular
\begin{equation}
\label{eq_gbaE_gbaH_implicit}
\gba \leq C(\omega,\Omega,\ee,\mm,s) \left (\frac{h}{p}\right )^s,
\end{equation}
for some $s \in (0,1]$, where $p \eq \min_{K \in \CT_h}p_K$. Sharper estimates
are available, and we refer the reader to
\cite{%
chaumontfrelet_nicaise_2019a,%
chaumontfrelet_vega_2021a,%
melenk_sauter_2011a,%
melenk_sauter_2021a,%
melenk_sauter_2022,%
nicaise_tomezyk_2019a},
for an in-depth discussion on the dependence of $\gba$ on $\omega$, $h$, and $p$.

\subsection{Coefficient contrasts}

For $K \in \CT_h$ and $\bphi \in \{\ee,\mm,\bzet,\cc\}$, we employ
the notation
\begin{equation*}
\LC_{{\rm c},\bphi,K}
\eq
\frac{\max_{\tK} \phi_{\rm max}}{\min_{\tK} \phi_{\rm min}}
\end{equation*}
for the ``contrast'' of the coefficient $\bphi$ in the patch $\CT_{K,h}$.
We also set $\LC_{{\rm c},\bphi} \eq \max_{K \in \CT_h} \LC_{{\rm c},\bphi,K}$
and $\LC_{{\rm c}} \eq \max_{\bphi \in \{\ee,\mm,\bzet,\cc\}} \LC_{{\rm c},\bphi}$.
Notice that these quantities are actually independent of the mesh $\CT_h$, as long as
it fits the partition $\LP$ and is only affected by the definition of the coefficients
$\ee$, $\mm$.

\subsection{Notation for generic constants}
% \label{notation}

In the remaining of this document, if $A,B \geq 0$ are two positive real values,
we employ the notation $A \lesssim B$ if there exists a constant $C$ that only
depends on $\cI$, $\cP$, $\cb$, $\cR$, and $\LC_{{\rm c}}$
such that $A \leq CB$. Importantly, $C$ is independent of $\omega$, $h$ and $p$.
However, $C$ may depend on $\Omega$ through $\cR$, and it may also depend on $\beta$
through $\cI$ and $\cb$. We also employ the notation $A \gtrsim B$ if $B \lesssim A$
and $ A \sim B $ if $A \lesssim B$ and $A \gtrsim B$.
	
\section{DG discretization and a posteriori error estimator}
\label{sec_IPDG}

\subsection{Numerical scheme}

Following
\cite{arnold_brezzi_cockburn_marini_2002a,li_lanteri_perrussel_2014a,perugia_schotzau_2003},
the discrete problem consists in finding
$(\BE_h,\BH_h) \in \BCP_{k+1}(\CT_h)\times\BCP_{k+1}(\CT_h)$
such that
\begin{equation}
\label{eq_LDG}
b_h((\BE_h,\BH_h),(\bv_h,\bw_h)) = i\omega(\BJ,\bv_h) + \ell_h(\bv_h,\bw_h)
\end{equation}
for all $\bv_h, \bw_h \in \BCP_{k+1}(\CT_h)$, where
\begin{equation*}
b_h((\BE_h,\BH_h),(\bv_h,\bw_h))
\eq
b((\BE_h,\BH_h),(\bv_h,\bw_h)) + \beta_h((\BE_h,\BH_h),(\bv_h,\bw_h)),
\end{equation*}
$\beta_h$ is a sesquilinear form over $\BCP_{k+1}(\CT_h) \times \BCP_{k+1}(\CT_h)$
that we call the ``flux'' form and $\ell_h$ is an antilinear form over
$\BCP_{k+1}(\CT_h) \times \BCP_{k+1}(\CT_h)$ designed to impose the jump conditions.
We assume that if $\be_h,\bh_h,\bv_h^\dagger,\bw_h^\dagger \in \BCP_{k+1}(\CT_h)$, then
\begin{equation}
\label{eq_betah}
\beta_h((\be_h,\bh_h),(\bv_h^\dagger,\bw_h^\dagger)) = 0,
\qquad
\ell_h(\bv_h^\dagger,\bw_h^\dagger) = \ell(\bv_h^\dagger,\bw_h^\dagger),
\end{equation}
whenever
$\bv_h^\dagger \in \BCH_0(\ccurl,\Omega)$ and $\bw_h^\dagger \in \BCH(\ccurl,\Omega)$.
Essentially, we ask for the flux form to vanish for conforming test functions.

In practice, the sesquilinear form $\beta_h(\cdot,\cdot)$ is only
employed with discrete arguments to assemble the linear system
associated with \eqref{eq_LDG}. However, in the context of abstract
mathematical analysis, it is very convenient to extend the domain of definition
of $\beta_h(\cdot,\cdot)$ and to apply it to ``continuous'' arguments
as well. To simplify the discussion, we employ the notation
$\BTU \eq \BCH_0(\ccurl,\Omega) \times \BCH(\ccurl,\Omega)$ for the
``energy'' space of ``continuous'' functions,
$\BTD_h \eq \BCP_{k+1}(\CT_h) \times \BCP_{k+1}(\CT_h)$ for the
``discrete'' space of piecewise polynomial functions, and
$\BTS_h \eq \BTU + \BTD_h$. Because of assumption \eqref{eq_betah},
we can consistently extend the domain of definition of $\beta_h(\cdot,\cdot)$
to $\BTD_h \times \BTD_h + \BTS_h \times \BTU$ by simply setting
\begin{equation}
\label{eq_betah_ext}
\beta_h((\be,\bh),(\bv^\dagger,\bw^\dagger)) = 0,
\qquad
\ell_h(\bv^\dagger,\bw^\dagger)
=
\ell(\bv^\dagger,\bw^\dagger),
\end{equation}
for all $(\be,\bh) \in \BTS_h$ and $(\bv^\dagger,\bw^\dagger) \in \BTU$.

Then, an immediate consequence of \eqref{eq_betah_ext} is that the discrete
form is consistent in the sense that
\begin{equation}
\label{consistency}
b_h((\be,\bh),(\bv,\bw))=b((\be,\bh),(\bv,\bw))
\end{equation}
for all $(\be,\bh) \in \BTS_h$, and $(\bv,\bw) \in \BTU$.
In particular, observing that $\BW_h \times \tBW_h \subset \BTU$,
the Galerkin orthogonality property
\begin{equation}
\label{eq_galerkin_orthogonality_first_order}
b_h((\BE-\BE_h,\BH-\BH_h),(\bv_h^\dagger,\bw_h^\dagger))=0
\end{equation}
holds true for all $\bv_h^\dagger \in \BW_h $ and $\bw_h^\dagger \in \tBW_h$,
assuming $(\BE_h,\BH_h)$ solves \eqref{eq_LDG}.
	
\subsection{Examples of flux form}

After formally multiplying \eqref{eq_maxwell_strong_first_order}
by tests function $i\omega \overline{\bv}_h, i\omega \overline{\bw}_h \in \BCP_{k+1}(\CT_h)$
and integrating by parts locally in each element $ K $, one obtains
the formulation
\begin{equation}
\label{tmp_DG_formulation}
b((\BE_h,\BH_h),(\bv_h,\bw_h))
-i\omega
\sum_{K \in \CT_h}
\int_{\partial K}
\BH_h \cdot \overline{\bv}_h \times \bn_K
+
i\omega
\sum_{K \in \CT_h}
\int_{\partial K}
\BE_h \cdot \overline{\bw}_h \times \bn_K
=
i\omega (\BJ,\bv_h).
\end{equation}
Obviously, \eqref{tmp_DG_formulation} is not a satisfactory discrete formulation
since no communication between separate mesh elements occurs, all the
considered functions being discontinuous. Following \cite{arnold_brezzi_cockburn_marini_2002a},
the solution consists in replacing the traces of $\BE_h$ and $\BH_h$ by numerical
fluxes $\BE_h^\star$ and $\BH_h^\star$, computed from $\BE_h$ and $\BH_h$,
leading to
\begin{equation}
\label{tmp_DG_formulation_2}
b((\BE_h,\BH_h),(\bv_h,\bw_h))
-i\omega
\sum_{K \in \CT_h}
\int_{\partial K}
\BH_h^\star \cdot \overline{\bv}_h \times \bn_K
+i\omega
\sum_{K \in \CT_h}
\int_{\partial K}
\BE_h^\star \cdot \overline{\bw}_h \times \bn_K
=
i\omega (\BJ,\bv_h).
\end{equation}
If the fluxes are single-valued on every face of the mesh,
and if $\BH_h^\star = \bo$ on $\partial \Omega$, we may rewrite
\eqref{tmp_DG_formulation_2} with face-by-face integrals as
\begin{equation*}
b((\BE_h,\BH_h),(\bv_h,\bw_h))
+
\beta_h((\BE_h,\BH_h),(\bv_h,\bw_h))
=
i\omega (\BJ,\bv_h)
\end{equation*}
with
\begin{equation}
\label{eq_betah_flux}
\beta_h((\BE_h,\BH_h),(\bv_h,\bw_h))
\eq
-i\omega\sum_{F \in \CF_h^{\rm i}}
\int_F \BH_h^\star \cdot \jmp{\overline{\bv}_h} \times \bn_F
+i\omega
\sum_{F \in \CF_h}
\int_F \BE_h^\star \cdot \jmp{\overline{\bw}_h} \times \bn_F.
\end{equation}
One readily sees that for any single-valued flux, the stabilization
form $\beta_h(\cdot,\cdot)$ of \eqref{eq_betah_flux} satisfies
\eqref{eq_betah}. As a result, the forthcoming analysis applies to
a variety of DG schemes.

In particular, a rather general family of fluxes we cover reads
\begin{equation}
	\label{eq_example_flux}
	\BE_h^\star
	\eq
	\frac{1}{\avg{Y}} \left ( \avg{Y\BE_h} + \frac{\alpha}{2} \jmp{\BH_h} \times \bn \right ),
	\quad
	\BH_h^\star
	\eq
	\frac{1}{\avg{Z}} \left ( \avg{Z\BH_h} - \frac{\alpha}{2} \jmp{\BE_h} \times \bn \right ),
\end{equation}
where $Y,Z$ are piecewise constants weights, and $0 \leq \alpha \leq 1$,
see \cite[\S3.1.3]{viquerat_2015a}. We also refer the reader to
\cite{hesthaven_warburton_2002a,li_lanteri_perrussel_2014a,perugia_schotzau_2003}.
These numerical fluxes are called centered fluxes for $\alpha = 0$
and upwind fluxes for $\alpha = 1$.

\subsection{Hybridization}

One asset of the scheme associated with any of the fluxes
defined by \eqref{eq_example_flux} is that it is ``hybridizable'', which means that it can
be equivalently rewritten with a Lagrange multiplier living on the faces of the mesh
\cite{feng_lu_xu_2016a,li_lanteri_perrussel_2014a,nguyen_peraire_cockburn_2011a}.
Such hybridized form is usually called hybrid discontinuous Galerkin (HDG)
and exhibits fewer degrees of freedom than the ``naive''
formulation \eqref{eq_LDG}. It is thus well suited to speed up the solve of the associated
linear system. Here, for the sake of simplicity, we focus on the formulation \eqref{eq_LDG},
particularly for symmetry reasons with respect to the analysis of second-order
schemes. Notice, however, that since the hybridized system is an equivalent reformulation
of \eqref{eq_LDG}, the proposed estimators apply equally well to HDG discretizations.

\subsection{Numerical solution}

It is an open question whether the discrete problem \eqref{eq_LDG}
is well-posed for general meshes. However, the following analysis applies to any pair
$(\BE_h,\BH_h) \in \BCP_{k+1}(\CT_h) \times \BCP_{k+1}(\CT_h)$
satisfying \eqref{eq_LDG}, and unique solvability is not required.

We nevertheless mention \cite{feng_lu_xu_2016a} where the authors analyze
(the hybridized version of) the method with upwind fluxes
(\eqref{eq_example_flux} with $\alpha = 1$). They focus
on a homogeneous medium enclosed by impedance boundary conditions. In this
setting, a key feature of the scheme is that it is well-posed without any assumption
on the mesh size. While we work under slightly different assumptions here, we believe
that this stability result indicates that the method is interesting for adaptivity
techniques since a coarse mesh may be used to start the algorithm.
% TODO: cite \cite{bespalov_haberl_praetorius_2017a}
	
\subsection{Error estimators}

For each $K \in \CT_h$, the estimator is split into four parts
\begin{equation*}
\eta_K^2
\eq
\eta_{\ddiv,\ee,K}^2 + \eta_{\ddiv,\mm,K}^2 + \eta_{\ccurl,\ee,K}^2 + \eta_{\ccurl,\mm,K}^2,
\end{equation*}
where
\begingroup
\allowdisplaybreaks
\begin{align*}
\eta_{\ddiv,\ee,K}
&\eq
\frac{1}{\sqrt{\eemK}}
\left(
\frac{h_K}{p_K} \|\div(\BJ-i\omega\ee\BE_h)\|_K
+
\omega\sqrt{\frac{h_K}{p_K}}
\|\jmp{\ee(\BE_h-\BE^{\rm inc})} \cdot \bn\|_{\partial K \setminus \partial \Omega}
\right),\\
\eta_{\ddiv,\mm,K}
&\eq
\frac{1}{\sqrt{\mmmK}}
\left(
\omega\frac{h_K}{p_K}\|\div(\mm\BH_h)\|_K
+
\omega\sqrt{\frac{h_K}{p_K}}\|\jmp{\mm(\BH_h-\BH^{\rm inc})} \cdot \bn\|_{\partial K}
\right),\\
\eta_{\ccurl,\ee,K}
&\eq
\frac{1}{\sqrt{\eeMK}}\|\BJ-i\omega \ee \BE_h+\curl\BH_h\|_K
+
\sqrt{\mmMK}\omega \sqrt{\frac{h_K}{p_K}}
\|\jmp{(\BH_h-\BHi)}\times\bn\|_{\partial K \setminus \partial \Omega},
\\
\eta_{\ccurl,\mm,K}
&\eq
\frac{1}{\sqrt{\mmMK}}\|i\omega\mm\BH_h + \curl \BE_h\|_K
+
\sqrt{\eeMK}\omega \sqrt{\frac{h_K}{p_K}}
\|\jmp{(\BE_h-\BE^{\rm inc})}\times\bn\|_{\partial K}.
\end{align*}
\endgroup
	
We also set $\eta^2 \eq \eta_{\ddiv}^2 + \eta_{\ccurl}^2$ with
\begin{align*}
% \label{eq_definition_eta_global_LDG}
\eta_{\ddiv}^2
\eq
\sum_{K \in \CT_h} \left(\eta_{\ddiv,\ee,K}^2 + \eta_{\ddiv,\mm,K}^2\right),
\qquad
\eta_{\ccurl}^2
\eq
\sum_{K \in \CT_h} \left(\eta_{\ccurl,\ee,K}^2 + \eta_{\ccurl,\mm,K}^2\right).
\end{align*}

\section{Main results}
\label{sec_main}

This section introduces our theoretical results.
	
\subsection{Preliminary result for the TF-SF formulation}
	
We start with a preliminary result concerning the jump term in the right-hand side
of the DG formulation.

\begin{lemma}[Jump form with gradient arguments]\label{lemma_jump}
The identity
\begin{equation}
\label{eq_ell_grad}
\ell(\grad q,\grad r)
=
i\varepsilon_0\omega \langle\BEi \cdot \bni,q\rangle_{\Gi}
+
i\mu_0\omega \langle\BHi \cdot \bni,r\rangle_{\Gi}
\end{equation}
holds true for all $(q,r) \in H^1_0(\Omega) \times H^1(\Omega)$.
\end{lemma}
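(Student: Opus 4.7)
The plan is to unfold the definition of $\ell$ and then apply twice the classical Green's formula on the truncation subdomain $\OTF$, exploiting that gradients have vanishing curl, that the incident fields are divergence-free inside $\OTF$, and that $\Gi = \partial \OTF$.

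More concretely, I would start by recalling that
\begin{equation*}
\ell(\grad q,\grad r)
=
\langle \BHi, \grad q \times \bni\rangle_{\Gi}
-\langle \BEi, \grad r \times \bni\rangle_{\Gi},
\end{equation*}
and then treat each term separately. For the first one, I apply Green's identity for $\curl$ on $\OTF$ with the test field $\grad q$ (which is legitimate since $\BHi \in \BCH(\ccurl,\OTF)$ and $q \in \CH^1(\OTF)$), obtaining
\begin{equation*}
\langle \BHi, \grad q \times \bni\rangle_{\Gi}
=
(\curl \BHi, \grad q)_{\OTF} - (\BHi, \curl\grad q)_{\OTF}
=
(\curl \BHi, \grad q)_{\OTF},
\end{equation*}
because $\curl \grad q = \bo$. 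Substituting the Maxwell relation $\curl \BHi = i\omega\varepsilon_0 \BEi$ from \eqref{eq_assumption_incident_fields} and then integrating by parts once more with Green's identity for $\div$, I get
\begin{equation*}
\langle \BHi, \grad q \times \bni\rangle_{\Gi}
=
i\omega\varepsilon_0 (\BEi, \grad q)_{\OTF}
=
i\omega\varepsilon_0 \langle \BEi \cdot \bni, q\rangle_{\Gi}
-
i\omega\varepsilon_0 (\div \BEi, q)_{\OTF},
\end{equation*}
and the last volume term vanishes because $\div(\curl \BHi) = 0$ forces $\div \BEi = 0$ in $\OTF$. This yields the first desired contribution.

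For the second term, I repeat the same two-step procedure with $\BEi$ in place of $\BHi$, using $\curl \BEi = -i\omega\mu_0 \BHi$ and $\div \BHi = 0$ in $\OTF$; the overall minus sign in front of $\langle \BEi, \grad r \times \bni\rangle_{\Gi}$ combines with the minus sign coming from $-i\omega\mu_0$ to produce the second stated contribution $+ i\mu_0 \omega \langle \BHi \cdot \bni, r\rangle_{\Gi}$. Summing both contributions yields \eqref{eq_ell_grad}.

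The only nontrivial point is to justify that all the integrations by parts are licit. The hypotheses $\BEi, \BHi \in \BCH(\ccurl,\OTF) \cap \BCH(\ddiv,\OTF) \cap \BLL^2(\Gi)$ together with $q \in H^1_0(\Omega) \subset H^1(\OTF)$ and $r \in H^1(\Omega)$, applied on the Lipschitz polyhedral subdomain $\OTF$, are precisely what is needed to give meaning to the duality pairings $\langle \BEi \cdot \bni, q\rangle_{\Gi}$ and $\langle \BHi \cdot \bni, r\rangle_{\Gi}$ and to use the Green identities (in the sense of \cite{buffa_ciarlet_2001a}). This is really the only place where one must be careful; once the pairings are interpreted correctly, the identity follows algebraically.
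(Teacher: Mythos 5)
Your proof is correct and follows essentially the same route as the paper's: integrate by parts on $\OTF$ to convert the tangential pairing into $(\curl\BHi,\grad q)_{\OTF}$, substitute the free-space Maxwell relation from \eqref{eq_assumption_incident_fields}, and integrate by parts again using $\div\BEi=\div\BHi=0$ in $\OTF$, with the final pairing reducing to an $\BLL^2(\Gi)$ product thanks to the assumed trace regularity. The paper merely compresses these steps into a single chain of equalities for the $\BHi$ term and treats the other term symmetrically, exactly as you do.
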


\begin{proof}
Let $q \in H^1_0(\Omega)$. We will focus on the
$\langle \BHi,\grad q \times \bn_{\rm i}\rangle_{\Gi}$ term.
The other term in the definition of $\ell$ is treated similarly. We have
\begin{equation*}
\langle \BHi,\grad q \times \bn_{\rm i} \rangle_{\Gi}
=
(\curl \BHi,\grad q)_{\OTF}
=
i\varepsilon_0\omega (\BEi,\grad q)_{\OTF}
=
i\varepsilon_0\omega \langle \BEi \cdot \bn_{\rm i},q \rangle_{\Gi},
\end{equation*}
and \eqref{eq_ell_grad} follows: the last duality pairing
may be simplified into a usual $\BLL^2(\Gi)$ inner-product, as
$\BEi \in \BLL^2(\Gi)$ by assumption.
\end{proof}

\begin{remark}[General jump conditions]
If general jump conditions are employed (i.e., $\curl \BHi \neq i\omega \BEi$),
then a result similar to \eqref{eq_ell_grad} can still be derived if $\BEi \cdot \bni$
and $\BHi \cdot \bni$ are replaced by $\ddiv_{\Gi} \BHi$ and $\ddiv_{\Gi} \BEi$,
where $\ddiv_{\Gi}$ denotes the surface divergence along
$\Gi$. A similar modification has to be performed in the estimator as well. Although we lose
some generality, we prefer to emphasize the presentation of incident fields solution to
free-space Maxwell's equations. Indeed, they are usually employed in practice, and
the associated results are easier to understand.
\end{remark}
	
\subsection{Reliability}

We start by two lemmas where we show that the residual is controlled by the estimator.

\begin{lemma}[Control of the residual]
\label{lemma_residual_control}
The estimates
\begin{equation}
\label{eq_upper_bound_b_grad_LDG}
|b_h((\BE-\BE_h,\BH-\BH_h),(\grad p, \grad q))|
\lesssim
\omega \eta_{\ddiv}
\left( \|\grad p\|_{\ee,\Omega}+\|\grad q\|_{\mm,\Omega}\right)
\end{equation}
and
\begin{equation}
\label{eq_upper_bound_b_H1_LDG}
|b_h((\BE-\BE_h,\BH-\BH_h),(\bphi,\bpsi))|
\lesssim
\left (1 + \max_{K\in\CT_h} \frac{\omega h_K}{p_K c_{\tK,\min}} \right )
\eta_{\ccurl}
\left (
\|\grad \bphi\|_{\cc,\Omega}+\|\grad \bpsi\|_{\bzet,\Omega}
\right )
\end{equation}
hold true for all $p \in \CH^1_0(\Omega)$, $q \in \CH^1(\Omega)$,
$\bphi \in \BCH^1_0(\Omega)$ and $\bpsi \in \BCH^1(\Omega)$.
\end{lemma}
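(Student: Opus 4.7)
The plan is to bound the residual by inserting conforming quasi-interpolants and then exploiting the fact that $b_h$ and $b$ agree on conforming test functions. For the first estimate, I would set $p_h = \CQ_h p \in V_h$ and $q_h = \tCQ_h q \in \tV_h$, so that $(\grad p_h, \grad q_h) \in \BW_h \times \tBW_h$. By Galerkin orthogonality \eqref{eq_galerkin_orthogonality_first_order}, the contribution of $(\grad p_h, \grad q_h)$ vanishes, and since $\grad(p - p_h) \in \BCH_0(\ccurl,\Omega)$ and $\grad(q - q_h) \in \BCH(\ccurl,\Omega)$, the consistency property \eqref{consistency} lets me replace $b_h$ by $b$, giving
\begin{equation*}
b_h((\BE-\BE_h, \BH-\BH_h), (\grad p, \grad q)) = b((\BE-\BE_h, \BH-\BH_h), (\grad(p-p_h), \grad(q-q_h))).
\end{equation*}

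For the gradient test functions, the $\curl$ contributions in $b$ vanish because $\curl \grad = 0$, so only the mass terms survive. Using the weak formulation \eqref{eq_maxwell_weak_first_order} to substitute the $(\BE, \BH)$ part, I would reduce the right-hand side to $i\omega(\BJ, \grad(p-p_h)) + i\omega\ell(\grad(p-p_h), \grad(q-q_h)) + \omega^2(\ee \BE_h, \grad(p-p_h)) + \omega^2(\mm \BH_h, \grad(q-q_h))$. An element-wise integration by parts then produces the volumic residuals $\div(\BJ - i\omega \ee \BE_h)$ and $\omega \div(\mm \BH_h)$, together with face integrals of normal jumps $\jmp{\ee \BE_h}\cdot \bn$ and $\jmp{\mm \BH_h}\cdot \bn$. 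Lemma \ref{lemma_jump} absorbs the $\ell$ term into the normal jumps across $\Gi$, replacing the naked jumps by $\jmp{\ee(\BE_h - \BEi)}\cdot \bn$ and $\jmp{\mm(\BH_h - \BHi)}\cdot \bn$. Finally, Cauchy-Schwarz combined with the $L^2$ and boundary estimates \eqref{eq_quasi_interpolation_H1} for $\CQ_h$ and $\tCQ_h$, together with conversions of the weights via $\eemK, \mmmK$, yield \eqref{eq_upper_bound_b_grad_LDG}.

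For the second estimate, I would set $\bphi_h = \CR_h \bphi \in \BW_h$ and $\bpsi_h = \tCR_h \bpsi \in \tBW_h$, and proceed along the same lines: Galerkin orthogonality plus consistency reduce the quantity to $b((\BE-\BE_h, \BH-\BH_h), (\bphi-\bphi_h, \bpsi-\bpsi_h))$. This time the $\curl$ terms do not vanish, and element-wise integration by parts on the curl pairings, combined with the substitution from the weak formulation, rearranges everything into volumic curl residuals $\BJ - i\omega \ee \BE_h + \curl \BH_h$ and $i\omega \mm \BH_h + \curl \BE_h$, along with face integrals of tangential jumps, which with Lemma \ref{lemma_jump}-style bookkeeping of the TF-SF contribution become $\jmp{\BE_h - \BEi}\times\bn$ and $\jmp{\BH_h - \BHi}\times\bn$. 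Cauchy-Schwarz, with volume norms weighted by $\sqrt{\eeMK}, \sqrt{\mmMK}$ for the curl residuals and boundary norms weighted by $\sqrt{h_K/p_K}$ for the jumps, together with the $\BCH(\ccurl)$ quasi-interpolation bounds \eqref{eq_quasi_interpolation_Hc}, produce the desired $\eta_{\ccurl}$ factor.

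The main technical obstacle is tracking the coefficient weights so that the estimator components appear with the right normalizations. In particular, the factor $\omega h_K/(p_K c_{\tK,\min})$ in \eqref{eq_upper_bound_b_H1_LDG} arises when bounding $(\ee\BE_h + \ldots, \bphi - \bphi_h)_K$-type mass terms: the factor $h_K/p_K$ comes from \eqref{eq_quasi_interpolation_Hc} applied to $\|\bphi - \bphi_h\|_K$, the factor $\omega$ comes from the residual, and converting the unweighted gradient norm to $\|\grad \bphi\|_{\cc,\tK}$ introduces $\sqrt{\eeMK\mmMK} = 1/c_{\tK,\min}$ after pairing with the $\sqrt{\eeMK}$ weight of $\eta_{\ccurl,\ee,K}$. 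A careful sign bookkeeping, especially for the $i\omega$ prefactors and the orientation conventions for $\jmp{\cdot}$, is the other source of technical care but otherwise routine.
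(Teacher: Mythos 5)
Your proposal is correct and follows essentially the same route as the paper: Galerkin orthogonality with the conforming quasi-interpolants $\CQ_h,\tCQ_h$ (resp.\ $\CR_h,\tCR_h$), element-wise integration by parts to expose the volume residuals and the normal (resp.\ tangential) jumps, Lemma \ref{lemma_jump} to fold the TF-SF term into the jumps, and Cauchy--Schwarz with \eqref{eq_quasi_interpolation_H1}--\eqref{eq_quasi_interpolation_Hc}; the only cosmetic difference is that the paper first derives the intermediate bound for arbitrary $v,w$ (resp.\ $\bv,\bw$) and then substitutes the interpolation errors, whereas you subtract the interpolant first. Your accounting of the weights, including the origin of the factor $\omega h_K/(p_K c_{\tK,\min})$, matches the paper's.
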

	
%% TODO: this can be shorten
\begin{proof}
We first establish \eqref{eq_upper_bound_b_grad_LDG}. We observe that for any
$v \in \CH^1_0(\Omega)$, $w \in \CH^1(\Omega)$, we have
\begin{equation*}
b_h((\BE-\BE_h,\BH-\BH_h),(\grad v,\grad w))
=
i\omega(\BJ-i\omega\ee\BE_h,\grad v)-i\omega(i\omega\mm\BH_h,\grad w)
+i\omega\ell_h(\grad v,\grad w)
\end{equation*}
so that, thanks to Lemma \ref{lemma_jump}, we get
\begin{align*}
&\frac{1}{i\omega}b_h((\BE-\BE_h,\BH-\BH_h),(\grad v,\grad w))
\\
&=
-i\omega\langle \ee \BE_h\cdot \bn,v \rangle_{\partial \CT_h}
-(\div(\BJ-i\omega\ee \BE_h),v)_{\CT_h}
-i\omega\langle\mm\BH_h\cdot \bn,w\rangle_{\partial \CT_h}
+i\omega(\div(\mm \BH_h),w)_{\CT_h}\\
&\quad\ +i\varepsilon_0\omega \langle\BEi \cdot \bni,v\rangle_{\Gi}+i\mu_0\omega \langle\BHi \cdot \bni,w\rangle_{\Gi}
\\
&=
-i\omega \langle\jmp{\ee(\BE_h-\BEi)}\cdot \bni,v\rangle_{\CF_h^{\rm i}}
-(\div(\BJ-i\omega\ee \BE_h),v)_{\CT_h}
\\
&\quad
-i\omega \langle\jmp{\mm(\BH_h-\BHi)}\cdot \bni,w\rangle_{\CF_h^{\rm i}}
+i\omega(\div(\mm \BH_h),w)_{\CT_h},
\end{align*}
and therefore
\begin{align}
\label{tmp_upper_bound_b_grad_LDG}
&\frac{1}{\omega}|b_h((\BE-\BE_h,\BH-\BH_h),(\grad v, \grad w))|
\\
\nonumber
&\leq
\omega \sum_{F \in \CF_h^{\rm i}} \|\jmp{\ee(\BE_h-\BEi)}\cdot \bn\|_F\|v\|_F
+
\omega\sum_{F \in \CF_h} \|\jmp{\mm(\BH_h-\BHi)}\cdot \bn\|_F\|w\|_F
\\
\nonumber
&\quad+\sum_{K \in \CT_h}\left(\|\div(\BJ-i\omega\ee\BE_h)\|_K\|v\|_K+\omega\|\div(\mm\BH_h)\|_K\|w\|_K\right)
\\
\nonumber
&\lesssim
\sum_{K \in \CT_h}
\left(\sqrt{\varepsilon_{\widetilde K,\min}} \eta_{\ddiv,\ee,K}
\left (
\frac{p_K}{h_K}\|v\|_K
+
\sqrt{\frac{p_K}{h_K}}
\|v\|_{\partial K \setminus \partial \Omega}
\right )
\right.\\
\nonumber
&\qquad\qquad+\left.
\sqrt{\mu_{\widetilde K,\min}} \eta_{\ddiv,\mm,K}
\left (
\frac{p_K}{h_K}\|w\|_K
+
\sqrt{\frac{p_K}{h_K}}\|w\|_{\partial K}
\right )\right ).
\end{align}
Now, let $p \in \CH^1_0(\Omega)$ and $q \in \CH^1(\Omega)$.
Since $\grad (\CQ_h p)\in \BW_h$ and $\grad (\tCQ_h q) \in \tBW_h$,
by Galerkin orthogonality \eqref{eq_galerkin_orthogonality_first_order},
we can apply \eqref{tmp_upper_bound_b_grad_LDG}
with $v = p - \CQ_h p$ and $w = q - \tCQ_h q$ to show that
\begin{align*}
&
\frac{1}{\omega}|b_h((\BE-\BE_h,\BH-\BH_h),(\grad p, \grad q))|
\\
&=
|b_h((\BE-\BE_h,\BH-\BH_h),(\grad (p-\CQ_h p),\grad (q-\tCQ_h q)))|
\\
&\lesssim
\sum_{K \in \CT_h}
\left(
\sqrt{\varepsilon_{\widetilde K,\min}}
\eta_{\ddiv,\ee,K}
\left (
\frac{p_K}{h_K}\|p-\CQ_h p\|_K
+
\sqrt{\frac{p_K}{h_K}}
\|p-\CQ_h p\|_{\partial K \setminus \partial \Omega}\right)\right.
\\
&
\qquad\qquad
+\left.\sqrt{\mu_{\widetilde K,\min}} \eta_{\ddiv,\mm,K}
\left (
\frac{p_K}{h_K}\|q-\tCQ_h q\|_K
+
\sqrt{\frac{p_K}{h_K}}\|q-\tCQ_h q\|_{\partial K}
\right )
\right )
\\
&\lesssim
\sum_{K \in \CT_h}
\left(
\sqrt{\varepsilon_{\widetilde K,\min}} \eta_{\ddiv,\ee,K}\|\grad p\|_{\tK}
+\sqrt{\mu_{\widetilde K,\min}} \eta_{\ddiv,\mm,K}\|\grad q\|_{\tK}\right)
\\
&\lesssim
\eta_{\ddiv}\left( \|\grad p\|_{\ee,\Omega}+\|\grad q\|_{\mm,\Omega}\right),
\end{align*}
where we additionally employed \eqref{eq_quasi_interpolation_H1}. This shows
\eqref{eq_upper_bound_b_grad_LDG}.

We now focus on \eqref{eq_upper_bound_b_H1_LDG}.
Similarly, we start with arbitrary elements
$\bv \in \BCH^1(\CT_h) \cap \BCH_0(\ccurl,\Omega)$
and $\bw \in \BCH^1(\CT_h) \cap \BCH(\ccurl,\Omega)$. We have
\begin{align*}
&b_h((\BE-\BE_h,\BH-\BH_h),(\bv,\bw))\\
&=
(i\omega\BJ,\bv)+(\omega^2\ee\BE_h,\bv)
+
(i\omega\BH_h,\curl \bv)+(\omega^2\mm\BH_h,\bw)-(i\omega\BE_h,\curl\bw)+i\omega\ell_h(\bv,\bw)
\\
&=
(i\omega\BJ + \omega^2\ee \BE_h + i\omega\curl \BH_h,\bv)_{\CT_h}
-
i\omega\langle \BH_h \times \bn,\bv \rangle_{\partial \CT_h}+\langle \BHi,\bv \times \bn_{\rm i} \rangle_{\Gi}
\\
&\quad
-i\omega(i\omega\mm\BH_h + \curl \BE_h,\bw)
+
i\omega\langle \BE_h \times \bn,\bw \rangle_{\partial \CT_h}-\langle \BEi,\bw \times \bn_{\rm i} \rangle_{\Gi}
\\
&=
i\omega(\BJ - i\omega\ee\BE_h + \curl \BH_h,\bv)_{\CT_h}
-
i\omega\langle \jmp{\BH_h-\BHi} \times \bn,\bv \rangle_{\CF_h^{\rm i}}
\\
&\quad-i\omega(i\omega\mm\BH_h + \curl \BE_h,\bw)
+
i\omega\langle \jmp{\BE_h-\BEi} \times \bn,\bw \rangle_{\CF_h}
\end{align*}
and therefore
\begin{align}
\label{tmp_upper_bound_b_H1_LDG}
&|b_h((\BE-\BE_h,\BH-\BH_h),(\bv,\bw))|
\\
\nonumber
&\leq
\sum_{K \in \CT_h} \left(\omega\|\BJ - i\omega \ee\BE_h + \curl \BH_h\|_K\|\bv\|_K+\omega\|i\omega\mm\BH_h + \curl \BE_h	\|_K\|\bw\|_K\right)
\\
\nonumber
&\quad+
\omega \sum_{F \in \CF_h^{\rm i}} \|\jmp{\BH_h-\BHi} \times \bn\|_F\|\bv\times \bn\|_F
+
\omega \sum_{F \in \CF_h} \|\jmp{\BE_h-\BEi} \times \bn\|_F\|\bw\times \bn\|_F
\\
\nonumber
&\lesssim
\sum_{K \in \CT_h}
\left (1 +
\frac{\omega h_K}{p_K c_{\tK,\min}}
\right )
\left(\frac{1}{\sqrt{\mu_{\widetilde K,\max}}} \eta_{\ccurl,\ee,K}
\left (
\frac{p_K}{h_K}\|\bv\|_K
+
\sqrt{\frac{p_K}{h_K}}\|\bv \times \bn\|_{\partial K \setminus \partial \Omega}
\right)\right.\\
\nonumber
&\qquad\qquad\qquad\qquad\qquad\qquad
+\left.
\frac{1}{\sqrt{\varepsilon_{\widetilde K,\max}}}
\eta_{\ccurl,\mm,K}
\left (
\frac{p_K}{h_K}\|\bw\|_K
+
\sqrt{\frac{p_K}{h_K}} \|\bw \times \bn\|_{\partial K}
\right ) \right ).
\end{align}
Let now $\bphi \in \BCH^1_0(\Omega)$ and $\bpsi \in \BCH^1(\Omega)$.
Since $\CR_h \bphi \in \BW_h$ and $\tCR_h \bpsi \in \tBW_h$,
by Galerkin orthogonality \eqref{eq_galerkin_orthogonality_first_order},
we may employ \eqref{tmp_upper_bound_b_H1_LDG} with
$\bv = \bphi - \CR_h \bphi$, $\bw = \bpsi - \tCR_h \bpsi$ and
\eqref{eq_quasi_interpolation_Hc}, showing that
\begingroup
\allowdisplaybreaks
\begin{align*}
	&|b_h((\BE-\BE_h,\BH-\BH_h),(\bphi,\bpsi))|
	=
	|b_h((\BE-\BE_h,\BH-\BH_h),(\bphi-\CR_h\bphi,\bpsi-\tCR_h\bpsi))|
	\\
	&\lesssim
	\sum_{K \in \CT_h}
	\left (1 +
	\frac{\omega h_K}{p_K c_{\tK,\min}}
	\right )
	\left(
	\frac{1}{\sqrt{\mu_{\widetilde K,\max}}} \eta_{\ccurl,\ee,K}
	\left(
	\frac{p_K}{h_K}\|\bphi-\CR_h\bphi\|_K
	+
	\sqrt{\frac{p_K}{h_K}}
	\|(\bphi-\CR_h\bphi) \times \bn\|_{\partial K \setminus \partial \Omega}\right)\right.
	\\
	&\qquad\qquad\qquad\qquad\qquad\qquad
	+\left. \frac{1}{\sqrt{\varepsilon_{\widetilde K,\max}}} \eta_{\ccurl,\mm,K}\left(
	\frac{p_K}{h_K}\|\bpsi-\tCR_h\bpsi\|_K
	+
	\sqrt{\frac{p_K}{h_K}}\|(\bpsi-\tCR_h\bpsi) \times \bn\|_{\partial K}
	\right)\right)
	\\
	&\lesssim
	\sum_{K \in \CT_h}
	\left (1 +
	\frac{\omega h_K}{p_K c_{\tK,\min}}
	\right )
	\left(
	\eta_{\ccurl,\ee,K}\|\grad \bphi\|_{\cc,\tK}+\eta_{\ccurl,\mm,K}\|\grad \bpsi\|_{\bzet,\tK}\right)
	\\
	&\lesssim
	\left (
	1 +
	\max_{K\in\CT_h} \frac{\omega h_K}{p_K c_{\tK,\min}}
	\right )
	\eta_{\ccurl}
	\left(
	\|\grad \bphi\|_{\cc,\Omega}
	+
	\|\grad \bpsi\|_{\bzet,\Omega}
	\right ).
\end{align*}
\endgroup
\end{proof}
	
\begin{lemma}[General control of the residual]
We have
\begin{equation}
\label{eq_upper_bound_b_div_free_LDG}
|b_h((\BE-\BE_h,\BH-\BH_h),(\bthe,\bvthe))|
\lesssim
\left(1+\max_{K\in\CT_h}\frac{\omega h_K}{p_K c_{\tK,\min}} \right)
\eta \enorm{(\bthe,\bvthe)}
\end{equation}
for all $\bthe \in \BCH_0(\ccurl,\Omega)$ and $\bvthe \in \BCH(\ccurl,\Omega)$.
\end{lemma}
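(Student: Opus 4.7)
The plan is to reduce this general estimate to the two special cases already handled in Lemma \ref{lemma_residual_control} via the regular decomposition machinery recalled in Section \ref{sec_gradient_extraction}. Concretely, for the electric test function $\bthe \in \BCH_0(\ccurl,\Omega)$ I would invoke \eqref{eq_regularity_estimate} to write $\bthe = \bphi + \grad r$ with $\bphi \in \BCH^1_0(\Omega)$, $r \in \CH^1_0(\Omega)$, and similarly decompose $\bvthe = \bpsi + \grad s$ with $\bpsi \in \BCH^1(\Omega)$, $s \in \CH^1(\Omega)$. By sesquilinearity,
\begin{equation*}
b_h((\BE-\BE_h,\BH-\BH_h),(\bthe,\bvthe))
=
b_h((\BE-\BE_h,\BH-\BH_h),(\bphi,\bpsi))
+
b_h((\BE-\BE_h,\BH-\BH_h),(\grad r,\grad s)),
\end{equation*}
so the estimate splits into a ``gradient part'' and a ``$\BCH^1$ part'' that exactly match the hypotheses of the two inequalities proved in the previous lemma.

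Next I would bound each piece separately. For the gradient part, \eqref{eq_upper_bound_b_grad_LDG} together with the regularity bounds $\|\grad r\|_{\ee,\Omega} \lesssim \|\bthe\|_{\ee,\Omega}$ and $\|\grad s\|_{\mm,\Omega} \lesssim \|\bvthe\|_{\mm,\Omega}$ yields
\begin{equation*}
|b_h((\BE-\BE_h,\BH-\BH_h),(\grad r,\grad s))|
\lesssim
\eta_{\ddiv}\bigl(\omega\|\bthe\|_{\ee,\Omega}+\omega\|\bvthe\|_{\mm,\Omega}\bigr)
\lesssim
\eta_{\ddiv}\,\enorm{(\bthe,\bvthe)},
\end{equation*}
where the last inequality just absorbs the $\omega$ factor into the energy norm. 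For the $\BCH^1$ part, \eqref{eq_upper_bound_b_H1_LDG} combined with $\|\grad \bphi\|_{\cc,\Omega} \lesssim \enormE{\bthe}{\Omega}$ and $\|\grad \bpsi\|_{\bzet,\Omega} \lesssim \enormH{\bvthe}{\Omega}$ gives
\begin{equation*}
|b_h((\BE-\BE_h,\BH-\BH_h),(\bphi,\bpsi))|
\lesssim
\Bigl(1+\max_{K\in\CT_h}\tfrac{\omega h_K}{p_K c_{\tK,\min}}\Bigr)
\eta_{\ccurl}\,\enorm{(\bthe,\bvthe)}.
\end{equation*}
Adding the two bounds, using $\eta_{\ddiv}+\eta_{\ccurl}\lesssim \eta$, and noting that the multiplicative factor $1+\max_K \omega h_K/(p_K c_{\tK,\min})$ is $\geq 1$ (so it also absorbs the gradient contribution) produces the claimed estimate.

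No step is really a serious obstacle: the whole argument is a short assembly of earlier results. The only place where care is needed is matching the weighted norms between the regular decomposition bounds (which involve $\cR$, hidden in the $\lesssim$ convention since $\cR$ enters the generic constant) and the weighted energy norm $\enorm{\cdot}$, so one should verify that the decomposition is performed with the correct material weights (electric for $\bthe$, magnetic for $\bvthe$) so that the right-hand sides of \eqref{eq_regularity_estimate} indeed reproduce $\enormE{\bthe}{\Omega}$ and $\enormH{\bvthe}{\Omega}$. Once that bookkeeping is done, the combination is immediate.
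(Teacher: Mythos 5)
Your proposal is correct and follows essentially the same route as the paper: regular decomposition of $(\bthe,\bvthe)$ via \eqref{eq_regularity_estimate}, splitting $b_h$ by sesquilinearity, applying \eqref{eq_upper_bound_b_grad_LDG} and \eqref{eq_upper_bound_b_H1_LDG}, and absorbing the gradient bounds into $\enorm{(\bthe,\bvthe)}$ before combining with $\eta_{\ddiv}+\eta_{\ccurl}\lesssim\eta$. The bookkeeping point you flag about matching the material weights is exactly what the paper's final display handles.
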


\begin{proof}
Using the results from Section \ref{sec_gradient_extraction}, we can decompose,
any $\bthe \in \BCH_0(\ccurl,\Omega)$ and $\bvthe \in \BCH(\ccurl,\Omega)$ as
$\bthe = \bphi + \grad r$ and $\bvthe = \bpsi + \grad s$ with
$(\bphi,\bpsi) \in \BCH^1_0(\Omega) \times \BCH^1(\Omega)$ and
$(r,s) \in \CH^1_0(\Omega) \times \CH^1(\Omega)$. By employing
\eqref{eq_upper_bound_b_grad_LDG} and \eqref{eq_upper_bound_b_H1_LDG}
from Lemma \ref{lemma_residual_control}, we have
\begin{align*}
|b_h((\BE-\BE_h,\BH-\BH_h),(\bthe,\bvthe))|
&\leq
|b_h((\BE-\BE_h,\BH-\BH_h),(\bphi,\bpsi))|
\\
&\quad+
|b_h((\BE-\BE_h,\BH-\BH_h),(\grad r,\grad s))|
\\
&\lesssim
\eta_{\ccurl}
\left (1 + \max_{K \in \CT_h} \frac{\omega h_K}{p_K c_{\tK,\min}}\right )
\left (
\|\grad \bphi\|_{\cc,\Omega} + \|\grad \bpsi\|_{\bzet,\Omega}
\right )
\\
&\quad+
\eta_{\ddiv} \omega
\left (
\|\grad r\|_{\ee,\Omega} + \|\grad s\|_{\mm,\Omega}
\right ),
\end{align*}
and we conclude recalling that $\eta^2 \eq \eta_{\ccurl}^2 + \eta_{\ddiv}^2$ since
\eqref{eq_regularity_estimate} imply that
\begin{equation*}
\|\grad \bphi\|_{\cc,\Omega} + \|\grad \bpsi\|_{\bzet,\Omega}
\lesssim
\enorm{(\bthe,\bvthe)},
\qquad
\omega \left (\|\grad p\|_{\ee,\Omega} + \|\grad s\|_{\mm,\Omega}\right )
\lesssim
\enorm{(\bthe,\bvthe)}.
\end{equation*}
$ $
\end{proof}
	
	The next step is an Aubin-Nitsche type result that controls the $\BLL^2(\Omega)$-norm of
	the error to make up for the lack of coercivity of the sesquilinear form $b$. To this end,
	we first state a result concerning the approximation factor for first-order schemes.
	
	\begin{lemma}[Approximation factor]
		\label{lemma_approximation_factor_first_order}
		For all $\bj,\bl \in \BLL^2(\Omega)$, there exists a unique pair
		$(\be^\star,\bh^\star)(\bj,\bl) \in \BCH_0(\ccurl,\Omega) \times \BCH(\ccurl,\Omega)$
		such that
		\begin{equation*}
			b((\bv,\bw),(\be^\star,\bh^\star)(\bj,\bl))
			=
			\omega (\bv,\ee\bj) + \omega (\bw,\mm\bl).
		\end{equation*}
		
		In addition, if $\bj \in \BCH(\ddiv^0,\ee,\Omega)$ and $\bl \in \BCH_0(\ddiv^0,\mm,\Omega)$,
		we have
		\begin{equation}
			\label{eq_approximation_factor_first_order}
			\inf_{\substack{\be_h \in \BW_h \\ \bh_h \in \tBW_h}}
			\enormEH{(\be^\star,\bh^\star)(\bj,\bl)-(\be_h,\bh_h)}{\Omega}
			\lesssim
			(1+\gba)
			\left (
			\|\bj\|_{\ee,\Omega}
			+
			\|\bl\|_{\mm,\Omega}
			\right ).
		\end{equation}
	\end{lemma}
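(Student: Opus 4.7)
\emph{Existence and uniqueness.} Let $\be^\star_1 \eq \be^\star(\bj) \in \BCH_0(\ccurl,\Omega)$ be given by Assumption \ref{assumption_well_posedness} via \eqref{eq_def_be_bj}, and set $\bh^\star_1 \eq \frac{1}{i\omega}\cc\,\curl\be^\star_1$. The strong form of \eqref{eq_def_be_bj} gives $\curl\bh^\star_1 = -i\omega\ee\be^\star_1 - i\ee\bj \in \BLL^2(\Omega)$, so $\bh^\star_1 \in \BCH(\ccurl,\Omega)$. Symmetrically, set $\bh^\star_2 \eq \bh^\star(\bl)$ from \eqref{eq_def_bh_bl} and $\be^\star_2 \eq -\frac{1}{i\omega}\bzet\,\curl\bh^\star_2$; the natural boundary condition $(\bzet\curl\bh^\star_2)\times\bn = 0$ on $\partial\Omega$ arising from \eqref{eq_def_bh_bl} ensures $\be^\star_2 \in \BCH_0(\ccurl,\Omega)$. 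Taking $(\be^\star,\bh^\star)(\bj,\bl) \eq (\be^\star_1+\be^\star_2, \bh^\star_1+\bh^\star_2)$, a direct calculation based on the strong-form identities $\curl\be^\star = i\omega\mm\bh^\star + i\mm\bl$ and $\curl\bh^\star = -i\omega\ee\be^\star - i\ee\bj$ (together with integration by parts) verifies the claimed variational identity. For uniqueness, a homogeneous solution $(\be,\bh)$ must satisfy $\bh = \frac{1}{i\omega}\cc\curl\be$ (by testing with $(\bo,\bw)$ for arbitrary $\bw\in\BLL^2(\Omega)$), after which $\be$ solves the homogeneous version of \eqref{eq_def_be_bj}, hence $\be = \bo$ by Assumption \ref{assumption_well_posedness}, forcing $\bh = \bo$.

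\emph{Approximation bound.} I would construct $(\be_h,\bh_h) = (\be_h^{(1)} + \be_h^{(2)}, \bh_h^{(1)} + \bh_h^{(2)}) \in \BW_h \times \tBW_h$ piece by piece. The ``diagonal'' pieces come directly from the definitions \eqref{eq_gba_E} and \eqref{eq_gba_H}: since $\bj \in \BCH(\ddiv^0,\ee,\Omega)$ there exists $\be_h^{(1)} \in \BW_h$ with $\enormE{\be^\star_1 - \be_h^{(1)}}{\Omega} \leq \gbaE\|\bj\|_{\ee,\Omega}$, and since $\bl \in \BCH_0(\ddiv^0,\mm,\Omega)$ there exists $\bh_h^{(2)} \in \tBW_h$ with $\enormH{\bh^\star_2 - \bh_h^{(2)}}{\Omega} \leq \gbaH\|\bl\|_{\mm,\Omega}$; these account for the $\gba$-dependent contribution. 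For the ``cross'' pieces $\bh^\star_1$ and $\be^\star_2$, I would exploit the curl identities together with the facts that $\bh^\star_1 \in \BCH_0(\ddiv^0,\mm,\Omega)$ (since $\curl\be^\star_1 \cdot \bn = 0$ on $\partial\Omega$) and $\be^\star_2 \in \BCH(\ddiv^0,\ee,\Omega)$, both of which follow from the construction. Summing the four squared energy errors would then give the claimed $(1+\gba)(\|\bj\|_{\ee,\Omega} + \|\bl\|_{\mm,\Omega})$ bound.

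\emph{Main obstacle.} The key technical step is producing discrete approximants $\bh_h^{(1)} \in \tBW_h$ and $\be_h^{(2)} \in \BW_h$ of the cross terms whose errors carry only an $O(1)$ constant times $\|\bj\|_{\ee,\Omega}$ and $\|\bl\|_{\mm,\Omega}$ (the ``$1$'' in $1+\gba$), rather than the much larger stability constant $\cS$. The natural $\BLL^2$-close candidates $\frac{1}{i\omega}\cc\,\curl\be_h^{(1)}$ and $-\frac{1}{i\omega}\bzet\,\curl\bh_h^{(2)}$ are elementwise polynomial but have tangential jumps across interior faces and therefore fail to lie in $\tBW_h$ or $\BW_h$. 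I would project them via the quasi-interpolants $\tCR_h$ and $\CR_h$ and control the resulting jump contributions by linking them back to the diagonal errors already controlled by $\gba$; accomplishing this without reintroducing the stability constant $\cS$ is the delicate part of the analysis, and likely requires a careful exploitation of the divergence-free structure of $\bh^\star_1$ and $\be^\star_2$ and of the curl identities connecting them to $\be^\star_1$ and $\bh^\star_2$.
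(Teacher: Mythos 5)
Your existence/uniqueness argument by superposition is essentially fine (modulo sign and conjugation slips: since $\ee,\mm$ may be complex, the strong-form identities read $-i\omega\curl\bh^\star=\omega\ee\bj+\omega^2\overline{\ee}\be^\star$ and $i\omega\curl\be^\star=\omega\mm\bl+\omega^2\overline{\mm}\bh^\star$, so e.g.\ $\bh_1^\star=\tfrac{i}{\omega}\overline{\cc}\curl\be_1^\star$ and the natural divergence constraints are with respect to $\overline{\ee}$, $\overline{\mm}$ rather than $\ee$, $\mm$). The genuine gap is in the approximation bound, and you have correctly located it yourself: the cross terms $\bh_1^\star$ and $\be_2^\star$ are the whole difficulty, and your sketch does not close it. Note first that the problem is quantitatively severe: $\omega\|\bh_1^\star\|_{\mm,\Omega}\sim\|\curl\be_1^\star\|_{\cc,\Omega}\le\enormE{\be_1^\star}{\Omega}$, which is only controlled by $\cSE\|\bj\|_{\ee,\Omega}$, so even the trivial approximant $\bh_h^{(1)}=\bo$ costs the stability constant $\cS$ rather than $1+\gba$. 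Second, $\bh_1^\star$ solves $\sh(\bphi,\bh_1^\star)=-i(\overline{\ee}\bzet\curl\bphi,\bj)$, whose right-hand side is a functional of $\curl\bphi$ and hence is \emph{not} of the form $\omega(\bphi,\mm\bl')$ for any $\bl'\in\BCH_0(\ddiv^0,\mm,\Omega)$; the definition \eqref{eq_gba_H} of $\gbaH$ therefore cannot be invoked for it. Third, your fallback of quasi-interpolating $\tfrac{1}{i\omega}\cc\curl\be_h^{(1)}$ fails on two counts: this field is a discontinuous piecewise polynomial, so it is outside the domain $\BCH^1(\CT_h)\cap\BCH(\ccurl,\Omega)$ for which \eqref{eq_quasi_interpolation_Hc} holds (and the right-hand side $\|\grad\bw\|_{\tK}$ does not see its tangential jumps); and even if an $\BLL^2$-close conforming approximant were produced, nothing in your construction controls $\|\curl(\bh_1^\star-\bh_h^{(1)})\|_{\bzet,\Omega}$, which is half of the energy norm.

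The missing idea, which is the heart of the paper's proof, is a \emph{lifting} of the offending right-hand side through a coercive auxiliary problem: one defines $\bh_0\in\BCH(\ccurl,\Omega)$ by $2\omega^2(\mm\bphi,\bh_0)+\sh(\bphi,\bh_0)=-i(\overline{\ee}\bzet\curl\bphi,\bj)$ for all $\bphi$. By the G\aa rding identity \eqref{eq_garding_inequality_H} the left-hand side is coercive in the energy norm, so $\enormH{\bh_0}{\Omega}\lesssim\|\bj\|_{\ee,\Omega}$ with an $\omega$-independent constant; this piece is approximated by zero and produces the ``$1$'' in $1+\gba$. The remainder $\tbh=\bh^\star-\bh_0$ then satisfies $\sh(\bphi,\tbh)=\omega(\bphi,\mm\tbl)$ with $\tbl=\bl+2\omega\mm^{-1}\overline{\mm}\bh_0$, and testing the definition of $\bh_0$ with gradients shows $\tbl\in\BCH_0(\ddiv^0,\mm,\Omega)$ with $\|\tbl\|_{\mm,\Omega}\lesssim\|\bj\|_{\ee,\Omega}+\|\bl\|_{\mm,\Omega}$, so $\gbaH$ applies to $\tbh$; the electric field is treated symmetrically. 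Without this (or an equivalent) device your decomposition cannot produce the constant $1+\gba$, so as written the proposal does not prove \eqref{eq_approximation_factor_first_order}.
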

	
	%% TODO: this can be shorten
	\begin{proof}
		Let $\bj,\bl \in \BLL^2(\Omega)$ and set $(\be,\bh) \eq (\be^\star,\bh^\star)(\bj,\bl)$.
		We first observe that
		\begin{equation*}
			-i\omega\curl \bh = \omega \ee \bj + \omega^2\overline{\ee} \be,
			\qquad
			i\omega\curl \be = \omega \mm \bl + \omega^2\overline{\mm} \bh.
		\end{equation*}
		Then, considering $\bphi \in \BCH(\ccurl,\Omega)$ and $\bpsi \in \BCH_0(\ccurl,\Omega)$,
		selecting the test functions $\bv \eq \bzet \curl \bphi$ and $\bw \eq \cc \curl \bpsi$,
		and integrating by parts, we show that
		\begin{equation*}
			\se(\bpsi,\be) = \omega(\bpsi,\ee\bj) +i (\overline{\mm} \cc\curl \bpsi,\bl),
			\qquad
			\sh(\bphi,\bh) = \omega(\bphi,\mm\bl) -i (\overline{\ee} \bzet\curl \bphi,\bj),
		\end{equation*}
		for all $\bpsi \in \BCH_0(\ccurl,\Omega)$ and $\bphi \in \BCH(\ccurl,\Omega)$.
		At this point, it is tempting to use the approximation factors
		$\gbaE$ and $\gbaH$. However, recalling their definition in \eqref{eq_gba},
		it is not possible yet, since the right-hand sides are not in $\BLL^2(\Omega)$. The key idea
		then consists in ``lifting'' the last term in the above identities.
		To do so, we introduce $\be_0$ and $\bh_0$ as the unique elements
		of $\BCH_0(\ccurl,\Omega)$ and $\BCH(\ccurl,\Omega)$ such that
		\begin{equation*}
			2\omega^2 (\ee\bpsi,\be_0) + \se(\bpsi,\be_0)
			=
			i (\overline{\mm} \cc \curl \bpsi,\bl),
			\qquad
			2\omega^2 (\mm\bphi,\bh_0) + \sh(\bphi,\bh_0)
			=
			-i (\overline{\ee} \bzet \curl \bphi,\bj),
		\end{equation*}
		for all $\bpsi \in \BCH_0(\ccurl,\Omega)$ and $\bphi \in \BCH(\ccurl,\Omega)$.
		As can be seen from \eqref{eq_garding_inequalities}, the left-hand sides correspond
		to coercive sesquilinear forms, and we have
		\begin{align*}
			\enormE{\be_0}{\Omega}^2
			=
			\Re \left (
			2\omega^2 (\ee\be_0,\be_0) + \se(\be_0,\be_0)
			\right )
			&=
			\Re i (\overline{\mm}\cc
			\curl \be_0,\bl)
			\\
			&\lesssim
			\|\curl \be_0\|_{\cc,\Omega}\|\bl\|_{\mm,\Omega}
			\leq
			\|\bl\|_{\mm,\Omega} \enormE{\be_0}{\Omega}.
		\end{align*}
		As a result, we have $\enormE{\be_0}{\Omega} \lesssim \|\bl\|_{\mm,\Omega}$.
		Similar arguments show that $\enormH{\bh_0}{\Omega} \lesssim \|\bj\|_{\ee,\Omega}$,
		and therefore
		\begin{equation*}
			\enorm{(\be_0,\bh_0)}
			\lesssim
			\|\bj\|_{\ee,\Omega} + \|\bl\|_{\mm,\Omega}.
		\end{equation*}
		
		On the other hand, we see that
		\begin{align*}
			\se(\bpsi,\be_0)
			=
			i (\overline{\mm} \cc \curl \bpsi,\bl) - 2\omega^2 (\ee\bpsi,\be_0),
			\qquad
			\sh(\bphi,\bh_0)
			=
			-i (\overline{\ee} \bzet \curl \bphi,\bj) - 2\omega^2 (\mm\bphi,\bh_0),
		\end{align*}
		and therefore, letting $(\tbe,\tbh) \eq (\be,\bh)-(\be_0,\bh_0)$, we have
		\begin{align*}
			\se(\bpsi,\tbe)
			&=
			\omega (\bpsi,\ee\bj) + 2\omega^2 (\ee\bpsi,\be_0)
			=
			\omega (\bpsi,\ee\tbj),
			\\
			\sh(\bphi,\tbh)
			&=
			\omega (\bphi,\mm\bl) + 2\omega^2(\mm\bphi,\bh_0)
			=                
			\omega (\bphi,\mm\tbl),
		\end{align*}
		with
		$(\tbj,\tbl) \eq (\bj,\bl) + 2\omega(\ee^{-1}\overline{\ee}\be_0,\mm^{-1} \overline{\mm}\bh_0)$.
		
		Now, we observe that picking a gradient as a test function
		in the definition of $\be_0$ and $\bh_0$ reveals that
		$\be_0 \in \BCH(\ddiv^0,\overline{\ee},\Omega)$
		and
		$\bh_0 \in \BCH_0(\ddiv^0,\overline{\mm},\Omega)$.
		Hence
		$\ee^{-1} \overline{\ee} \be_0 \in \BCH(\ddiv^0,\ee,\Omega)$
		and
		$\mm^{-1} \overline{\mm} \bh_0 \in \BCH(\ddiv^0,\mm,\Omega)$.
		As a result, we have
		\begin{align*}
			\inf_{\be_h \in \BW_h}\enormE{\tbe-\be_h}{\Omega}
			&\leq
			\gbaE \|\tbj\|_{\ee,\Omega}
			\lesssim
			\gbaE \left (\|\bj\|_{\ee,\Omega} + 2\enormE{\be_0}{\Omega}\right )\\
			&\lesssim
			\gbaE \left (\|\bj\|_{\ee,\Omega} + \|\bl\|_{\mm,\Omega} \right ),
		\end{align*}
		and similarly
		\begin{equation*}
			\inf_{\bh_h \in \tBW_h} \enormH{\tbh-\bh_h}{\Omega}
			\lesssim
			\gbaH \left (\|\bj\|_{\ee,\Omega} + \|\bl\|_{\mm,\Omega} \right ).
		\end{equation*}
		Now, \eqref{eq_approximation_factor_first_order} follows since
		\begin{align*}
			&\inf_{\substack{\be_h \in \BW_h \\ \bh_h \in \tBW_h}}
			\enorm{(\be,\bh)-(\be_h,\bh_h)}
			=
			\inf_{\substack{\be_h \in \BW_h \\ \bh_h \in \tBW_h}}
			\enorm{(\be_0,\bh_0)+(\tbe,\tbh)-(\be_h,\bh_h)}
			\\
			&\leq
			\enorm{(\be_0,\bh_0)} +
			\inf_{\substack{\be_h \in \BW_h \\ \bh_h \in \tBW_h}} \enorm{(\tbe,\tbh)-(\be_h,\bh_h)}
			\\
			&=
			\enorm{(\be_0,\bh_0)}
			+
			\inf_{\be_h \in \BW_h} \enormE{\tbe-\be_h}{\Omega}
			+
			\inf_{\bh_h \in \tBW_h} \enormH{\tbh-\bh_h}{\Omega}
			\\
			&\lesssim
			(1+\gbaE+\gbaH) \left ( \|\bj\|_{\ee,\Omega} + \|\bl\|_{\mm,\Omega} \right )
			=
			(1+\gba) \left ( \|\bj\|_{\ee,\Omega} + \|\bl\|_{\mm,\Omega} \right ).
		\end{align*}
	\end{proof}
	
	\begin{lemma}[Aubin-Nitsche]
		\label{lemma_aubin_nitsche} 
		We have
		\begin{equation*}
			\omega\|\BE-\BE_h\|_{\ee,\Omega} + \omega\|\BH-\BH_h\|_{\mm,\Omega}
			\lesssim
			\left(1+\max_{K\in\CT_h}\frac{\omega h_K}{p_K c_{\tK,\min}}\right)
			\!\left (1 + \gba\right ) \eta.
		\end{equation*}
	\end{lemma}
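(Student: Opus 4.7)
The statement is a standard Aubin--Nitsche type $\BLL^2$-estimate obtained by a duality argument built on Lemma~\ref{lemma_approximation_factor_first_order} together with the already-established residual control. The subtlety is that the natural choice of dual data, namely $\bj=\BE-\BE_h$ and $\bl=\BH-\BH_h$, is \emph{not} weighted-divergence-free, so the sharp factor $(1+\gba)$ in \eqref{eq_approximation_factor_first_order} is not immediately applicable. The remedy is to perform a weighted Helmholtz decomposition and treat the divergence-free and gradient parts separately.

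\textbf{Step 1 (Helmholtz decompositions).} I would write
\[
\BE-\BE_h = \bj_0 + \grad p, \qquad \BH-\BH_h = \bl_0 + \grad q,
\]
where $p\in \CH^1_0(\Omega)$ and $q\in \CH^1(\Omega)$ solve $\div(\ee\grad p)=\div(\ee(\BE-\BE_h))$ and $\div(\mm\grad q)=\div(\mm(\BH-\BH_h))$, so that $\bj_0\in\BCH(\ddiv^0,\ee,\Omega)$ and $\bl_0\in\BCH_0(\ddiv^0,\mm,\Omega)$. The choice of boundary conditions on $p,q$ (Dirichlet vs.\ Neumann) matches the essential conditions $\BE\times\bn=0$ and $\mm\BH\cdot\bn=0$ and yields the weighted orthogonalities $(\ee\bj_0,\grad p)=0$ and $(\mm\bl_0,\grad q)=0$, whence the Pythagorean identities
\[
\|\BE-\BE_h\|_{\ee,\Omega}^2=\|\bj_0\|_{\ee,\Omega}^2+\|\grad p\|_{\ee,\Omega}^2, \qquad \|\BH-\BH_h\|_{\mm,\Omega}^2=\|\bl_0\|_{\mm,\Omega}^2+\|\grad q\|_{\mm,\Omega}^2.
\]

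\textbf{Step 2 (Duality on the divergence-free part).} Apply Lemma~\ref{lemma_approximation_factor_first_order} with data $(\bj_0,\bl_0)$ to produce a dual pair $(\be^\star,\bh^\star)$. Testing its defining identity with $(\BE-\BE_h,\BH-\BH_h)$ and using the orthogonalities of Step~1 gives
\[
\omega\|\bj_0\|_{\ee,\Omega}^2+\omega\|\bl_0\|_{\mm,\Omega}^2=\Re\,b((\BE-\BE_h,\BH-\BH_h),(\be^\star,\bh^\star)).
\]
Consistency \eqref{consistency} upgrades $b$ to $b_h$, after which Galerkin orthogonality \eqref{eq_galerkin_orthogonality_first_order} lets me subtract an arbitrary $(\be_h,\bh_h)\in\BW_h\times\tBW_h$. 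Feeding the result into the general residual bound \eqref{eq_upper_bound_b_div_free_LDG} and then taking the infimum over $(\be_h,\bh_h)$, the approximation-factor estimate \eqref{eq_approximation_factor_first_order} yields
\[
\omega\bigl(\|\bj_0\|_{\ee,\Omega}^2+\|\bl_0\|_{\mm,\Omega}^2\bigr)\lesssim \Bigl(1+\max_{K\in\CT_h}\tfrac{\omega h_K}{p_K c_{\tK,\min}}\Bigr)(1+\gba)\,\eta\,\bigl(\|\bj_0\|_{\ee,\Omega}+\|\bl_0\|_{\mm,\Omega}\bigr).
\]
Dividing by $(\|\bj_0\|_{\ee,\Omega}^2+\|\bl_0\|_{\mm,\Omega}^2)^{1/2}$ produces the desired bound for the divergence-free parts.

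\textbf{Step 3 (Gradient parts and conclusion).} Test $b_h$ with $(\grad p,\grad q)$, noting that $\grad p\in\BCH_0(\ccurl,\Omega)$ and $\grad q\in\BCH(\ccurl,\Omega)$. Consistency and the vanishing of the curl terms give, by the orthogonality from Step~1,
\[
\Re\, b_h((\BE-\BE_h,\BH-\BH_h),(\grad p,\grad q))=-\omega^2\bigl(\|\grad p\|_{\ee,\Omega}^2+\|\grad q\|_{\mm,\Omega}^2\bigr),
\]
while \eqref{eq_upper_bound_b_grad_LDG} bounds the same quantity by $\omega\eta_{\ddiv}\bigl(\|\grad p\|_{\ee,\Omega}+\|\grad q\|_{\mm,\Omega}\bigr)$. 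Dividing yields $\omega(\|\grad p\|_{\ee,\Omega}^2+\|\grad q\|_{\mm,\Omega}^2)^{1/2}\lesssim \eta_{\ddiv}\leq \eta$. Combining with Step~2 via the Pythagorean identities closes the proof. The main obstacle I anticipate is bookkeeping the Helmholtz decomposition so that the boundary conditions and weighted orthogonalities line up exactly with the hypotheses of Lemmas~\ref{lemma_residual_control} and \ref{lemma_approximation_factor_first_order}; everything else is algebraic manipulation on top of the already-proved residual-control and approximation-factor lemmas.
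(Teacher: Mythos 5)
Your proposal follows essentially the same route as the paper: the weighted Helmholtz decomposition of the error, the bound \eqref{eq_upper_bound_b_grad_LDG} for the gradient parts, and a duality argument on the divergence-free parts combining consistency, Galerkin orthogonality, \eqref{eq_upper_bound_b_div_free_LDG}, and \eqref{eq_approximation_factor_first_order}. The only quibble is your claimed exact Pythagorean identities, which need not hold when $\ee,\mm$ are complex and non-self-adjoint (the orthogonality $(\ee\bj_0,\grad p)=0$ does not imply $\Re(\ee\grad p,\bj_0)=0$); this is harmless, since the triangle inequality for $\|\cdot\|_{\ee,\Omega}$ and $\|\cdot\|_{\mm,\Omega}$ is all that is needed to recombine the two parts, which is how the paper concludes.
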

	
	\begin{proof}
		The proof relies on the Helmholtz decomposition of the error. We thus define
		$p \in \CH^1_0(\Omega)$ and $q \in \CH^1(\Omega)$ such that
		\begin{equation*}
			(\ee\grad p,\grad v) = (\ee(\BE-\BE_h),\grad v),
			\qquad
			(\mm\grad q,\grad w) = (\mm(\BH-\BH_h),\grad w),
		\end{equation*}
		for all $v \in \CH^1_0(\Omega)$ and $w \in \CH^1(\Omega)$. Notice that $p$
		is uniquely defined and that $q$ is defined up to a constant, that does
		not contribute to its gradient. Then, we have
		\begin{equation*}
			\BE-\BE_h = \grad p + \bthe, \qquad \BH-\BH_h = \grad q + \bvthe,
		\end{equation*}
		with $p \in\CH^1_0(\Omega)$, $q \in\CH^1(\Omega)$, $\bthe \in \BCH(\ddiv^0,\ee,\Omega)$ and
		$\bvthe \in \BCH_0(\ddiv^0,\mm,\Omega)$. 
		For the gradient terms, we have
		\begin{align*}
			\omega^2\|\grad p\|_{\ee,\Omega}^2 + \omega^2\|\grad q\|_{\mm,\Omega}^2
			&=
			\omega^2\Re(\ee(\BE-\BE_h),\grad p) + \omega^2\Re(\mm(\BH-\BH_h),\grad q)
			\\
			&=
			-\Re b_h((\BE-\BE_h,\BH-\BH_h),(\grad p,\grad q))
			\lesssim
			\omega\eta_{\ddiv}(\|\grad p\|_{\ee,\Omega} + \|\grad q\|_{\mm,\Omega}),
		\end{align*}
		so that
		\begin{equation}
			\label{tmp_aubin_nitsche_grad_p_grad_q}
			\omega\|\grad p\|_{\ee,\Omega} + \omega\|\grad q\|_{\mm,\Omega}
			\lesssim
			\eta_{\ddiv}.
		\end{equation}
		
		For the remaining terms, we observe that
		\begin{align*}
			\omega \|\bthe\|_{\ee,\Omega}^2 + \omega \|\bvthe\|_{\mm,\Omega}^2
			&=
			\Re \left (\omega(\ee\bthe,\bthe) + \omega (\mm\bvthe,\bvthe)\right )
			=
			\Re \left (
			\omega(\bthe,\ee\bthe) + \omega (\bvthe,\mm\bvthe)
			\right )
			\\
			&=
			\Re \left (
			\omega (\BE-\BE_h,\ee\bthe) + \omega (\BH-\BH_h,\mm \bvthe)
			\right ).
		\end{align*}
		Then, by Lemma \ref{lemma_approximation_factor_first_order}, we may define $(\bxi,\bzet)$
		as the unique element of $\BCH_0(\ccurl,\Omega) \times \BCH(\ccurl,\Omega)$ such that
		\begin{equation*}
			b((\bw,\bv),(\bxi,\bzet)) = \omega(\bw,\ee\bthe) + \omega(\bv,\mm\bvthe)
		\end{equation*}
		for all $\bw,\bv \in \BLL^2(\Omega)$.
		Using consistency property \eqref{consistency}, Galerkin orthogonality
		\eqref{eq_galerkin_orthogonality_first_order} and \eqref{eq_upper_bound_b_div_free_LDG},
		we have
		\begin{align*}
			\Re (\omega ((\BE-\BE_h),\ee\bthe) + \omega ((\BH-\BH_h),\mm \bvthe))
			&=
			\Re b((\BE-\BE_h,\BH-\BH_h),(\bxi,\bzet))\\
			=
			\Re b_h((\BE-\BE_h,\BH-\BH_h),(\bxi,\bzet))
			&=
			\Re b_h((\BE-\BE_h,\BH-\BH_h),(\bxi-\bxi_h,\bzet-\bzet_h))\\
			&\lesssim
			\left(1+\max_{K\in\CT_h}\frac{\omega h_K}{p_K c_{\tK,\min}} \right)
			\!\eta
			\enorm{(\bxi-\bxi_h,\bzet-\bzet_h)}
		\end{align*}
		for all $\bxi_h\in\BW_h$ and $\bzet_h\in\tBW_h$. Then, recalling
		\eqref{eq_approximation_factor_first_order},
		we deduce that
		\begin{multline*}
			\Re (\omega ((\BE-\BE_h),\ee\bthe) + \omega ((\BH-\BH_h),\mm \bvthe))
			\\
			\lesssim
			\!\left(1+\max_{K\in\CT_h}\frac{\omega h_K}{p_K c_{\tK,\min}}\right)\!
			\left (1 + \gba \right )
			\eta
			\ \!(\|\bthe\|_{\ee,\Omega}+\|\bvthe\|_{\mm,\Omega}).
		\end{multline*}
		Hence, 
		\begin{equation*}
			\omega \|\bthe\|_{\ee,\Omega}^2 + \omega \|\bvthe\|_{\mm,\Omega}^2
			\lesssim
			\left (1+\max_{K\in\CT_h}\frac{\omega h_K}{p_K c_{\tK,\min}}\right )
			\!\left (1 + \gba \right )
			\eta
			\ \!(\|\bthe\|_{\ee,\Omega}+\|\bvthe\|_{\mm,\Omega}),
		\end{equation*}
		and the result follows from \eqref{tmp_aubin_nitsche_grad_p_grad_q}.
	\end{proof}
	
	We are now ready to establish the main result of this section.
	Notice that in contrast to second-order schemes \cite{chaumontfrelet_vega_2020},
	the estimate does not stem from a continuous-level G\aa rding inequality. The
	``electric-magnetic mismatch'' part of the estimator is employed instead.
	
	\begin{theorem}[Reliability]
		\label{rel_LDG}
		The estimate
		\begin{equation}
			\label{eq_reliability_LDG}
			\enormEH{(\BE-\BE_h,\BH-\BH_h)}{\CT_h}
			\lesssim
			\left (
			1 + \max_{K\in\CT_h}\frac{\omega h_K}{p_K c_{\tK,\min}}
			\right )
			\!\left (
			1 + \gba
			\right )
			\eta
		\end{equation}
		holds true.
	\end{theorem}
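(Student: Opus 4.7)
The plan is to split the left-hand side of \eqref{eq_reliability_LDG} as
\[
\enormEH{(\BE-\BE_h,\BH-\BH_h)}{\CT_h}^2
= \omega^2\|\BE-\BE_h\|_{\ee,\Omega}^2 + \omega^2\|\BH-\BH_h\|_{\mm,\Omega}^2
+ \|\curl(\BE-\BE_h)\|_{\cc,\CT_h}^2 + \|\curl(\BH-\BH_h)\|_{\bzet,\CT_h}^2,
\]
and to treat the $\BLL^2$ and broken-curl contributions separately. Lemma \ref{lemma_aubin_nitsche} already bounds the two $\BLL^2$ pieces by exactly the right-hand side of \eqref{eq_reliability_LDG}, so the remaining task is only to control the broken-curl contributions by the same quantity.

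For this, I would use the strong form of Maxwell's equations, which hold elementwise for the continuous solution: on each $K\in\CT_h$, $\curl\BH = i\omega\ee\BE - \BJ$ and $\curl\BE = -i\omega\mm\BH$. Subtracting $\curl\BH_h$ and $\curl\BE_h$ and rearranging produces the elementwise identities
\[
\curl(\BH-\BH_h)|_K = i\omega\ee(\BE-\BE_h)|_K - \bigl(\BJ - i\omega\ee\BE_h + \curl\BH_h\bigr)|_K,
\]
\[
\curl(\BE-\BE_h)|_K = -i\omega\mm(\BH-\BH_h)|_K - \bigl(i\omega\mm\BH_h + \curl\BE_h\bigr)|_K,
\]
in which the parenthetical terms on the right are precisely the volume residuals that define $\eta_{\ccurl,\ee,K}$ and $\eta_{\ccurl,\mm,K}$. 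This is exactly what the remark after the theorem alludes to: the electric--magnetic mismatch components of the estimator are used directly, rather than a continuous-level G{\aa}rding inequality as in the second-order analysis of \cite{chaumontfrelet_vega_2020}.

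Measuring both identities in the $\bzet$- and $\cc$-weighted $\BLL^2(K)$ norms and applying the triangle inequality, the residual terms are controlled by $\eta_{\ccurl,\ee,K}$ and $\eta_{\ccurl,\mm,K}$ (up to the coefficient contrast $\LC_{\rm c}$, which is absorbed in $\lesssim$), while the $i\omega\ee(\BE-\BE_h)$ and $i\omega\mm(\BH-\BH_h)$ terms are bounded by $\omega\|\BE-\BE_h\|_{\ee,K}$ and $\omega\|\BH-\BH_h\|_{\mm,K}$ up to the same constants. Squaring, summing over $K\in\CT_h$, and invoking Lemma \ref{lemma_aubin_nitsche} for the resulting $\BLL^2$ contributions yields \eqref{eq_reliability_LDG}.

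I do not expect real obstacles here: the substantive analytical content (the duality argument producing the factor $1+\gba$) has already been absorbed into Lemmas \ref{lemma_approximation_factor_first_order} and \ref{lemma_aubin_nitsche}, and the remaining step is essentially bookkeeping. The only point requiring care is checking that the weights $1/\sqrt{\eeMK}$ and $1/\sqrt{\mmMK}$ appearing in $\eta_{\ccurl,\ee,K}$ and $\eta_{\ccurl,\mm,K}$ are compatible with the $\bzet$- and $\cc$-weighted norms on the left, which follows at once from the uniform contrast bound $\LC_{\rm c}$ built into $\lesssim$.
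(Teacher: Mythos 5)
Your proposal is correct and follows essentially the same route as the paper: the paper's proof likewise rewrites $\curl(\BE-\BE_h)$ and $\curl(\BH-\BH_h)$ via the strong first-order equations, bounds the resulting volume residuals by $\eta_{\ccurl}$ with the coefficient contrasts absorbed into $\lesssim$, and then invokes Lemma \ref{lemma_aubin_nitsche} for the remaining $\omega\|\BE-\BE_h\|_{\ee,\Omega}+\omega\|\BH-\BH_h\|_{\mm,\Omega}$ terms. No gaps.
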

	
	\begin{proof}
		We start by the observation that
		\begin{align*}
			\curl(\BE-\BE_h) &= -i\omega\mm(\BH-\BH_h)-(i\omega\mm\BH_h+\curl\BE_h),
			\\
			\curl(\BH-\BH_h) &= i\omega\ee(\BE-\BE_h)-(\BJ-i\omega\ee\BE_h+\curl\BH_h),
		\end{align*}
		which immediately yields the estimates
		\begin{align*}
			\|\curl(\BE-\BE_h)\|_{\cc,\Omega}
			&\lesssim
			\omega\|\BH-\BH_h\|_{\mm,\Omega}
			+
			\sum_{K\in\CT_h}\frac{1}{\sqrt{\mmMK}}\|i\omega\mm\BH_h+\curl\BE_h\|_K,
			\\
			\|\curl(\BH-\BH_h)\|_{\bzet,\Omega}
			&
			\lesssim
			\omega\|\BE-\BE_h\|_{\ee,\Omega}
			+
			\sum_{K\in\CT_h}\frac{1}{\sqrt{\eeMK}}\|\BJ-i\omega\ee\BE_h+\curl\BH_h\|_K.
		\end{align*}
		Adding the last two inequalities, we have
		\begin{equation*}
			\|\curl(\BE-\BE_h)\|_{\cc,\Omega}+\|\curl(\BH-\BH_h)\|_{\bzet,\Omega}
			\lesssim
			\omega\|\BE-\BE_h\|_{\ee,\Omega}+\omega\|\BH-\BH_h\|_{\mm,\Omega}+\eta_{\ccurl},
		\end{equation*}
		and \eqref{eq_reliability_LDG} follows since we already estimated
		the $\BLL^2(\Omega)$ terms in Lemma \ref{lemma_aubin_nitsche}.
	\end{proof}
	
\begin{remark}[Asymptotic estimate for smooth solutions]
\label{remark_asymptotic_exactness}
If $\ee$ and $\mm$ are real scalars, we can actually rewrite that
last line of the above proof as
\begin{equation*}
\|\curl(\BE-\BE_h)\|_{\cc,\Omega}+\|\curl(\BH-\BH_h)\|_{\bzet,\Omega}
\leq
\omega\|\BE-\BE_h\|_{\ee,\Omega}+\omega\|\BH-\BH_h\|_{\mm,\Omega}+\eta_{\ccurl},
\end{equation*}
without any hidden constant.
%%%   We will also show later in Lemma \ref{eff_eta_div_LDG} that
%%%   \begin{equation*}
%%%   \eta_{\ddiv}
%%%   \lesssim
%%%   \max_{K\in\CT_h}p_K^{3/2} \left (
%%%   \omega\|\BE-\BE_h\|_{\ee,\Omega}+\omega\|\BH-\BH_h\|_{\mm,\Omega}
%%%   \right )+\osc_{\CT_h}.
%%%   \end{equation*}
As a result, assuming that the solution is sufficiently
smooth (or that the mesh is locally refined) so that the convergence in $\BLL^2$ norm
happens faster than in the energy norm, we asymptotically have
\begin{equation*}
\enorm{(\BE-\BE_h,\BH-\BH_h)} \leq \eta_{\ccurl} = \eta.
\end{equation*}
This behavior is observed several times in the numerical examples reported hereafter.
\end{remark}

\subsection{Efficiency}

We now show that the estimator proposed for DG discretizations
is efficient. Classically, the proofs of this section hinge on
the ``bubble'' functions introduced in Section \ref{bubbles}.

We start by showing an upper bound for the ``divergence'' parts of the estimator,
namely $\eta_{\ddiv,\ee,K}$ and $\eta_{\ddiv,\mm,K}$.

\begin{lemma}
\label{eff_eta_div_LDG}
We have
\begin{equation*}
\eta_{\ddiv,\ee,K}
\lesssim
p_K^{3/2} \omega\|\BE-\BE_h\|_{\ee,\tK}
+
\osc_{\CT_{K,h}},
\qquad
\eta_{\ddiv,\mm,K}
\lesssim
p_K^{3/2}\omega\|\BH-\BH_h\|_{\mm,\tK}
+\osc_{\CT_{K,h}}
\end{equation*}
for all $K \in \CT_h$.
\end{lemma}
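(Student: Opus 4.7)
The plan is a standard Verfürth-style bubble argument: I treat $\eta_{\ddiv,\ee,K}$ in detail; the magnetic counterpart $\eta_{\ddiv,\mm,K}$ follows by the exact same scheme applied to the identity $\div(\mm\BH)=0$ obtained by taking the divergence of the second equation in \eqref{eq_maxwell_strong_first_order}, with the boundary condition $\mm\BH\cdot\bn=0$ on $\partial\Omega$ accounting for the fact that $\eta_{\ddiv,\mm,K}$ includes the full boundary $\partial K$. The starting identities come from the strong form \eqref{eq_maxwell_strong_first_order}: $\div(\BJ-i\omega\ee\BE)=0$ in $\Omega$, and $\jmp{\ee(\BE-\BEi)}\cdot\bn=0$ on each interior face by the TF-SF jump condition. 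Subtracting the discrete version, the identities
\begin{equation*}
\div(\BJ-i\omega\ee\BE_h)=i\omega\,\div(\ee(\BE-\BE_h)),
\qquad
\jmp{\ee(\BE_h-\BEi)}\cdot\bn=-\jmp{\ee(\BE-\BE_h)}\cdot\bn
\end{equation*}
reduce both pieces of $\eta_{\ddiv,\ee,K}$ to quantities controlled by the error.

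For the volumetric residual I would set $r_h\eq\varrho_h-i\omega\,\div(\ee\BE_h)$, which is piecewise polynomial, and split $\|\div(\BJ-i\omega\ee\BE_h)\|_K\leq\|\div\BJ-\varrho_h\|_K+\|r_h\|_K$; weighted by $\frac{1}{\sqrt{\eemK}}(h_K/p_K)$ the first term is exactly $\osc_K(\BJ)/p_K^{3/2}$. For the polynomial remainder, \eqref{eq_norm_bubble} gives $\|r_h\|_K\lesssim p_K\|b_K^{1/2}r_h\|_K$; writing $\|b_K^{1/2}r_h\|_K^2=(r_h,b_Kr_h)_K$, plugging in $r_h=(\varrho_h-\div\BJ)+i\omega\,\div(\ee(\BE-\BE_h))$, and integrating the divergence piece by parts (valid since $b_Kr_h\in H^1_0(K)$) introduces the term $\omega(\ee(\BE-\BE_h),\grad(b_Kr_h))_K$. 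The inverse estimate \eqref{eq_inv_bubble} controls $\|\grad(b_Kr_h)\|_K\lesssim(p_K/h_K)\|b_K^{1/2}r_h\|_K$, and after cancellation I obtain a volume contribution bounded by $p_K\omega\|\BE-\BE_h\|_{\ee,K}+\osc_K(\BJ)$, which is well below $p_K^{3/2}\omega\|\BE-\BE_h\|_{\ee,\tK}+\osc_{\CT_{K,h}}$.

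For the jump term, on each interior face $F\subset\partial K$ I introduce the polynomial $\phi_F\eq\jmp{\ee(\BE_h-\BE^{\rm inc}_h)}\cdot\bn\in\CP_{p_F}(F)$, which differs from $\jmp{\ee(\BE_h-\BEi)}\cdot\bn$ only by the $\BEi$-oscillation on $F\cap\Gi$. I test against the bubble extension $v\eq\LE(\phi_F)\in H^1_0(\tF)$ from \eqref{eq_ext_bubble} (so $v|_F=b_F\phi_F$): using the identity $\jmp{\ee(\BE_h-\BEi)}\cdot\bn=-\jmp{\ee(\BE-\BE_h)}\cdot\bn$ and a patchwise Green's formula on $\tF$, the face integral $\int_F v\,\overline{\phi_F}$ becomes a sum over $K'\in\CT_{F,h}$ of volume integrals of $\ee(\BE-\BE_h)\cdot\overline{\grad v}$ and $\div(\ee(\BE-\BE_h))\,\overline{v}$, plus the $\BEi$-oscillation. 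The gradient integral is handled by $\|\grad v\|_{\tF}\lesssim p_K h_K^{-1/2}\|b_F^{1/2}\phi_F\|_F$, and the divergence integral by $|\div(\ee(\BE-\BE_h))|=\omega^{-1}|\div(\BJ-i\omega\ee\BE_h)|$ together with the element-level bound just established, paired with $\|v\|_{\tF}\lesssim h_K^{1/2}p_K^{-1}\|b_F^{1/2}\phi_F\|_F$. Cancelling $\|b_F^{1/2}\phi_F\|_F$, upgrading via $\|\phi_F\|_F\lesssim p_F\|b_F^{1/2}\phi_F\|_F$, and multiplying by $\frac{\omega}{\sqrt{\eemK}}\sqrt{h_K/p_K}$ produces the face contribution $\lesssim p_K^{3/2}\omega\|\BE-\BE_h\|_{\ee,\tK}+\osc_{\CT_{K,h}}$.

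The principal difficulty is purely book-keeping: tracking how the weights $\sqrt{\eemK},\sqrt{\eeMK}$, the mesh scalings $h_K^{1/2}$, $h_K/p_K$, $\sqrt{h_K/p_K}$, and the $p_K$-powers coming from \eqref{eq_norm_bubble}--\eqref{eq_ext_bubble} combine so that (i) the final prefactor on the error is exactly $p_K^{3/2}\omega$ (the $p_K^{3/2}$ on the face part emerges as $p_F\cdot\sqrt{p_K}$, with the extra $\sqrt{p_K}$ appearing when the estimator weight $\sqrt{h_K/p_K}$ is paired with the $h_K^{1/2}$ from \eqref{eq_ext_bubble}), and (ii) the residual data oscillations match precisely the prescribed $\osc_K(\BJ)$ and $\osc_K(\BEi)$. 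The magnetic case $\eta_{\ddiv,\mm,K}$ is strictly parallel, with boundary faces $F\subset\partial\Omega$ treated on a one-element patch, using that $\mm\BH\cdot\bn=0$ on $\partial\Omega$ makes $\jmp{\mm(\BH-\BHi)}\cdot\bn=-\jmp{\mm(\BH-\BH_h)}\cdot\bn$ valid there as well.
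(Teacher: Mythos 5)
Your proposal is correct and follows essentially the same route as the paper's proof: an element-bubble argument with integration by parts against $\div(\BJ-i\omega\ee\BE)=0$ for the volume residual (splitting off $\div\BJ-\varrho_h$ as oscillation), followed by a face-bubble extension via $\LE$ with a patchwise Green's formula that reduces the normal-jump term to the already-bounded volume residual, the error, and the $\BEi$-oscillation. The weight/power bookkeeping you describe matches how the paper assembles \eqref{eta_div_K} and \eqref{eta_div_F} into the stated bound.
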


\begin{proof}
For the sake of readability, we make a slight abuse of notation in the proof, and set
$\jmp{\BEi_h}_F \eq \BEi_h$ if $F \subset \Gi$ and $\jmp{\BEi_h}_F = 0$ for the remaining faces.

Let $K \in \CT_h$ and $v_K \eq b_K\div(\BJ_h-i\omega\ee\BE_h)$. After integration by parts,
we have
\begin{align*}
\|b_K^{1/2}\div(\BJ_h-i\omega\ee\BE_h)\|_K^2&=(\div(\BJ_h-i\omega\ee\BE_h),v_K)_K
\\
&=
-(i\omega\ee(\BE-\BE_h),\grad v_K)_K-(\div(\BJ-\BJ_h),v_K)_K
\\
&\leq
\omega\|\ee(\BE-\BE_h)\|_K\|\grad v_K\|_K
+
\|b_K^{1/2}\div(\BJ-\BJ_h)\|_K\|b_K^{1/2}\div(\BJ_h-i\omega\ee\BE_h)\|_K.
\end{align*}
Using \eqref{eq_norm_bubble} and \eqref{eq_inv_bubble}, it follows that
\begin{align}\label{eta_div_K}
\frac{1}{\sqrt{\varepsilon_{\tK,\min}}}
\frac{h_K}{p_K}\|\div(\BJ-i\omega\ee\BE_h)\|_K
\lesssim
p_K\omega\|\BE-\BE_h\|_{\ee,K}
+
\frac{p_K}{\sqrt{\varepsilon_{\tK,\min}}}\frac{h_K}{p_K}\|\div(\BJ-\BJ_h)\|_K.
\end{align}

On the other hand, for $F \in \CF_h^{\rm i} \cap \CF_K$, we set
$v_F \eq \LE(\omega\jmp{\ee(\BE_h-\BEi_h)} \cdot \bn_F)$. Since
$i\omega\div(\ee\BE)=\div\BJ$ and $b_F = 0$ on $\partial \tF$, we have
\begin{align*}
\omega^2\|b_F^{1/2}\jmp{\ee(\BE_h-\BE_h^{\rm inc})} \cdot \bn_F\|_F^2
&=
\omega(\jmp{\ee(\BE_h-\BE)}\cdot\bn_F,v_F)_F
+
\omega(\jmp{\ee(\BEi-\BE_h^{\rm inc})}\cdot\bn_F,v_F)_F,
\end{align*}
and
\begin{align*}
\omega(\jmp{\ee(\BE_h-\BE)}\cdot\bn_F,v_F)_F
&=
|(\div(i\omega\ee\BE_h),v_F)_{\CT_{F,h}}+(i\omega\ee\BE_h,\grad v_F)_{\CT_{F,h}}|
\\
&=
|-(\div(\BJ-i\omega\ee\BE_h),v_F)_{\CT_{F,h}}-(i\omega\ee(\BE-\BE_h),\grad v_F)_{\CT_{F,h}}|
\\
&\leq
\|\div(\BJ-i\omega\ee\BE_h)\|_{\CT_{F,h}}\|v_F\|_{\tF}
+
\omega\|\ee(\BE-\BE_h)\|_{\tF}\|\grad v_F\|_{\tF}.
\end{align*}
Then, thanks to \eqref{eq_ext_bubble}, we bound the terms depending on $v_F$,
and using both \eqref{eq_norm_bubble} and \eqref{eq:quasiunif}, we get
\begin{align}
\label{eta_div_F}
&\frac{\omega}{\sqrt{\varepsilon_{\tK,\min}}}
\sqrt{\frac{h_K}{p_K}}
\|\jmp{\ee(\BE_h-\BEi_h)} \cdot \bn_F\|_F\\
&\lesssim
\frac{\omega}{\sqrt{\varepsilon_{\tK,\min}}}
\sqrt{\frac{h_K}{p_K}}p_F\|b_F^{1/2}\jmp{\ee(\BE_h-\BEi_h)} \cdot \bn_F\|_F
\nonumber
\\
&\lesssim
\frac{p_K^{1/2}}{\sqrt{\varepsilon_{\tK,\min}}}
\frac{h_K}{p_K}
\|\div(\BJ-i\omega\ee\BE_h)\|_{\CT_{F,h}}
+
p_K^{1/2}\omega\|\BE-\BE_h\|_{\ee,\tF}+\osc_{\CT_{F,h}}(\BEi).
\nonumber
\end{align}
%and can write that
%\begin{align}
%\label{eta_div_F}
%&\frac{\omega}{\sqrt{\varepsilon_{\tK,\min}}}
%\sqrt{\frac{h_K}{p_K}}
%\|\jmp{\ee(\BE_h-\BEi)} \cdot \bn_F\|_F
%\\
%\nonumber
%&\lesssim
%\frac{p_K^{1/2}}{\sqrt{\varepsilon_{\tK,\min}}}
%\frac{h_K}{p_K}
%\|\div(\BJ-i\omega\ee\BE_h)\|_{\CT_{F,h}}
%+
%p_K^{\pvr{1/2}}\omega\|\BE-\BE_h\|_{\ee,\tF},
%+
%\osc_{\CT_{F,h}}(\BEi).
%\end{align}

The bound associated with $\eta_{\ddiv,\ee,K}$ follows from \eqref{eta_div_K} and
\eqref{eta_div_F}. For the sake of shortness, we do not write the proofs for
$\eta_{\ddiv,\mm,K}$, as it follows from the same arguments.
\end{proof}
	
We now turn to the ``rotation'' parts of the estimator, which require increased attention.
	
\begin{lemma}
\label{eff_eta_curl_LDG}
We have
\begin{subequations}
\begin{equation}
\label{eq_eff_eta_curl_e_LDG}
\eta_{\ccurl,\ee,K}
\lesssim
\omega\|\BE-\BE_h\|_{\ee,K}
+
p_K\!\left(1+\frac{\omega h_{K}}{p_K c_{\tK,\min}}\right)\!\enormH{\BH-\BH_h}{\CT_{K,h}}
+\osc_{\CT_{F,h}}(\BHi)
\end{equation}
and
\begin{equation}
\label{eq_eff_eta_curl_m_LDG}
\eta_{\ccurl,\mm,K}
\lesssim
p_K\!\left(1+\frac{\omega h_K}{p_K c_{\tK,\min}}\right)
\!\enormE{\BE-\BE_h}{\CT_{K,h}}
+
\omega\|\BH-\BH_h\|_{\mm,K}
+\osc_{\CT_{F,h}}(\BEi)
\end{equation}
\end{subequations}
for all $ K\in\CT_h $.
\end{lemma}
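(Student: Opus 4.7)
The plan is to treat the volumetric and face contributions of $\eta_{\ccurl,\ee,K}$ separately, both driven by the pointwise identity
\begin{equation*}
\BJ - i\omega\ee\BE_h + \curl\BH_h = i\omega\ee(\BE - \BE_h) - \curl(\BH - \BH_h),
\end{equation*}
which follows from subtracting the first Maxwell equation from the discrete one. The volumetric residual does not need a bubble-function argument: the identity above is literal, so a triangle inequality combined with the continuity $\|\ee\bu\|\leq\eeMK\|\bu\|$ and the norm equivalence $\|\bu\|/\sqrt{\eeMK}\sim\|\bu\|_{\bzet,K}$ (up to contrast) directly yields
\begin{equation*}
\frac{1}{\sqrt{\eeMK}}\|\BJ - i\omega\ee\BE_h + \curl\BH_h\|_K \lesssim \omega\|\BE - \BE_h\|_{\ee,K} + \enormH{\BH - \BH_h}{K}.
\end{equation*}

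For the face term $\sqrt{\mmMK}\,\omega\sqrt{h_K/p_K}\,\|\jmp{\BH_h - \BHi}\times\bn_F\|_F$ on a face $F \in \CF_K\setminus\partial\Omega$, I would set $\psi_F = \jmp{\BH_h - \BHi_h}\times\bn_F$ and $\bv_F = \LE(\psi_F)\in H^1_0(\tF)^3$ from the extension operator of Section \ref{bubbles}. Writing $\|b_F^{1/2}\psi_F\|_F^2 = \langle\psi_F,\bv_F\rangle_F$, I would first replace $\BHi_h$ by $\BHi$ at the cost of an oscillation contribution $\|\BHi - \BHi_h\|_F\|\bv_F\|_F$. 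Since $\BH\in\BCH(\ccurl,\Omega)$ has tangentially continuous traces and $\jmp{\BHi}\times\bn_F = 0$ across interior non-$\Gi$ faces by the TF-SF construction of Section \ref{section_tfsf}, one has $\jmp{\BH_h - \BHi}\times\bn_F = -\jmp{\BH - \BH_h}\times\bn_F$. Elementwise integration by parts over the two simplices $K_\pm$ sharing $F$ (using that $\bv_F$ vanishes on $\partial\tF$) then gives
\begin{equation*}
\langle\jmp{\BH - \BH_h}\times\bn_F,\bv_F\rangle_F = (\curl(\BH - \BH_h),\bv_F)_{\tF} - (\BH - \BH_h,\curl\bv_F)_{\tF}.
\end{equation*}
Cauchy--Schwarz, the extension bounds \eqref{eq_ext_bubble}, the polynomial inverse estimate \eqref{eq_norm_bubble} and the local quasi-uniformity \eqref{eq:quasiunif} combine to control $\|\psi_F\|_F$ by $h_K^{1/2}\|\curl(\BH-\BH_h)\|_{\tF}$, $p_K^2 h_K^{-1/2}\|\BH-\BH_h\|_{\tF}$, and a polynomial multiple of $\|\BHi - \BHi_h\|_F$.

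Multiplying by $\sqrt{\mmMK}\,\omega\sqrt{h_K/p_K}$ and recognising that $\omega h_K\sqrt{\eeMK\mmMK} = \omega h_K/c_{\tK,\min}$, and then converting the Euclidean norms into the weighted energy norms $\|\cdot\|_{\mm}$ and $\|\cdot\|_{\bzet}$ via the coefficient contrast $\LC_{\mathrm c}$, isolates the prefactor $p_K(1 + \omega h_K/(p_K c_{\tK,\min}))$ in front of $\enormH{\BH - \BH_h}{\tK}$, while the last piece matches the definition of $\osc_K(\BHi)$ (with room to spare in the powers of $p_K$). Estimate \eqref{eq_eff_eta_curl_m_LDG} follows from the dual argument built on the identity $i\omega\mm\BH_h + \curl\BE_h = -i\omega\mm(\BH - \BH_h) - \curl(\BE - \BE_h)$, with the tangential jump of $\BE - \BEi$ vanishing across interior faces thanks to $\BE \in \BCH_0(\ccurl,\Omega)$ (the boundary faces being handled by the essential boundary condition $\BE\times\bn = 0$). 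The main obstacle is the careful bookkeeping of the weights: tracking the interplay of $\sqrt{\mmMK}$, $\omega$, $h_K/p_K$, and the bubble inverse estimates so that all coefficient dependence collapses precisely into the $c_{\tK,\min}$ factor in the advertised prefactor, and verifying that the face-oscillation byproduct sits below $\osc_{\CT_{F,h}}(\BHi)$.
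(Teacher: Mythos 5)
Your proposal is correct and follows essentially the same route as the paper's proof: the volume residual is bounded by a direct triangle inequality on the pointwise identity (no bubble needed), and the tangential-jump term is handled by applying the extension operator $\LE$ to the jump, swapping $\BHi_h$ for $\BHi$ at the price of the oscillation term, invoking the tangential continuity of the exact fields (and the prescribed jump on $\Gi$), integrating by parts elementwise over $\CT_{F,h}$, and collecting the weights via \eqref{eq_norm_bubble}, \eqref{eq_ext_bubble} and \eqref{eq:quasiunif} into the factor $p_K\bigl(1+\omega h_K/(p_K c_{\tK,\min})\bigr)$. The only difference is cosmetic: you detail the $\BH$-jump estimate and appeal to symmetry for the other, whereas the paper does the reverse.
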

	
\begin{proof}
We employ the same notation for $\jmp{\BEi_h}$ as in the previous proof, and we
only detail the proof of \eqref{eq_eff_eta_curl_m_LDG} since 
\eqref{eq_eff_eta_curl_e_LDG} is established similarly, %the formulation being ``symmetric''
given the ``symmetry" of the formulation with respect to the electric and magnetic fields.
We have
\begin{equation*}
i\omega\mm\BH_h+\curl\BE_h=-i\omega\mm(\BH-\BH_h)-\mm\cc\curl(\BE-\BE_h).
\end{equation*}
As a result, it holds that
\begin{equation*}
\|i\omega\mm\BH_h+\curl\BE_h\|_K
\leq
\sqrt{\mu_{K,\max}}\omega\|\BH-\BH_h\|_{\mm,K}
+
\frac{\mu_{K,\max}}{\sqrt{\mu_{K,\min}}}\|\curl(\BE-\BE_h)\|_{\cc,K}
\end{equation*}
and
\begin{equation}
\label{tmp_eff_LDG_volume}
\frac{1}{\sqrt{\mu_{\tK,\max}}}\|i\omega\mm\BH_h+\curl\BE_h\|_K
\lesssim
\omega\|\BH-\BH_h\|_{\mm,K}+\|\curl(\BE-\BE_h)\|_{\cc,K}.
\end{equation}
		
On the other hand, for a face $F\in\CF_K$, we set
$\bw_F \eq \LE(\omega\jmp{\BE_h-\BEi_h}\times\bn_F)$.
With this notation, since
$b_F = 0$ on $\partial \tF$ and $\BE \in \BCH_0(\ccurl,\Omega)$, we have
\begin{align*}
&\omega^2\|b_F^{1/2}\jmp{\BE_h-\BEi_h}\times\bn_F\|_F^2\\
&=
|\omega(\jmp{\BE_h-\BEi}\times\bn_F,\bw_F)_F
+
\omega(\jmp{\BEi-\BEi_h}\times\bn_F,\bw_F)_F|
\\
&=
|\omega
(\BE-\BE_h,\curl\bw_F)_{\CT_{F,h}}+(\curl(\BE-\BE_h),\bw_F)_{\CT_{F,h}}
+
\omega(\jmp{\BEi-\BEi_h}\times\bn_F,\bw_F)_F|
\\
&\leq
\omega
|(\BE-\BE_h,\curl\bw_F)_{\CT_{F,h}}
+
(\curl(\BE-\BE_h),\bw_F)_{\CT_{F,h}}|
+
\osc_{\CT_{F,h}}(\BEi).
\end{align*}
Then, it follows from \eqref{eq_norm_bubble}, \eqref{eq_ext_bubble} and \eqref{eq:quasiunif} that
\begin{align}
\sqrt{\varepsilon_{\tK,\max}}\omega\sqrt{\frac{h_K}{p_K}}
\|\jmp{\BE_h-\BEi_h}\times\bn_F\|_F
&\lesssim
\sqrt{\varepsilon_{\tK,\max}}
\omega\sqrt{\frac{h_K}{p_K}}p_F\|b_F^{1/2}\jmp{\BE_h-\BEi_h}\times\bn_F\|_F\nonumber
\\
&\lesssim
p_K\!\left(1+\frac{\omega h_K}{p_K c_{\tF,\min}}\right)
\!\enormE{\BE-\BE_h}{\CT_{F,h}}
+\osc_{\CT_{F,h}}(\BEi).\label{tmp_eff_LDG_surface}
\end{align}
%and using $\osc_{\CT_{F,h}}(\BEi)$ again, we have
%\begin{multline}
%\label{tmp_eff_LDG_surface}
%\sqrt{\varepsilon_{\tK,\max}}\omega\sqrt{\frac{h_K}{p_K}}
%\|\jmp{\BE_h-\BEi}\times\bn_F\|_F
%\lesssim
%\\
%p_K\left(1+\frac{\omega h_K}{p_K c_{\tF,\min}}\right)
%\!\enormE{\BE-\BE_h}{\CT_{F,h}}
%+\osc_{\CT_{F,h}}(\BEi).
%\end{multline}

Finally, \eqref{eq_eff_eta_curl_m_LDG} follows from the definition of
$\eta_{\ccurl,\mm,K}$, \eqref{tmp_eff_LDG_volume} and \eqref{tmp_eff_LDG_surface}.
\end{proof}
	
Our key efficiency estimate is a direct consequence of Lemmas
\ref{eff_eta_div_LDG} and \ref{eff_eta_curl_LDG}.

\begin{theorem}[Efficiency]
\label{eff_LDG}
The estimate
\begin{equation*}
\eta_K
\lesssim
p_K^{3/2}\!\left(1+\frac{\omega h_{K}}{p_K c_{\tK,\min}}\right)
\!\enormEH{(\BE-\BE_h,\BH-\BH_h)}{\CT_{K,h}}
+
\osc_{\CT_{K,h}}
\end{equation*}
holds true for all $K \in \CT_h$.
\end{theorem}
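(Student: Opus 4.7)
The plan is to combine the two preceding lemmas termwise in a fairly mechanical way, since Theorem \ref{eff_LDG} is essentially a consolidated restatement of Lemmas \ref{eff_eta_div_LDG} and \ref{eff_eta_curl_LDG}. The starting point is the definition
\begin{equation*}
\eta_K^2 = \eta_{\ddiv,\ee,K}^2 + \eta_{\ddiv,\mm,K}^2 + \eta_{\ccurl,\ee,K}^2 + \eta_{\ccurl,\mm,K}^2,
\end{equation*}
so that, taking square roots, it suffices to bound each of the four contributions by the right-hand side of the claimed estimate.

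Next, I would treat the ``divergence'' contributions using Lemma \ref{eff_eta_div_LDG}, which directly provides
\begin{equation*}
\eta_{\ddiv,\ee,K}
\lesssim
p_K^{3/2} \omega \|\BE-\BE_h\|_{\ee,\tK} + \osc_{\CT_{K,h}},
\qquad
\eta_{\ddiv,\mm,K}
\lesssim
p_K^{3/2} \omega \|\BH-\BH_h\|_{\mm,\tK} + \osc_{\CT_{K,h}}.
\end{equation*}
Since $\omega \|\BE-\BE_h\|_{\ee,\tK} \leq \enormE{\BE-\BE_h}{\tK} \leq \enormEH{(\BE-\BE_h,\BH-\BH_h)}{\CT_{K,h}}$, and similarly for the $\BH$ term, both divergence parts are controlled by $p_K^{3/2} \enormEH{(\BE-\BE_h,\BH-\BH_h)}{\CT_{K,h}} + \osc_{\CT_{K,h}}$. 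Note that $1 \leq 1 + \omega h_K/(p_K c_{\tK,\min})$, so the extra factor in the target estimate is harmless here.

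For the ``rotation'' contributions I would apply Lemma \ref{eff_eta_curl_LDG}. Estimate \eqref{eq_eff_eta_curl_e_LDG} yields
\begin{equation*}
\eta_{\ccurl,\ee,K}
\lesssim
\omega\|\BE-\BE_h\|_{\ee,K}
+ p_K \!\left(1+\frac{\omega h_K}{p_K c_{\tK,\min}}\right)\!
\enormH{\BH-\BH_h}{\CT_{K,h}}
+ \osc_{\CT_{K,h}},
\end{equation*}
and \eqref{eq_eff_eta_curl_m_LDG} provides the analogous bound with the roles of $\BE$ and $\BH$ swapped. Bounding $p_K \leq p_K^{3/2}$ (valid since $p_K \geq 1$) and using $\omega\|\BE-\BE_h\|_{\ee,K} \leq \enormEH{(\BE-\BE_h,\BH-\BH_h)}{\CT_{K,h}}$ (again absorbing into the larger prefactor $(1+\omega h_K/(p_K c_{\tK,\min}))$), both curl parts are controlled by the target right-hand side.

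There is no real obstacle: the argument is purely algebraic bookkeeping, relying on elementary dominations between the various prefactors and the observation that the ``energy norm'' $\enormEH{\cdot}{\CT_{K,h}}$ simultaneously controls the $\omega$-weighted $\BLL^2$-norms and the weighted curl norms appearing in the two lemmas. Summing the four squared bounds and taking a square root, with $\osc_{\CT_{K,h}}$ absorbing the individual oscillation terms (the faces touched are all contained in $\CT_{K,h}$ by definition), delivers precisely the claimed estimate.
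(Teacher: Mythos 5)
Your proposal is correct and follows exactly the route the paper intends: the paper states Theorem \ref{eff_LDG} as a direct consequence of Lemmas \ref{eff_eta_div_LDG} and \ref{eff_eta_curl_LDG} and gives no further proof, and your write-up simply supplies the elementary bookkeeping (bounding $\omega\|\cdot\|_{\ee,\tK}$ and $\omega\|\cdot\|_{\mm,\tK}$ by the energy norm, using $p_K\leq p_K^{3/2}$ and $1\leq 1+\omega h_K/(p_K c_{\tK,\min})$, and absorbing the face-patch oscillation terms into $\osc_{\CT_{K,h}}$) that the authors leave implicit.
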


\section{Numerical experiments}
\label{sec_numerics}

\subsection{Settings}

For the sake of simplicity, we introduce the frequency $\nu \eq \omega/2\pi$,
and we will assume that $\varepsilon_0 = \mu_0 = 1$ in all the examples below.
$\widetilde \xi$ and $\widetilde \eta$ respectively denote the relative error and estimators
that have been scaled by the norm of the reference solution, i.e.
\begin{equation*}
\widetilde \xi
\eq
\frac{\enorm{(E-E_h,\BH-\BH_h)}}{\enorm{(E,\BH)}},
\qquad
\widetilde \eta \eq \frac{\eta}{\enorm{(E,\BH)}}.
\end{equation*}

\subsubsection{Structured meshes}
\label{section_structured_mesh}

We often employ ``structured'' meshes in the following numerical examples.
By this, we mean that the domain is (up to a translation) a square $\Omega \eq (-\ell,\ell)^2$
and that it is first partitioned into $N \times N$ identical squares. Each of these squares is then
subdivided into four triangles by joining the barycenter with each face. The resulting mesh
counts $4N^2$ triangular elements, with mesh size $h = 2\ell/N$.

\subsubsection{Unstructured meshes}
\label{section_unstructured_mesh}

We also use ``unstructured'' meshes generated with a software package.
Specifically, we employ {\tt mmg2D} \cite{mmg3d} with the options
{\tt -optim}, {\tt -ar=0} and {\tt -hmax=}$h$ to generate a mesh of size $h$.

\subsubsection{Perfectly matched layers}
\label{section_pml}

Most of our experiments employ perfectly matched layers (PML)
to mimic an infinite propagation medium. This approach is standard
\cite{berenger_1994,monk_2003a}, and we proceed as follows. The computational domain will always
be (up to a translation) a square $(-\ell,\ell)^2$, and the coefficients
$\varepsilon$ and $\mm$ will take the values $1$ and $\BI$ in a neighborhood
of the square's boundary. This original square is extended into a larger square
$(-(\ell+\ell_{\rm PML}),\ell+\ell_{\rm PML})^2$ for a fixed $\ell_{\rm PML} > 0$,
and the coefficients $\varepsilon$ and $\mm$ are artificially modified outside
$(-\ell,\ell)^2$ as
\begin{equation*}
\varepsilon \eq 1/\nu_1\nu_2,
\qquad
\mm \eq \left (
\begin{array}{cc}
\nu_2/\nu_1 & 0
\\
0 & \nu_1/\nu_2,
\end{array}
\right ),
\end{equation*}
where $\nu_j(\bx) = 1+i\chi_{|\bx_j| > \ell}$ and $\chi_{|\bx_j| > \ell}$
is the characteristic function of the set $\{\bx \in \mathbb R^2; \; |\bx_j| > \ell\}$,
for $j=1$ or $2$.

\subsubsection{$h$-adaptivity}
\label{section_h_adaptivity}

In several experiments, we consider $h$-adaptive iterative refinements,
and our strategy combines D\"orfler's marking \cite{dorfler_1996a} with the newest-vertex
bisection \cite{binev_dahmen_devore_2004a}. Specifically, once the estimator
$\{\eta_K\}_{K \in \CT_h}$ has been computed, we order the elements $\{K_j\}_{j=1}^{|\CT_h|}$
in such a way that $\eta_{K_{j+1}} \leq \eta_{K_j}$, and we then select the smallest number
$n$ of elements such that
\begin{equation*}
\sum_{j=1}^n \eta_{K_{j}}^2 \geq \theta \eta^2,
\end{equation*}
where $\theta \eq 0.05$. The elements $\{K_j\}_{j=1}^n$ are then refined using
the newest-vertex bisection, starting from $K_1$ and finishing with $K_n$.

\subsubsection{$hp$-adaptivity}
\label{section_hp_adaptivity}

We will also consider $hp$-adaptive refinements. In this case, the elements are
still marked using D\"orfler marking as above, and we employ an algorithm based on
\cite{melenk_wohlmuth_2001a} to decide between $h$ and $p$ refinements. Specifically,
if an element $K$ with diameter $h_{\rm old}$, order $p_{\rm old}$ and estimator
$\eta_{\rm old}$ has been refined into new elements $\{\kappa_j\}_{j=1}^n$ with diameters
$\{h_{\kappa_j}\}_{j=1}^n$ order $\{p_{\kappa_j}\}_{j=1}^n$ and estimators
$\{\eta_{\kappa_j}\}_{j=1}^n$, we define
the ``ideal'' error reduction to be
\begin{equation*}
{\rm red}
=
\left (
\frac{h_{\rm new}}{p_{\rm new}}
\right )^{p_{\rm new}}
\left (\frac{h_{\rm old}}{p_{\rm old}}\right )^{p_{\rm old}},
\end{equation*}
where $h_{\rm new} = \max_j h_{\kappa_j}$ and $p_{\rm new} = \min_j p_{\kappa_j}$.
Then, letting
\begin{equation*}
\eta_{\rm new}^2 \eq \sum_{j=1}^{n} \eta_{\kappa_j}^2,
\end{equation*}
we perform a $p$-refinement if $\eta_{\rm new} \leq {\rm red} \eta_{\rm old}$
and an $h$-refinement otherwise. Once the new $p$-distribution has been obtained,
it is ``smoothed'' to ensure that the polynomial degree of two neighboring only varies
by one through an iterative increase in the degree of neighboring elements when
required. For the first iteration (where we have no history to compute the ideal reduction),
we only employ $p$-refinements.

\subsection{Planewave in free space}

For our first example, we consider the computational domain $\Omega_0 \eq (-1,1)^2$,
that we surround with a PML layer of thickness $\ell_{\rm PML} \eq 0.25$.
The entire domain is thus $\Omega \eq (-(1+\ell_{\rm PML}),1+\ell_{\rm PML})^2$.
We also set the TF region $\Omega_{\rm TF} \eq (-\ell_{\rm TF},\ell_{\rm TF})^2$
with $\ell_{\rm TF} \eq 0.75$. $\varepsilon = 1$ and $\mm = \BI$ in $\Omega_0$
and are modified as explained in Section \ref{section_pml} in $\Omega \setminus \Omega_0$.
We then set $J \eq 0$, and
\begin{equation*}
E_{\rm inc} \eq e^{-i\omega\bd \cdot \bx},
\qquad
\BH_{\rm inc} \eq (i\omega)^{-1} \ccurl E_{\rm inc}
\end{equation*}
where $\bd \eq (\cos \phi, \sin \phi)$ with $\phi \eq \pi/3$. The analytic solution is then
simply given by $E \eq E_{\rm inc} \chi_{\Omega_{\rm TF}}$ and
$\BH \eq \BH_{\rm inc} \chi_{\Omega_{\rm TF}}$, where $\chi_{\Omega_{\rm TF}}$ is
the characteristic function of $\Omega_{\rm TF}$. This experiment employs structured meshes
as defined in Section \ref{section_structured_mesh}. Our goal is to illustrate the behavior
of the estimator as the frequency increases.

Figures \ref{figure_PW_errors} and \ref{figure_PW_effectivities} respectively
report the errors and effectivity indices for different frequencies $\omega$
and polynomial degrees $p$. The observed results are exactly in line with our
theoretical prediction: the error is underestimated for coarse meshes, but
this effect disappears asymptotically. The asymptotic regime is achieved faster
for higher-order methods. Also, the underestimation is more pronounced
for higher frequencies. We finally note that the effectivity indices approach one
as the meshes are refined, which is expected since the solution is regular here.

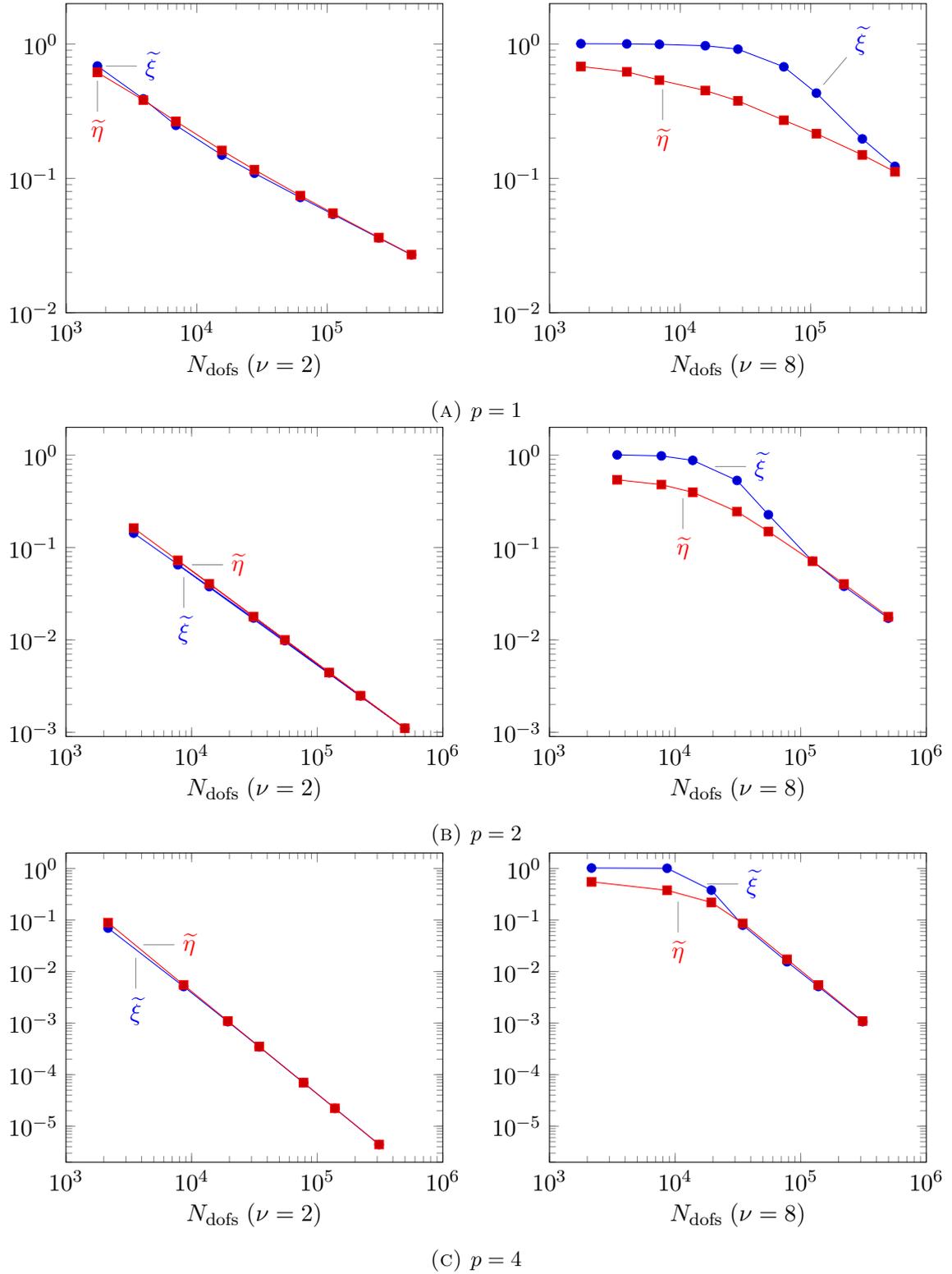
\begin{figure}
\subcaptionbox{$p=1$}{
\begin{minipage}{.49\linewidth}
\begin{tikzpicture}
\begin{axis}%
[
	width=\linewidth,
	xlabel={$N_{\rm dofs} \; (\nu=2)$},
	xmode=log,
	ymode=log,
	ymax=2,
	ymin=1.e-2
]
\addplot table[x=ndofs,y expr=\thisrow{err}/\thisrow{nor}]{figures/PW/upwind/P0/curve_0002.txt}
	node[pos=0.0,pin=0:{$\widetilde \xi$}] {};
\addplot table[x=ndofs,y expr=\thisrow{est}/\thisrow{nor}]{figures/PW/upwind/P0/curve_0002.txt}
	node[pos=0.0,pin=-90:{$\widetilde \eta$}] {};
\end{axis}
\end{tikzpicture}
\end{minipage}
\begin{minipage}{.49\linewidth}
\begin{tikzpicture}
\begin{axis}%
[
	width=\linewidth,
	xlabel={$N_{\rm dofs} \; (\nu=8)$},
	xmode=log,
	ymode=log,
	ymax=2,
	ymin=1.e-2
]
\addplot table[x=ndofs,y expr=\thisrow{err}/\thisrow{nor}]{figures/PW/upwind/P0/curve_0008.txt}
	node[pos=0.7,pin=45:{$\widetilde \xi$}] {};
\addplot table[x=ndofs,y expr=\thisrow{est}/\thisrow{nor}]{figures/PW/upwind/P0/curve_0008.txt}
	node[pos=0.25,pin=-90:{$\widetilde \eta$}] {};
\end{axis}
\end{tikzpicture}
\end{minipage}
}

\subcaptionbox{$p=2$}{
\begin{minipage}{.49\linewidth}
\begin{tikzpicture}
\begin{axis}%
[
	width=\linewidth,
	xlabel={$N_{\rm dofs} \; (\nu=2)$},
	xmode=log,
	ymode=log,
	ymax=2,
	ymin=9.e-4,
	xmin=1.e3,
	xmax=1.e6
]
\addplot table[x=ndofs,y expr=\thisrow{err}/\thisrow{nor}]{figures/PW/upwind/P1/curve_0002.txt}
	node[pos=0.1,pin=-90:{$\widetilde \xi$}] {};
\addplot table[x=ndofs,y expr=\thisrow{est}/\thisrow{nor}]{figures/PW/upwind/P1/curve_0002.txt}
	node[pos=0.1,pin=0:{$\widetilde \eta$}] {};
\end{axis}
\end{tikzpicture}
\end{minipage}
\begin{minipage}{.49\linewidth}
\begin{tikzpicture}
\begin{axis}%
[
	width=\linewidth,
	xlabel={$N_{\rm dofs} \; (\nu=8)$},
	xmode=log,
	ymode=log,
	ymax=2,
	ymin=9.e-4,
	xmin=1.e3,
	xmax=1.e6
]
\addplot table[x=ndofs,y expr=\thisrow{err}/\thisrow{nor}]{figures/PW/upwind/P1/curve_0008.txt}
	node[pos=0.25,pin=0:{$\widetilde \xi$}] {};
\addplot table[x=ndofs,y expr=\thisrow{est}/\thisrow{nor}]{figures/PW/upwind/P1/curve_0008.txt}
	node[pos=0.20,pin=-90:{$\widetilde \eta$}] {};
\end{axis}
\end{tikzpicture}
\end{minipage}
}

\subcaptionbox{$p=4$}{
\begin{minipage}{.49\linewidth}
\begin{tikzpicture}
\begin{axis}%
[
	width=\linewidth,
	xlabel={$N_{\rm dofs} \; (\nu=2)$},
	xmode=log,
	ymode=log,
	ymax=2,
	ymin=2.e-6,
	xmin=1.e3,
	xmax=1.e6
]
\addplot table[x=ndofs,y expr=\thisrow{err}/\thisrow{nor}]{figures/PW/upwind/P3/curve_0002.txt}
	node[pos=0.1,pin=-90:{$\widetilde \xi$}] {};
\addplot table[x=ndofs,y expr=\thisrow{est}/\thisrow{nor}]{figures/PW/upwind/P3/curve_0002.txt}
	node[pos=0.1,pin=0:{$\widetilde \eta$}] {};
\end{axis}
\end{tikzpicture}
\end{minipage}
\begin{minipage}{.49\linewidth}
\begin{tikzpicture}
\begin{axis}%
[
	width=\linewidth,
	xlabel={$N_{\rm dofs} \; (\nu=8)$},
	xmode=log,
	ymode=log,
	ymax=2,
	ymin=2.e-6,
	xmin=1.e3,
	xmax=1.e6
]
\addplot table[x=ndofs,y expr=\thisrow{err}/\thisrow{nor}]{figures/PW/upwind/P3/curve_0008.txt}
	node[pos=0.25,pin=0:{$\widetilde \xi$}] {};
\addplot table[x=ndofs,y expr=\thisrow{est}/\thisrow{nor}]{figures/PW/upwind/P3/curve_0008.txt}
	node[pos=0.20,pin=-90:{$\widetilde \eta$}] {};
\end{axis}
\end{tikzpicture}
\end{minipage}
}

\caption{Planewave example: error and estimator}
\label{figure_PW_errors}
\end{figure}

\begin{figure}
\begin{minipage}{.49\linewidth}
\begin{tikzpicture}
\begin{axis}%
[
	width=\linewidth,
	xlabel={$N_{\rm dofs} \; (p=1)$},
	xmode=log,
	ymax=2,
	ymin=0,
	xmin=1.e3,
	xmax=1.e6
]
\addplot table[x=ndofs,y expr=\thisrow{est}/\thisrow{err}]{figures/PW/upwind/P0/curve_0001.txt}
	node[pos=0.1,pin=90:{$\nu = 1$}] {};

\addplot table[x=ndofs,y expr=\thisrow{est}/\thisrow{err}]{figures/PW/upwind/P0/curve_0002.txt};
\addplot table[x=ndofs,y expr=\thisrow{est}/\thisrow{err}]{figures/PW/upwind/P0/curve_0004.txt};
\addplot table[x=ndofs,y expr=\thisrow{est}/\thisrow{err}]{figures/PW/upwind/P0/curve_0008.txt};

\addplot table[x=ndofs,y expr=\thisrow{est}/\thisrow{err}]{figures/PW/upwind/P0/curve_0016.txt}
	node[pos=0.3,pin=-90:{$\nu=16$}] {};
\end{axis}
\end{tikzpicture}
\end{minipage}
\begin{minipage}{.49\linewidth}
\begin{tikzpicture}
\begin{axis}%
[
	width=\linewidth,
	xlabel={$N_{\rm dofs} \; (p=2)$},
	xmode=log,
	ymax=2,
	ymin=0,
	xmin=1.e3,
	xmax=1.e6
]
\addplot table[x=ndofs,y expr=\thisrow{est}/\thisrow{err}]{figures/PW/upwind/P1/curve_0001.txt}
	node[pos=0.1,pin=90:{$\nu = 1$}] {};

\addplot table[x=ndofs,y expr=\thisrow{est}/\thisrow{err}]{figures/PW/upwind/P1/curve_0002.txt};
\addplot table[x=ndofs,y expr=\thisrow{est}/\thisrow{err}]{figures/PW/upwind/P1/curve_0004.txt};
\addplot table[x=ndofs,y expr=\thisrow{est}/\thisrow{err}]{figures/PW/upwind/P1/curve_0008.txt};

\addplot table[x=ndofs,y expr=\thisrow{est}/\thisrow{err}]{figures/PW/upwind/P1/curve_0016.txt}
	node[pos=0.9,pin=-90:{$\nu=16$}] {};
\end{axis}
\end{tikzpicture}
\end{minipage}

\begin{minipage}{.49\linewidth}
\begin{tikzpicture}
\begin{axis}%
[
	width=\linewidth,
	xlabel={$N_{\rm dofs} \; (p=3)$},
	xmode=log,
	ymax=2,
	ymin=0,
	xmin=1.e3,
	xmax=1.e6
]
\addplot table[x=ndofs,y expr=\thisrow{est}/\thisrow{err}]{figures/PW/upwind/P2/curve_0001.txt}
	node[pos=0.1,pin=90:{$\nu = 1$}] {};

\addplot table[x=ndofs,y expr=\thisrow{est}/\thisrow{err}]{figures/PW/upwind/P2/curve_0002.txt};
\addplot table[x=ndofs,y expr=\thisrow{est}/\thisrow{err}]{figures/PW/upwind/P2/curve_0004.txt};
\addplot table[x=ndofs,y expr=\thisrow{est}/\thisrow{err}]{figures/PW/upwind/P2/curve_0008.txt};

\addplot table[x=ndofs,y expr=\thisrow{est}/\thisrow{err}]{figures/PW/upwind/P2/curve_0016.txt}
	node[pos=0.85,pin=-75:{$\nu=16$}] {};
\end{axis}
\end{tikzpicture}
\end{minipage}
\begin{minipage}{.49\linewidth}
\begin{tikzpicture}
\begin{axis}%
[
	width=\linewidth,
	xlabel={$N_{\rm dofs} \; (p=4)$},
	xmode=log,
	ymax=2,
	ymin=0,
	xmin=1.e3,
	xmax=1.e6
]
\addplot table[x=ndofs,y expr=\thisrow{est}/\thisrow{err}]{figures/PW/upwind/P3/curve_0001.txt}
	node[pos=0.1,pin=90:{$\nu = 1$}] {};

\addplot table[x=ndofs,y expr=\thisrow{est}/\thisrow{err}]{figures/PW/upwind/P3/curve_0002.txt};
\addplot table[x=ndofs,y expr=\thisrow{est}/\thisrow{err}]{figures/PW/upwind/P3/curve_0004.txt};
\addplot table[x=ndofs,y expr=\thisrow{est}/\thisrow{err}]{figures/PW/upwind/P3/curve_0008.txt};

\addplot table[x=ndofs,y expr=\thisrow{est}/\thisrow{err}]{figures/PW/upwind/P3/curve_0016.txt}
	node[pos=0.8,pin=-75:{$\nu=16$}] {};
\end{axis}
\end{tikzpicture}
\end{minipage}

\begin{minipage}{.49\linewidth}
\begin{tikzpicture}
\begin{axis}%
[
	width=\linewidth,
	xlabel={$N_{\rm dofs} \; (p=5)$},
	xmode=log,
	ymax=2,
	ymin=0,
	xmin=1.e3,
	xmax=1.e6
]
\addplot table[x=ndofs,y expr=\thisrow{est}/\thisrow{err}]{figures/PW/upwind/P4/curve_0001.txt}
	node[pos=0.1,pin=90:{$\nu = 1$}] {};

\addplot table[x=ndofs,y expr=\thisrow{est}/\thisrow{err}]{figures/PW/upwind/P4/curve_0002.txt};
\addplot table[x=ndofs,y expr=\thisrow{est}/\thisrow{err}]{figures/PW/upwind/P4/curve_0004.txt};
\addplot table[x=ndofs,y expr=\thisrow{est}/\thisrow{err}]{figures/PW/upwind/P4/curve_0008.txt};

\addplot table[x=ndofs,y expr=\thisrow{est}/\thisrow{err}]{figures/PW/upwind/P4/curve_0016.txt}
	node[pos=0.8,pin=-45:{$\nu=16$}] {};
\end{axis}
\end{tikzpicture}
\end{minipage}
\begin{minipage}{.49\linewidth}
\begin{tikzpicture}
\begin{axis}%
[
	width=\linewidth,
	xlabel={$N_{\rm dofs} \; (p=6)$},
	xmode=log,
	ymax=2,
	ymin=0,
	xmin=1.e3,
	xmax=1.e6
]
\addplot table[x=ndofs,y expr=\thisrow{est}/\thisrow{err}]{figures/PW/upwind/P5/curve_0001.txt}
	node[pos=0.1,pin=90:{$\nu = 1$}] {};

\addplot table[x=ndofs,y expr=\thisrow{est}/\thisrow{err}]{figures/PW/upwind/P5/curve_0002.txt};
\addplot table[x=ndofs,y expr=\thisrow{est}/\thisrow{err}]{figures/PW/upwind/P5/curve_0004.txt};
\addplot table[x=ndofs,y expr=\thisrow{est}/\thisrow{err}]{figures/PW/upwind/P5/curve_0008.txt};

\addplot table[x=ndofs,y expr=\thisrow{est}/\thisrow{err}]{figures/PW/upwind/P5/curve_0016.txt}
	node[pos=0.7,pin=-45:{$\nu=16$}] {};
\end{axis}
\end{tikzpicture}
\end{minipage}

\caption{Planewave example: efficiencies}
\label{figure_PW_effectivities}
\end{figure}
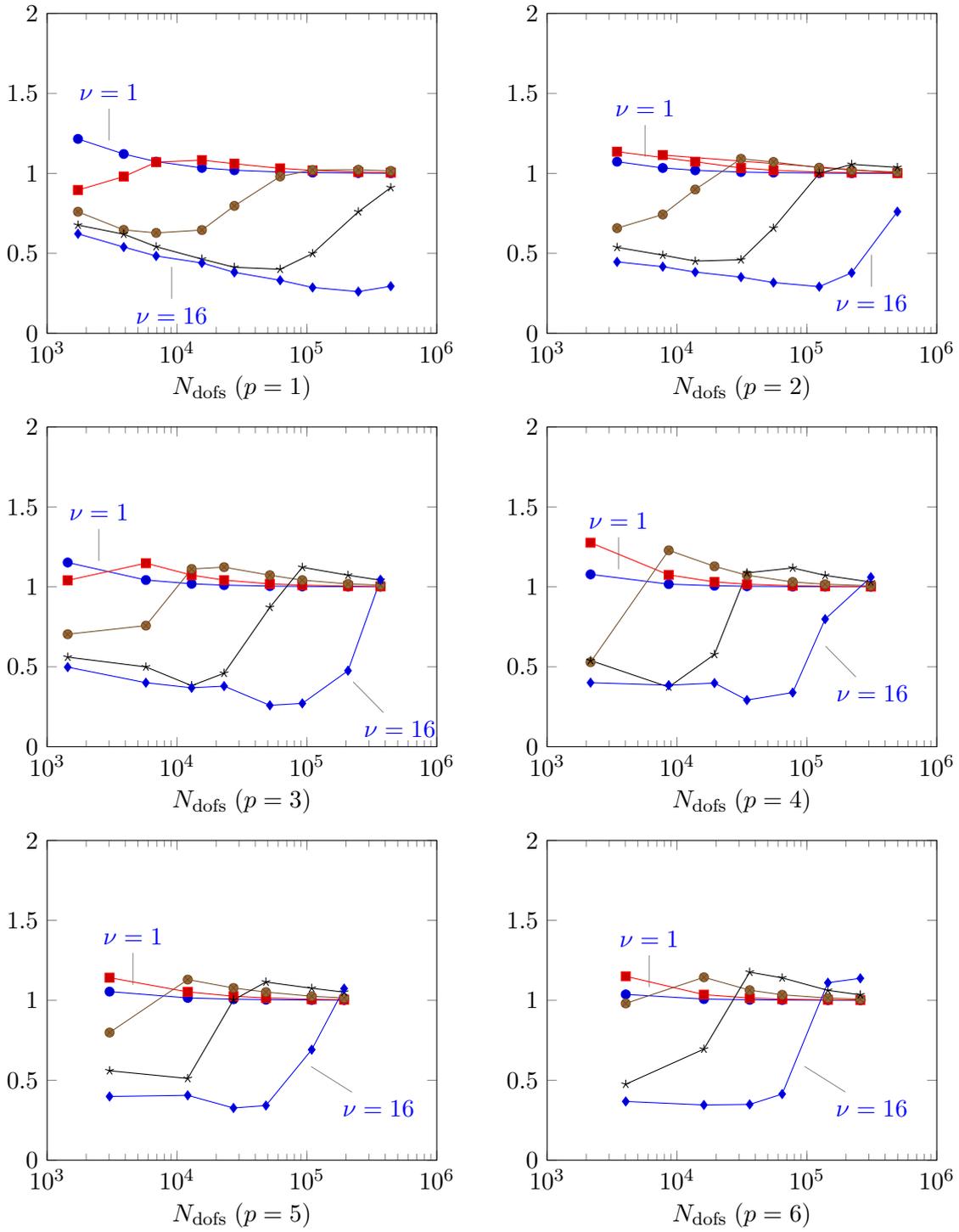

\subsection{Nearly resonant frequencies}

In this example, we consider the unit square $\Omega \eq (0,1)^2$ surrounded by PEC
boundary conditions (i.e., we do not use PMLs). The source term is $J \eq 1$, and
no incident fields are injected. The (semi-)analytical solution
is obtained via Fourier series as 
\begin{equation*}
E(\bx)
\eq
(16i\omega)
\sum_{n,m \text{ odd}}
\frac{1}{nm\pi^2}\frac{1}{(n^2+m^2)\pi^2 - \omega^2}\sin(n\pi \bx_1)\sin(m\pi \bx_2)
\end{equation*}
which we cut at $n,m \leq 500$. The magnetic field is obtained by (analytically) differentiating
$E$. Notice that the solution belongs to $H^3(\Omega)$, but not to $H^4(\Omega)$.

The first goal of this example is to highlight the behavior of the estimator when
approaching a resonance frequency. We focus on the resonance frequency
$\nu_{\rm r} \eq \sqrt{2}/2$, and we consider a sequence of frequencies
\begin{equation*}
\nu_\delta \eq (1+\delta) \frac{\sqrt{2}}{2} = \nu_{\rm r} + \frac{\sqrt{2}}{2}\delta,
\end{equation*}
for decreasing values of $\delta \in \{4^{-r}\}_{r=1}^5$. In contrast to the previous example,
we employ unstructured meshes here. Figure \ref{figure_res_errors} presents the behavior
of the error and the estimator for different values of $\delta$ and $p$ as the mesh is
refined, whereas effectivity indices are given in Figure \ref{figure_res_effectivities}.
The behavior is similar to the one observed in Figures \ref{figure_PW_errors} and
\ref{figure_PW_effectivities} and conforms to our theoretical prediction. Indeed, the
error is underestimated pre-asymptotically, and this effect is amplified when nearing
$\nu_{\rm r}$. The asymptotic range is achieved faster for higher polynomial degrees.
In Figure \ref{figure_res_errors}, we observe the optimal convergence rates for uniform
meshes, namely $N_{\rm dofs}^{\min(p,2)/2}$, given the finite regularity of the solution.
Notice also that the ``suboptimal'' convergence rates for $p=4$ are only seen ``late'' in
the convergence curves, which is in agreement with the regularity splitting results of
\cite{chaumontfrelet_nicaise_2018a}.
We also observed in Figure \ref{figure_res_effectivities} that the effectivity indices
asymptotically approach one for $p=1$ and $p=2$, which is coherent with Remark
\ref{remark_asymptotic_exactness} since $E \in H^3(\Omega)$ but $E \notin H^4(\Omega)$.

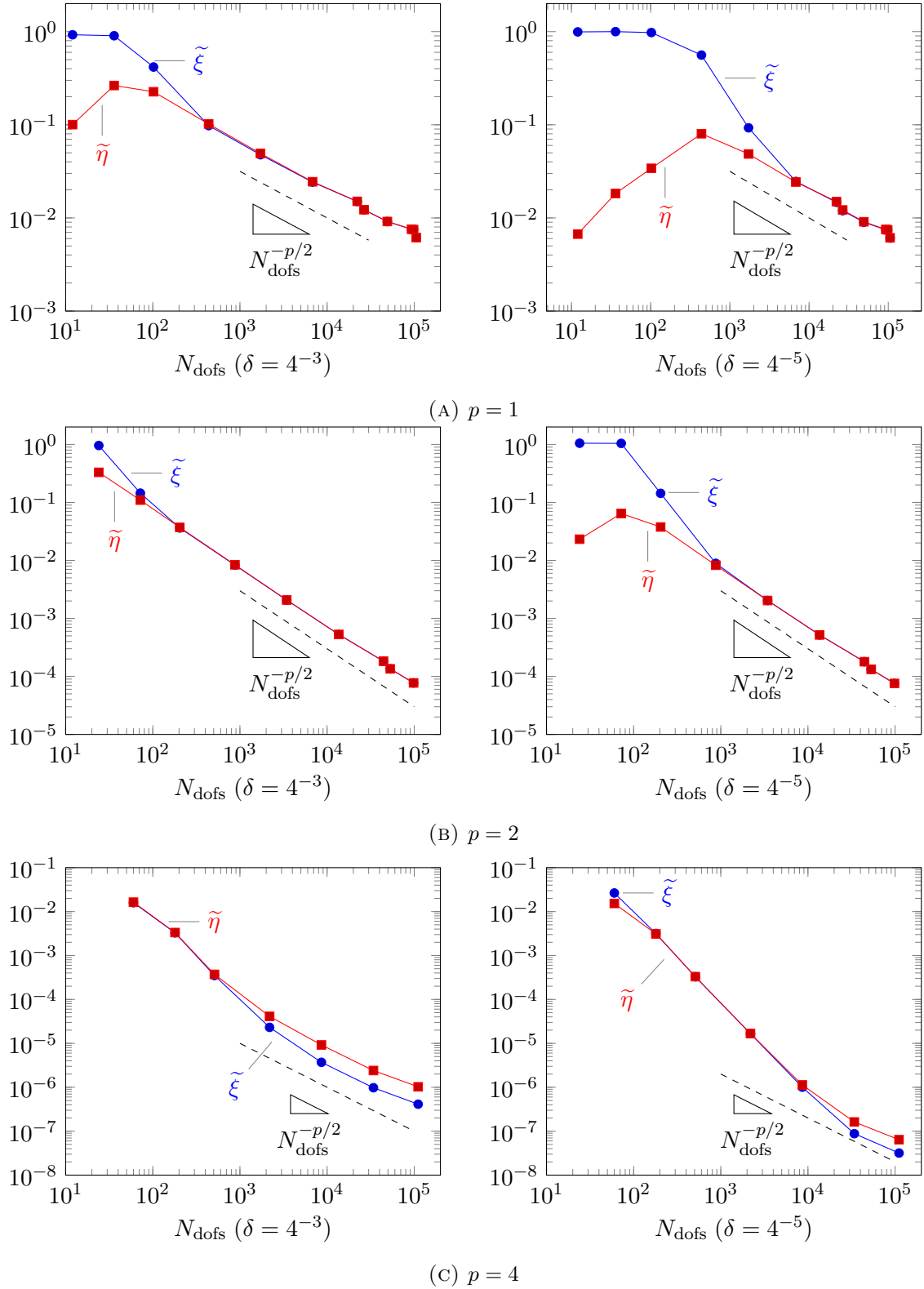
\begin{figure}
\subcaptionbox{$p=1$}{
\begin{minipage}{.49\linewidth}
\begin{tikzpicture}
\begin{axis}%
[
	width=\linewidth,
	xlabel={$N_{\rm dofs} \; (\delta=4^{-3})$},
	xmode=log,
	ymode=log,
	ymax=2,
	ymin=1.e-3,
	xmin=10,
	xmax=2.e5
]
\addplot table[x=ndofs,y expr=\thisrow{err}/52.2918]{figures/res/upwind/P0/curve_0064.txt}
	node[pos=0.2,pin=0:{$\widetilde \xi$}] {};
\addplot table[x=ndofs,y expr=\thisrow{est}/52.2918]{figures/res/upwind/P0/curve_0064.txt}
	node[pos=0.1,pin=-90:{$\widetilde \eta$}] {};

\addplot[black,dashed,domain=1e3:3.e4] {x^(-.5)};
\SlopeTriangle{.5}{-.15}{.25}{-.5}{$N_{\rm dofs}^{-p/2}$}{}

\end{axis}
\end{tikzpicture}
\end{minipage}
\begin{minipage}{.49\linewidth}
\begin{tikzpicture}
\begin{axis}%
[
	width=\linewidth,
	xlabel={$N_{\rm dofs} \; (\delta=4^{-5})$},
	xmode=log,
	ymode=log,
	ymax=2,
	ymin=1.e-3
]
\addplot table[x=ndofs,y expr=\thisrow{err}/830.808]{figures/res/upwind/P0/curve_1024.txt}
	node[pos=0.4,pin=0:{$\widetilde \xi$}] {};
\addplot table[x=ndofs,y expr=\thisrow{est}/830.808]{figures/res/upwind/P0/curve_1024.txt}
	node[pos=0.3,pin=-90:{$\widetilde \eta$}] {};

\addplot[black,dashed,domain=1e3:3.e4] {x^(-.5)};
\SlopeTriangle{.5}{-.15}{.25}{-.5}{$N_{\rm dofs}^{-p/2}$}{}

\end{axis}
\end{tikzpicture}
\end{minipage}
}

\subcaptionbox{$p=2$}{
\begin{minipage}{.49\linewidth}
\begin{tikzpicture}
\begin{axis}%
[
	width=\linewidth,
	xlabel={$N_{\rm dofs} \; (\delta=4^{-3})$},
	xmode=log,
	ymode=log,
	ymax=2,
	ymin=1.e-5,
	xmin=10,
	xmax=2.e5
]
\addplot table[x=ndofs,y expr=\thisrow{err}/52.2918]{figures/res/upwind/P1/curve_0064.txt}
	node[pos=0.1,pin=0:{$\widetilde \xi$}] {};
\addplot table[x=ndofs,y expr=\thisrow{est}/52.2918]{figures/res/upwind/P1/curve_0064.txt}
	node[pos=0.05,pin=-90:{$\widetilde \eta$}] {};

\addplot[black,dashed,domain=1e3:1.e5] {3*x^(-1)};
\SlopeTriangle{.5}{-.15}{.25}{-1}{$N_{\rm dofs}^{-p/2}$}{}

\end{axis}
\end{tikzpicture}
\end{minipage}
\begin{minipage}{.49\linewidth}
\begin{tikzpicture}
\begin{axis}%
[
	width=\linewidth,
	xlabel={$N_{\rm dofs} \; (\delta=4^{-5})$},
	xmode=log,
	ymode=log,
	ymax=2,
	ymin=1.e-5,
	xmin=10,
	xmax=2.e5
]
\addplot table[x=ndofs,y expr=\thisrow{err}/830.808]{figures/res/upwind/P1/curve_1024.txt}
	node[pos=0.25,pin=0:{$\widetilde \xi$}] {};
\addplot table[x=ndofs,y expr=\thisrow{est}/830.808]{figures/res/upwind/P1/curve_1024.txt}
	node[pos=0.20,pin=-90:{$\widetilde \eta$}] {};

\addplot[black,dashed,domain=1e3:1.e5] {3*x^(-1)};
\SlopeTriangle{.5}{-.15}{.25}{-1}{$N_{\rm dofs}^{-p/2}$}{}

\end{axis}
\end{tikzpicture}
\end{minipage}
}

\subcaptionbox{$p=4$}{
\begin{minipage}{.49\linewidth}
\begin{tikzpicture}
\begin{axis}%
[
	width=\linewidth,
	xlabel={$N_{\rm dofs} \; (\delta=4^{-3})$},
	xmode=log,
	ymode=log,
	ymax=1e-1,
	ymin=1.e-8,
	xmin=10.,
	xmax=2.e5
]
\addplot table[x=ndofs,y expr=\thisrow{err}/52.2918]{figures/res/upwind/P3/curve_0064.txt}
	node[pos=0.6,pin=-135:{$\widetilde \xi$}] {};
\addplot table[x=ndofs,y expr=\thisrow{est}/52.2918]{figures/res/upwind/P3/curve_0064.txt}
	node[pos=0.1,pin=0:{$\widetilde \eta$}] {};

\addplot[black,dashed,domain=1e3:1.e5] {.01*x^(-1)};
\SlopeTriangle{.6}{-.1}{.2}{-1}{$N_{\rm dofs}^{-p/2}$}{}

\end{axis}
\end{tikzpicture}
\end{minipage}
\begin{minipage}{.49\linewidth}
\begin{tikzpicture}
\begin{axis}%
[
	width=\linewidth,
	xlabel={$N_{\rm dofs} \; (\delta=4^{-5})$},
	xmode=log,
	ymode=log,
	ymax=1e-1,
	ymin=1.e-8,
	xmin=10.,
	xmax=2.e5
]
\addplot table[x=ndofs,y expr=\thisrow{err}/830.808]{figures/res/upwind/P3/curve_1024.txt}
	node[pos=0.0,pin=0:{$\widetilde \xi$}] {};
\addplot table[x=ndofs,y expr=\thisrow{est}/830.808]{figures/res/upwind/P3/curve_1024.txt}
	node[pos=0.20,pin=-135:{$\widetilde \eta$}] {};

\addplot[black,dashed,domain=1e3:1.e5] {.002*x^(-1)};
\SlopeTriangle{.5}{-.1}{.2}{-1}{$N_{\rm dofs}^{-p/2}$}{}

\end{axis}
\end{tikzpicture}
\end{minipage}
}

\caption{Nearly resonant example: error and estimator}
\label{figure_res_errors}
\end{figure}

\begin{figure}
\begin{minipage}{.49\linewidth}
\begin{tikzpicture}
\begin{axis}%
[
	width=\linewidth,
	xlabel={$N_{\rm dofs} \; (p=1)$},
	xmode=log,
	ymode=log,
	ymax=10,
	ymin=5.e-3,
	xmin=0,
	xmax=2.e5
]
\addplot table[x=ndofs,y expr=\thisrow{est}/\thisrow{err}]{figures/res/upwind/P0/curve_0004.txt}
	node[pos=0.1,pin={[pin distance=.3cm]90:{$\delta = 4^{-1}$}}] {};

\addplot table[x=ndofs,y expr=\thisrow{est}/\thisrow{err}]{figures/res/upwind/P0/curve_0016.txt};
\addplot table[x=ndofs,y expr=\thisrow{est}/\thisrow{err}]{figures/res/upwind/P0/curve_0064.txt};
\addplot table[x=ndofs,y expr=\thisrow{est}/\thisrow{err}]{figures/res/upwind/P0/curve_0256.txt};

\addplot table[x=ndofs,y expr=\thisrow{est}/\thisrow{err}]{figures/res/upwind/P0/curve_1024.txt}
	node[pos=0.5,pin={0:{$\delta = 4^{-5}$}}] {};
\end{axis}
\end{tikzpicture}
\end{minipage}
\begin{minipage}{.49\linewidth}
\begin{tikzpicture}
\begin{axis}%
[
	width=\linewidth,
	xlabel={$N_{\rm dofs} \; (p=2)$},
	xmode=log,
	ymode=log,
	ymax=10,
	ymin=5.e-3,
	xmin=0.,
	xmax=2.e5
]
\addplot table[x=ndofs,y expr=\thisrow{est}/\thisrow{err}]{figures/res/upwind/P1/curve_0004.txt}
	node[pos=0.1,pin={[pin distance=.3cm]90:{$\delta = 4^{-1}$}}] {};

\addplot table[x=ndofs,y expr=\thisrow{est}/\thisrow{err}]{figures/res/upwind/P1/curve_0016.txt};
\addplot table[x=ndofs,y expr=\thisrow{est}/\thisrow{err}]{figures/res/upwind/P1/curve_0064.txt};
\addplot table[x=ndofs,y expr=\thisrow{est}/\thisrow{err}]{figures/res/upwind/P1/curve_0256.txt};

\addplot table[x=ndofs,y expr=\thisrow{est}/\thisrow{err}]{figures/res/upwind/P1/curve_1024.txt}
	node[pos=0.3,pin=0:{$\delta = 4^{-5}$}] {};
\end{axis}
\end{tikzpicture}
\end{minipage}

\begin{minipage}{.49\linewidth}
\begin{tikzpicture}
\begin{axis}%
[
	width=\linewidth,
	xlabel={$N_{\rm dofs} \; (p=3)$},
	xmode=log,
	ymode=log,
	ymax=10,
	ymin=5.e-3,
	xmin=0.,
	xmax=2.e5
]
\addplot table[x=ndofs,y expr=\thisrow{est}/\thisrow{err}]{figures/res/upwind/P2/curve_0004.txt}
	node[pos=0.1,pin={[pin distance=.3cm]90:{$\delta = 4^{-1}$}}] {};

\addplot table[x=ndofs,y expr=\thisrow{est}/\thisrow{err}]{figures/res/upwind/P2/curve_0016.txt};
\addplot table[x=ndofs,y expr=\thisrow{est}/\thisrow{err}]{figures/res/upwind/P2/curve_0064.txt};
\addplot table[x=ndofs,y expr=\thisrow{est}/\thisrow{err}]{figures/res/upwind/P2/curve_0256.txt};

\addplot table[x=ndofs,y expr=\thisrow{est}/\thisrow{err}]{figures/res/upwind/P2/curve_1024.txt}
	node[pos=0.1,pin=-45:{$\delta=4^{-5}$}] {};
\end{axis}
\end{tikzpicture}
\end{minipage}
\begin{minipage}{.49\linewidth}
\begin{tikzpicture}
\begin{axis}%
[
	width=\linewidth,
	xlabel={$N_{\rm dofs} \; (p=4)$},
	xmode=log,
	ymode=log,
	ymax=10,
	ymin=5e-3,
	xmin=0.,
	xmax=2.e5
]
\addplot table[x=ndofs,y expr=\thisrow{est}/\thisrow{err}]{figures/res/upwind/P3/curve_0004.txt}
	node[pos=0.1,pin=90:{$\delta = 4^{-1}$}] {};

\addplot table[x=ndofs,y expr=\thisrow{est}/\thisrow{err}]{figures/res/upwind/P3/curve_0016.txt};
\addplot table[x=ndofs,y expr=\thisrow{est}/\thisrow{err}]{figures/res/upwind/P3/curve_0064.txt};
\addplot table[x=ndofs,y expr=\thisrow{est}/\thisrow{err}]{figures/res/upwind/P3/curve_0256.txt};

\addplot table[x=ndofs,y expr=\thisrow{est}/\thisrow{err}]{figures/res/upwind/P3/curve_1024.txt}
	node[pos=0.0,pin=-45:{$\delta=4^{-5}$}] {};
\end{axis}
\end{tikzpicture}
\end{minipage}

\begin{minipage}{.49\linewidth}
\begin{tikzpicture}
\begin{axis}%
[
	width=\linewidth,
	xlabel={$N_{\rm dofs} \; (p=5)$},
	xmode=log,
	ymode=log,
	ymax=10,
	ymin=5.e-3,
	xmin=0.,
	xmax=2.e5
]
\addplot table[x=ndofs,y expr=\thisrow{est}/\thisrow{err}]{figures/res/upwind/P4/curve_0004.txt}
	node[pos=0.1,pin={[pin distance=4]90:{$\delta = 4^{-1}$}}] {};

\addplot table[x=ndofs,y expr=\thisrow{est}/\thisrow{err}]{figures/res/upwind/P4/curve_0016.txt};
\addplot table[x=ndofs,y expr=\thisrow{est}/\thisrow{err}]{figures/res/upwind/P4/curve_0064.txt};
\addplot table[x=ndofs,y expr=\thisrow{est}/\thisrow{err}]{figures/res/upwind/P4/curve_0256.txt};

\addplot table[x=ndofs,y expr=\thisrow{est}/\thisrow{err}]{figures/res/upwind/P4/curve_1024.txt}
	node[pos=0.4,pin=-90:{$\delta=4^{-5}$}] {};
\end{axis}
\end{tikzpicture}
\end{minipage}
\begin{minipage}{.49\linewidth}
\begin{tikzpicture}
\begin{axis}%
[
	width=\linewidth,
	xlabel={$N_{\rm dofs} \; (p=6)$},
	xmode=log,
	ymode=log,
	ymax=10,
	ymin=5.e-3,
	xmin=0,
	xmax=2.e5
]
\addplot table[x=ndofs,y expr=\thisrow{est}/\thisrow{err}]{figures/res/upwind/P5/curve_0004.txt}
	node[pos=0.0,pin={[pin distance=2.8cm]0:{$\delta = 4^{-1}$}}] {};

\addplot table[x=ndofs,y expr=\thisrow{est}/\thisrow{err}]{figures/res/upwind/P5/curve_0016.txt};
\addplot table[x=ndofs,y expr=\thisrow{est}/\thisrow{err}]{figures/res/upwind/P5/curve_0064.txt};
\addplot table[x=ndofs,y expr=\thisrow{est}/\thisrow{err}]{figures/res/upwind/P5/curve_0256.txt};

\addplot table[x=ndofs,y expr=\thisrow{est}/\thisrow{err}]{figures/res/upwind/P5/curve_1024.txt}
	node[pos=0.2,pin=-45:{$\delta=4^{-5}$}] {};
\end{axis}
\end{tikzpicture}
\end{minipage}

\caption{Nearly resonant example: efficiencies with upwind fluxes}
\label{figure_res_effectivities}
\end{figure}
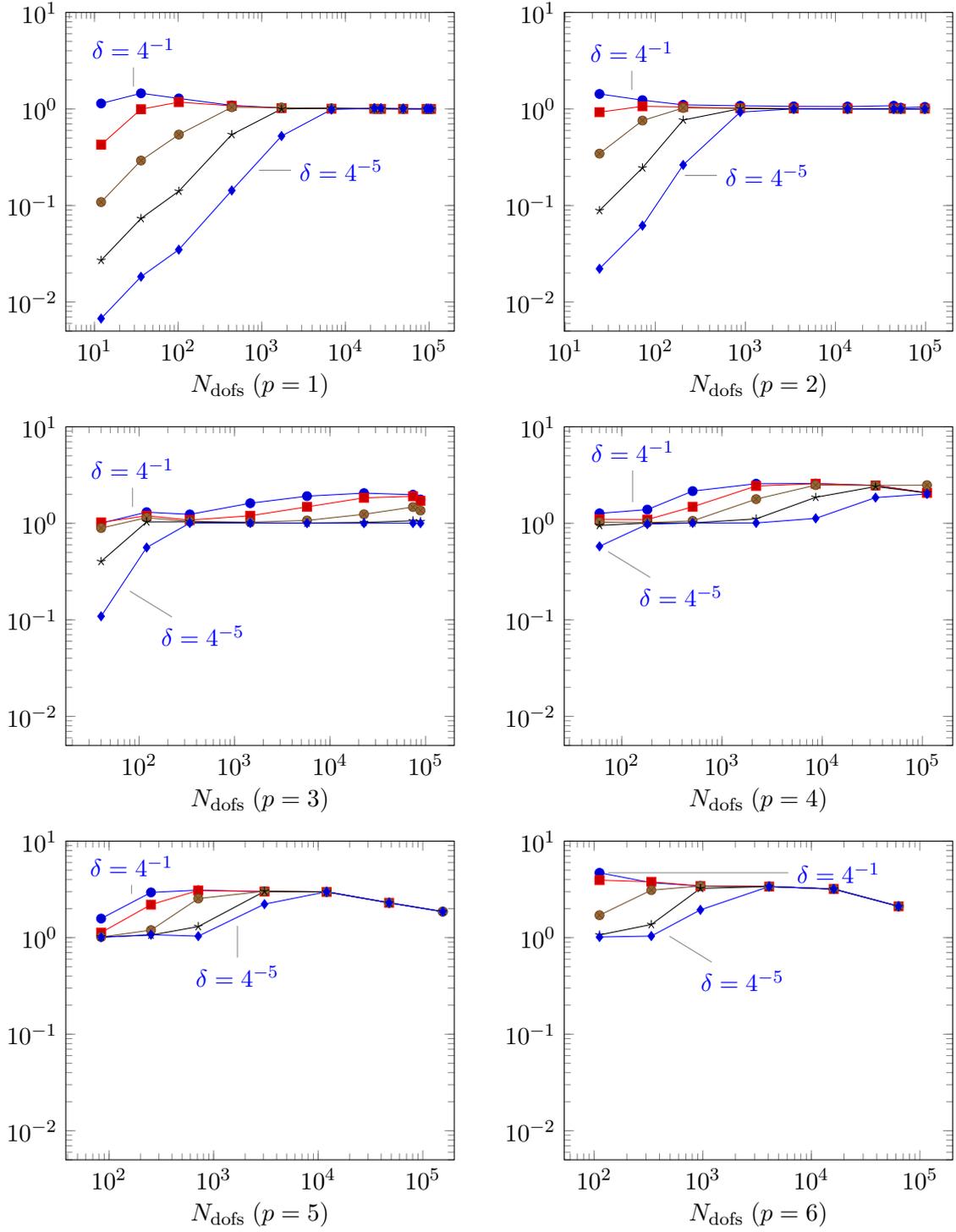

The next purpose of the present test is to analyze the ability of the estimator to drive
$h$-adaptive refinements. Results for $p=3$ and $4$ are presented for the frequencies
$\nu \eq 1.1$ and $5.1$ (respectively close to the resonance frequencies
$\nu_{\rm r} \eq \sqrt{1^2+2^2}/2 \sim 1.118$ and $\sqrt{6^2+8^2}/2 = 5$).
We start with a structured mesh for $p=3$ and an unstructured mesh for $p=4$.
Figure \ref{figure_res_hada_errors} shows the behavior of the error and estimators where our
previous comments on the pre-asymptotic underestimation apply. We further see that
in contrast with the uniform meshes, the optimal convergence rates are observed despite
the finite regularity of the solution, which is in particular due to the refinements close
to the domain's corners that can be seen in Figure \ref{figure_res_hada_img_nu11} when
$\nu \eq 1.1$. Notice that for the higher frequency $\nu \eq 5.1$, we do not observe
local refinements in Figure \ref{figure_res_hada_img_nu51}, which is again in agreement
with \cite{chaumontfrelet_nicaise_2018a} and the comment we made on Figure
\ref{figure_res_errors}. Finally, Figure \ref{figure_res_hada_effectivities} presents the
effectivity indices.  The usual pre-asymptotic underestimation is observed for the higher
frequency. Besides, the estimator seems asymptotic exact, which is in agreement with Remark
\ref{remark_asymptotic_exactness} since local refinements are performed here. We finally
note that Figures \ref{figure_res_hada_img_nu11} and \ref{figure_res_hada_img_nu51}
show an excellent agreement between the elementwise error and estimator.

\begin{figure}
\subcaptionbox{$p=3$}{
\begin{minipage}{.45\linewidth}
\begin{tikzpicture}
\begin{axis}[%
	width=\linewidth,
	xlabel={$N_{\rm dofs} \; (\nu=1.1)$},
	xmode=log,
	ymode=log,
	ymax=10,
	ymin=1.e-6
]

\addplot table[x=ndofs,y=err] {figures/res/upwind/hada/P2_F1.1.txt}
	node[pos=0.2,pin=-90:{$\widetilde \xi$}] {};
\addplot table[x=ndofs,y=est] {figures/res/upwind/hada/P2_F1.1.txt}
	node[pos=0.2,pin= 90:{$\widetilde \eta$}] {};

\addplot[black,dashed,domain=1e3:3.e4] {75*x^(-1.5)};
\SlopeTriangle{.5}{-.15}{.25}{-1.5}{$N_{\rm dofs}^{-p/2}$}{}

\end{axis}
\end{tikzpicture}
\end{minipage}
\begin{minipage}{.45\linewidth}
\begin{tikzpicture}
\begin{axis}[%
	width=\linewidth,
	xlabel={$N_{\rm dofs} \; (\nu=5.1)$},
	xmode=log,
	ymode=log,
	ymax=10,
	ymin=1.e-6
]

\addplot table[x=ndofs,y=err] {figures/res/upwind/hada/P2_F5.1.txt}
	node[pos=0.2,pin=0:{$\widetilde \xi$}] {};
\addplot table[x=ndofs,y=est] {figures/res/upwind/hada/P2_F5.1.txt}
	node[pos=0.2,pin=-90:{$\widetilde \eta$}] {};

\addplot[black,dashed,domain=5e3:8.e4] {8000*x^(-1.5)};
\SlopeTriangle{.6}{-.15}{.4}{-1.5}{$N_{\rm dofs}^{-p/2}$}{}

\end{axis}
\end{tikzpicture}
\end{minipage}
}

\subcaptionbox{$p=4$}{
\begin{minipage}{.45\linewidth}
\begin{tikzpicture}
\begin{axis}[%
	width=\linewidth,
	xlabel={$N_{\rm dofs} \; (\nu=1.1)$},
	xmode=log,
	ymode=log,
	ymax=10,
	ymin=1.e-6
]

\addplot table[x=ndofs,y=err] {figures/res/upwind/hada/P3_F1.1.txt}
	node[pos=0.2,pin=-90:{$\widetilde \xi$}] {};
\addplot table[x=ndofs,y=est] {figures/res/upwind/hada/P3_F1.1.txt}
	node[pos=0.2,pin= 90:{$\widetilde \eta$}] {};

\addplot[black,dashed,domain=7e2:1.e4] {8e2*x^(-2)};
\SlopeTriangle{.45}{-.15}{.25}{-2}{$N_{\rm dofs}^{-p/2}$}{}

\end{axis}
\end{tikzpicture}
\end{minipage}
\begin{minipage}{.45\linewidth}
\begin{tikzpicture}
\begin{axis}[%
	width=\linewidth,
	xlabel={$N_{\rm dofs} \; (\nu=5.1)$},
	xmode=log,
	ymode=log,
	ymax=10,
	ymin=1.e-6
]

\addplot table[x=ndofs,y=err] {figures/res/upwind/hada/P3_F5.1.txt}
	node[pos=.25,pin=0:{$\widetilde \xi$}] {};
\addplot table[x=ndofs,y=est] {figures/res/upwind/hada/P3_F5.1.txt}
	node[pos=0.3,pin=-90:{$\widetilde \eta$}] {};

\addplot[black,dashed,domain=2e3:2.e4] {2e5*x^(-2)};
\SlopeTriangle{.6}{-.15}{.45}{-2}{$N_{\rm dofs}^{-p/2}$}{}

\end{axis}
\end{tikzpicture}
\end{minipage}
}

\caption{Nearly resonant experiment: errors in $h$-adaptive refinements}
\label{figure_res_hada_errors}
\end{figure}
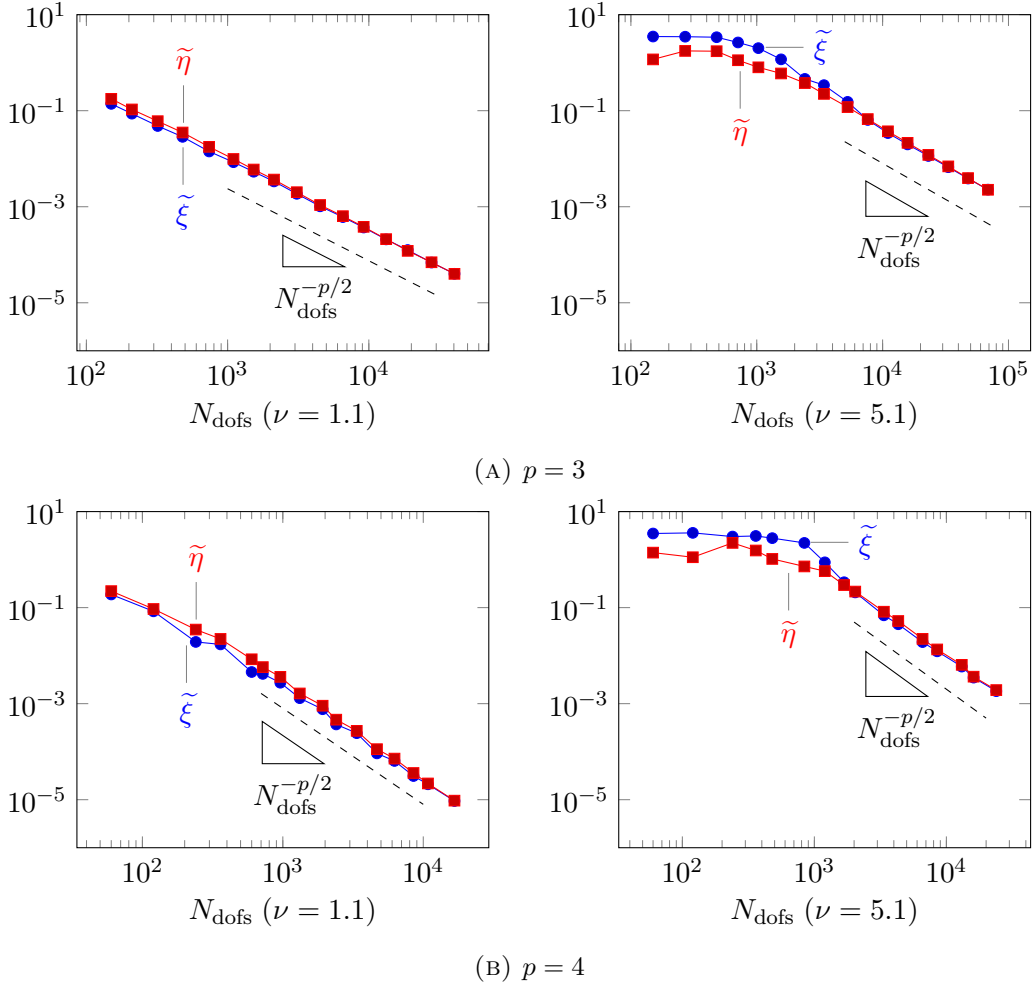

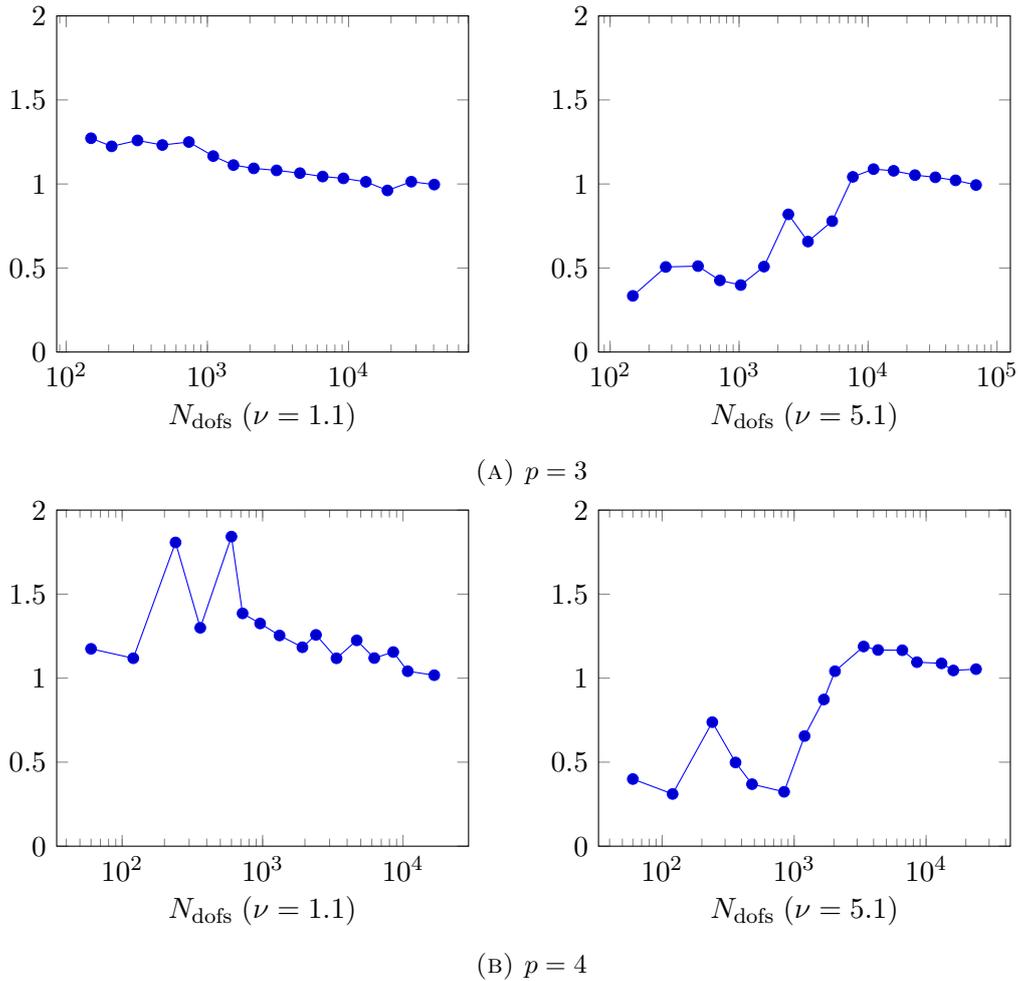
\begin{figure}
\subcaptionbox{$p=3$}{
\begin{minipage}{.45\linewidth}
\begin{tikzpicture}
\begin{axis}[%
	width=\linewidth,
	xlabel={$N_{\rm dofs} \; (\nu=1.1)$},
	xmode=log,
	ymax=2,
	ymin=0
]
\addplot table[x=ndofs,y expr=\thisrow{est}/\thisrow{err}] {figures/res/upwind/hada/P2_F1.1.txt};
\end{axis}
\end{tikzpicture}
\end{minipage}
\begin{minipage}{.45\linewidth}
\begin{tikzpicture}
\begin{axis}[%
	width=\linewidth,
	xlabel={$N_{\rm dofs} \; (\nu=5.1)$},
	xmode=log,
	ymax=2,
	ymin=0
]
\addplot table[x=ndofs,y expr=\thisrow{est}/\thisrow{err}] {figures/res/upwind/hada/P2_F5.1.txt};
\end{axis}
\end{tikzpicture}
\end{minipage}
}

\subcaptionbox{$p=4$}{
\begin{minipage}{.45\linewidth}
\begin{tikzpicture}
\begin{axis}[%
	width=\linewidth,
	xlabel={$N_{\rm dofs} \; (\nu=1.1)$},
	xmode=log,
	ymax=2,
	ymin=0
]
\addplot table[x=ndofs,y expr=\thisrow{est}/\thisrow{err}] {figures/res/upwind/hada/P3_F1.1.txt};
\end{axis}
\end{tikzpicture}
\end{minipage}
\begin{minipage}{.45\linewidth}
\begin{tikzpicture}
\begin{axis}[%
	width=\linewidth,
	xlabel={$N_{\rm dofs} \; (\nu=5.1)$},
	xmode=log,
	ymax=2,
	ymin=0
]
\addplot table[x=ndofs,y expr=\thisrow{est}/\thisrow{err}] {figures/res/upwind/hada/P3_F5.1.txt};
\end{axis}
\end{tikzpicture}
\end{minipage}
}

\caption{Nearly resonant experiment: effectivity indices in $h$-adaptive refinements}
\label{figure_res_hada_effectivities}
\end{figure}

\begin{figure}
\subcaptionbox{$p=3$}{\includegraphics[width=\linewidth]{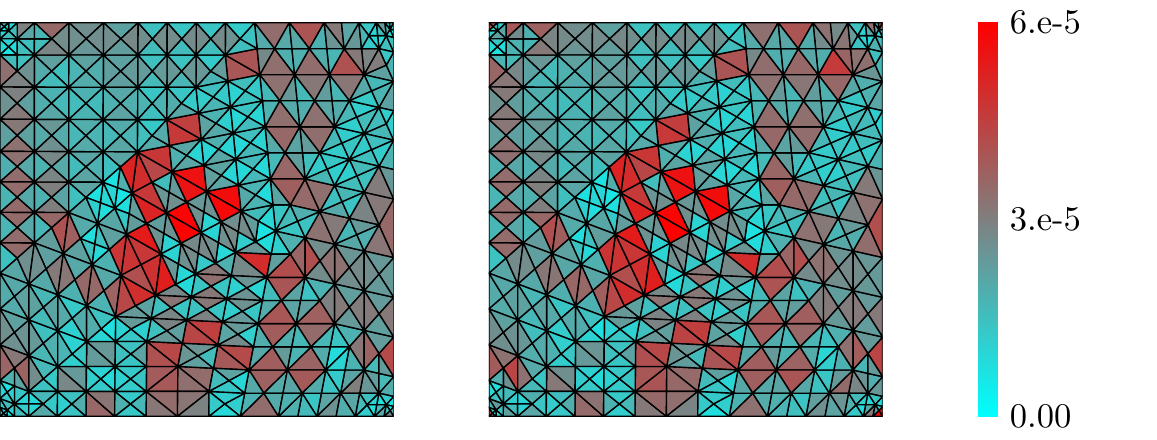}}
\subcaptionbox{$p=4$}{\includegraphics[width=\linewidth]{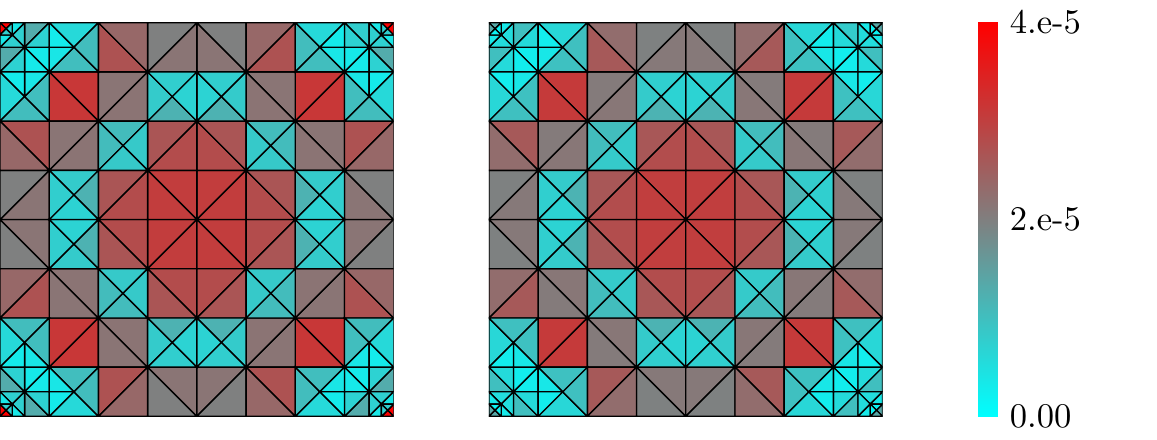}}
\caption{Nearly resonant experiment: actual (left) and estimated (error) at iteration \# 10
in the $h$-adpative example with $\nu=1.1$}
\label{figure_res_hada_img_nu11}
\end{figure}

\begin{figure}
\subcaptionbox{$p=3$}{\includegraphics[width=\linewidth]{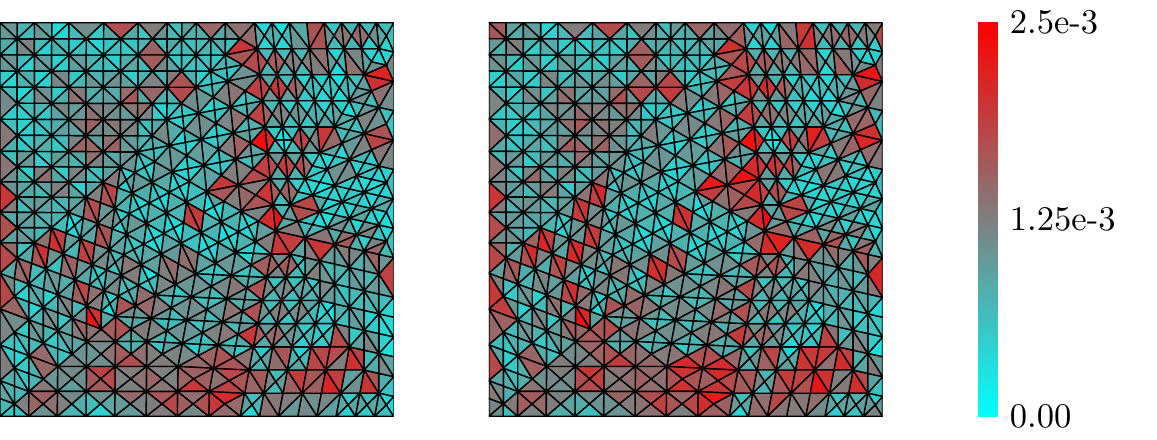}}
\subcaptionbox{$p=4$}{\includegraphics[width=\linewidth]{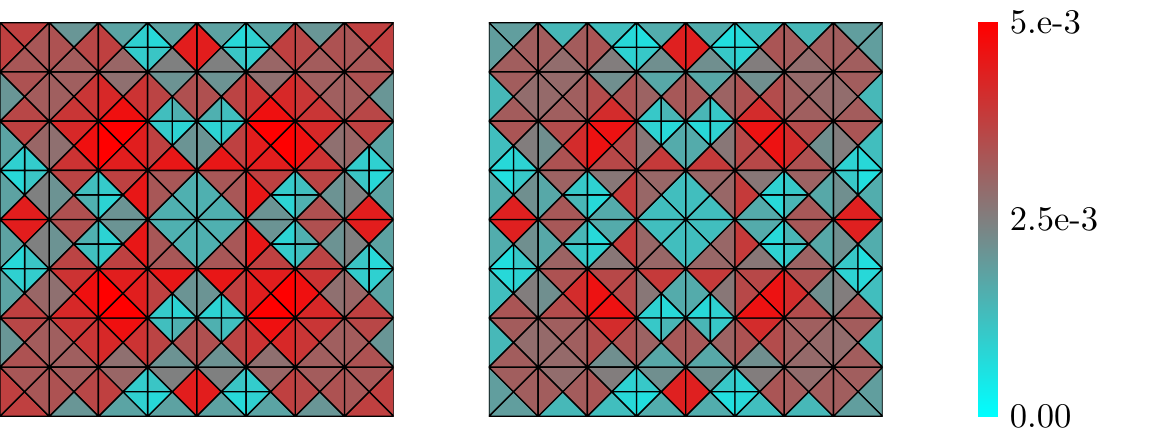}}
\caption{Nearly resonant experiment: actual (left) and estimated (error) at iteration \# 10
in the $h$-adpative example with $\nu=5.1$}
\label{figure_res_hada_img_nu51}
\end{figure}

\subsection{Reflection of a Gaussian beam}

In this example, we consider the reflection of a Gaussian beam by two
heterogeneous prisms modeled by heterogeneous coefficients. The domain
of interest $\Omega_0 \eq (-\ell,\ell)^2$, where $\ell \eq 1$, is surrounded by a PML of thickness
$\ell_{\rm PML} \eq 0.1$, so that $\Omega \eq (-(\ell+\ell_{\rm PML}),\ell+\ell_{\rm PML})^2$.
We set $J \eq 0$, and we define the TF region
$\Omega_{\rm TF} \eq (-\ell_{\rm TF},\ell_{\rm TF})^2$ with $\ell_{\rm TF} \eq 0.9$.
The incident field is the Gaussian beam
\begin{equation*}
E_{\rm inc}(\bx)
\eq
\frac{1}{\sqrt{4\alpha\beta(y_1)}}
\exp\left (-\frac{y_2^2}{4\beta(y_1)}\right )
\exp\left (-i\omega y_1\right ),
\quad
\beta(y_1) \eq \frac{1}{4\alpha} + i\frac{y_1}{2\omega},
\end{equation*}
with $\by = \bx-\bx_0$, which is the solution to paraxial Helhmoltz equation
\begin{equation*}
-\omega^2 u - i\omega \frac{\partial u}{\partial y_1} - \frac{\partial^2 u}{\partial y_2^2} = 0,
\qquad
u(0,y_2) = g(y_2)
\end{equation*}
with initial data $g(y_2) \eq \exp (-\alpha y_2)$, for $\bx_0 \eq (-1.0,0.3)$ and
$\alpha \eq w^{-2}$, $w = 0.1$, and we set $\BH_{\rm inc} \eq (i\omega)^{-1} \ccurl E_{\rm inc}$.
Notice that, strictly speaking, these incident fields do not enter our assumption in
\eqref{eq_assumption_incident_fields} because $E_{\rm inc}$ solves the paraxial Helmholtz equation,
which is only an approximation of the ``true'' Helmholtz equation. Nevertheless, the paraxial
approximation is fairly good for Gaussian beams, so the resulting discrepancy is not seen
numerically for the accuracy level we target. The prisms are the two triangles
\begin{equation*}
P_1 \eq \{( 0.3,0.0),(0.6, 0.6),( 0.0, 0.6)\}, \quad
P_2 \eq \{(-0.3,0.0),(0.0,-0.6),(-0.6,-0.6)\},
\end{equation*}
and the electric permittivity is defined by $\varepsilon \eq 1/10$ in $P_1 \cup P_2$ and
$\varepsilon \eq 1$ outside, and we apply the usual modifications in the PMLs.
We also select the frequency $\nu \eq 50$.
The whole setting is sketched on Figure \ref{figure_beam_setting}, where the
initial mesh is also represented.

We start the adaptive algorithm with the mesh represented in Figure \ref{figure_beam_setting},
and we employ the fixed polynomial degree $p \eq 6$. We run the adaptive loop for 30
iterations, and the final discrete solution is represented in Figure \ref{figure_beam_solution}.
In Figure \ref{figure_beam_convergence}, we plot the value of the estimator against the
number of degrees of freedom throughout the adaptive loop (we do not represent the true
error as it is not available here). We observe an initial stagnation, which is coherent
with other experiments of adaptivity for high-frequency waves \cite{chaumontfrelet_ern_vohralik_2019}.
This initial stagnation is actually expected since we start with a largely unresolved
mesh. We then see that the optimal convergence rate is asymptotically reached.

Figure \ref{figure_beam_images} depicts the solution, the mesh size, and the estimator
at various iterations of the adaptive loop. Interestingly, we see that the refinements
essentially follow the wavefront until the beam goes through the whole domain. More
uniform refinements then occur to capture the diffracted rays. We also observe that
the mesh is refined on some corners of the prisms, which agrees with the expected
presence of singularities at these points.

\begin{figure}

\begin{minipage}{.45\linewidth}
\begin{tikzpicture}[scale=2.5]
\draw (-1.1,-1.1) rectangle (1.1,1.1);

\draw (-1.1,-1.0) -- ( 1.1,-1.0);
\draw (-1.1, 1.0) -- ( 1.1, 1.0);

\draw (-1.0,-1.1) -- (-1.0, 1.1);
\draw ( 1.0,-1.1) -- ( 1.0, 1.1);

\draw (-0.9,-0.9) rectangle (0.9,0.9);

\draw[pattern=north west lines] ( 0.3,0.0) -- (0.6, 0.6) -- ( 0.0, 0.6) -- cycle;
\draw[pattern=north west lines] (-0.3,0.0) -- (0.0,-0.6) -- (-0.6,-0.6) -- cycle;
\end{tikzpicture}
\end{minipage}
\begin{minipage}{.45\linewidth}
\input{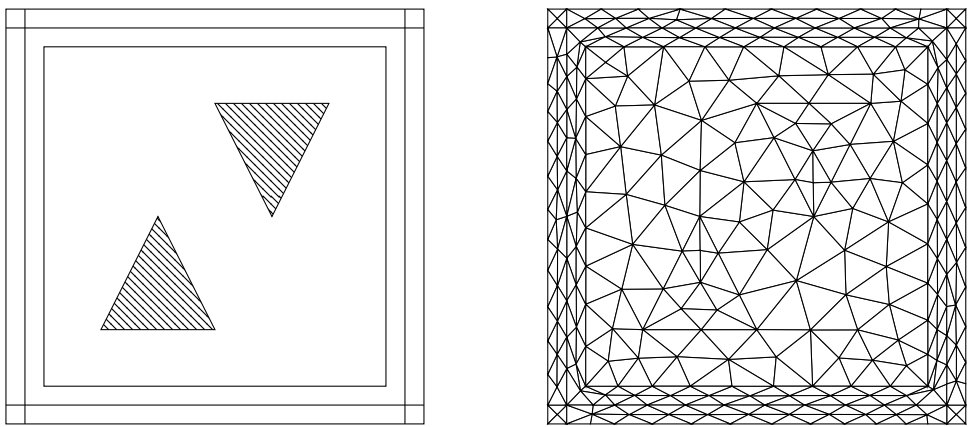}
\end{minipage}

\caption{Setting and initial mesh of the Gaussian beam experiment}
\label{figure_beam_setting}
\end{figure}

\begin{figure}
% \begin{minipage}{.40\linewidth}
% \scalebox{1}[-1]{\includegraphics[width=\linewidth]{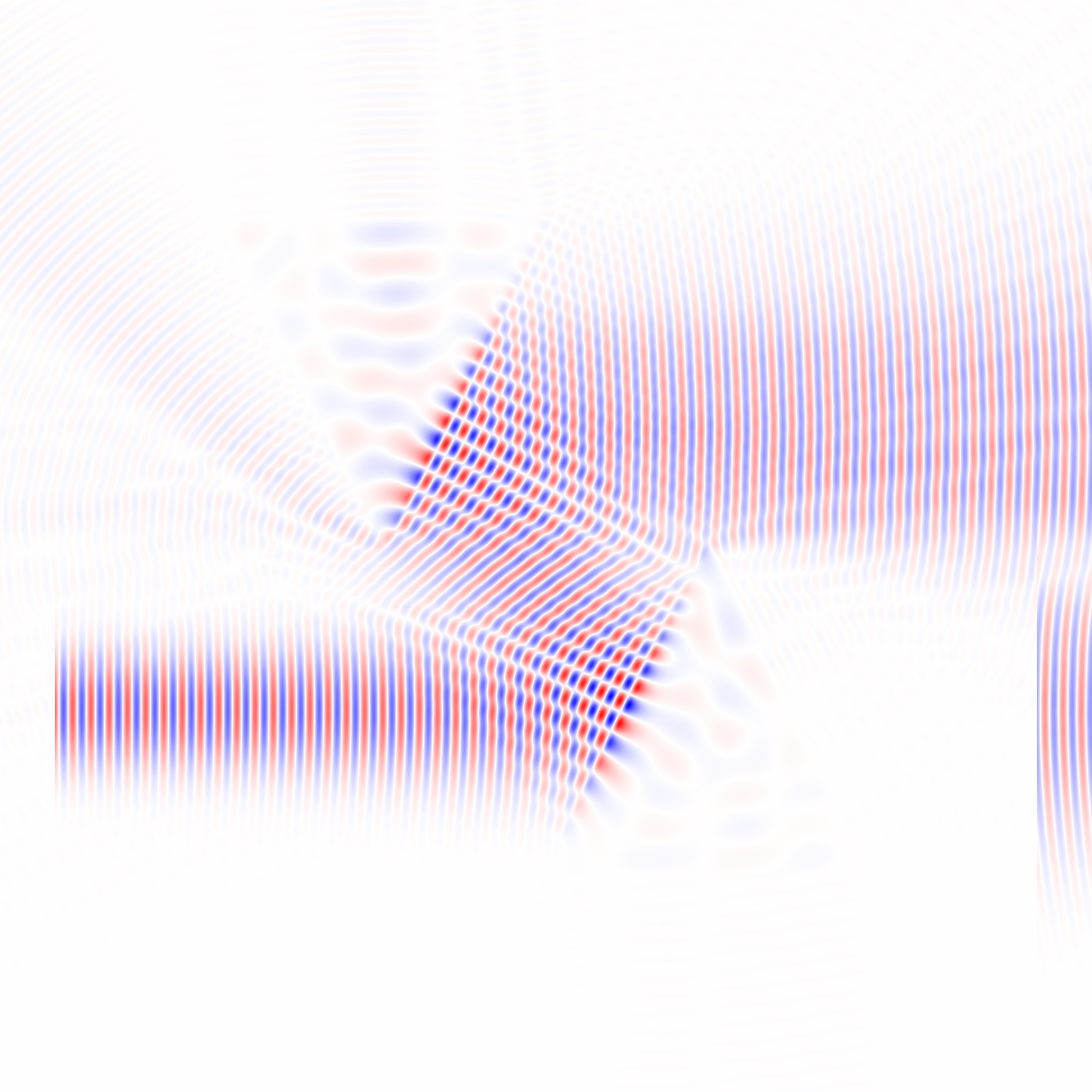}}
% \end{minipage}
% \begin{minipage}{.40\linewidth}
% \scalebox{1}[-1]{\includegraphics[width=\linewidth]{}}
% \end{minipage}

\begin{minipage}{.90\linewidth}
\scalebox{1}[-1]{\includegraphics[width=\linewidth]{figures/beam/real}}
\end{minipage}
\begin{minipage}{.05\linewidth}
\begin{tikzpicture}
\draw (0,-2.50) node[anchor=south west,inner sep=0]%
{\includegraphics[width=0.25cm,height=5.0cm]{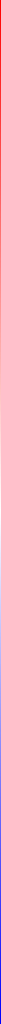}};
\draw[thick] (0,-2.50) rectangle (.25,2.50);

\draw (0.25,-2.50) node [anchor=west] {$-4$};
\draw (0.25, 2.50) node [anchor=west] {$\;4$};
\end{tikzpicture}
\end{minipage}
\caption{Real part of the solution in the Gaussian beam experiment}
\label{figure_beam_solution}
\end{figure}

\begin{figure}
\begin{tikzpicture}
\begin{axis}
[
	width=\linewidth,
	xlabel={$N_{\rm dofs}$},
	ylabel={$\widetilde \eta$},
	xmode=log,
	ymode=log
]

\addplot[black,mark=o]
table[x = ndofs,y expr = \thisrow{est}/126.792]
{figures/beam/data.txt};

\plot[domain=2e5:2e6,dashed] {1e14*x^(-3)};

\SlopeTriangle{.8}{-.075}{.1}{-3}{$N_{\rm dofs}^{-3}$}{}

\end{axis}
\end{tikzpicture}

\caption{Convergence in the Gaussian beam experiment}
\label{figure_beam_convergence}
\end{figure}
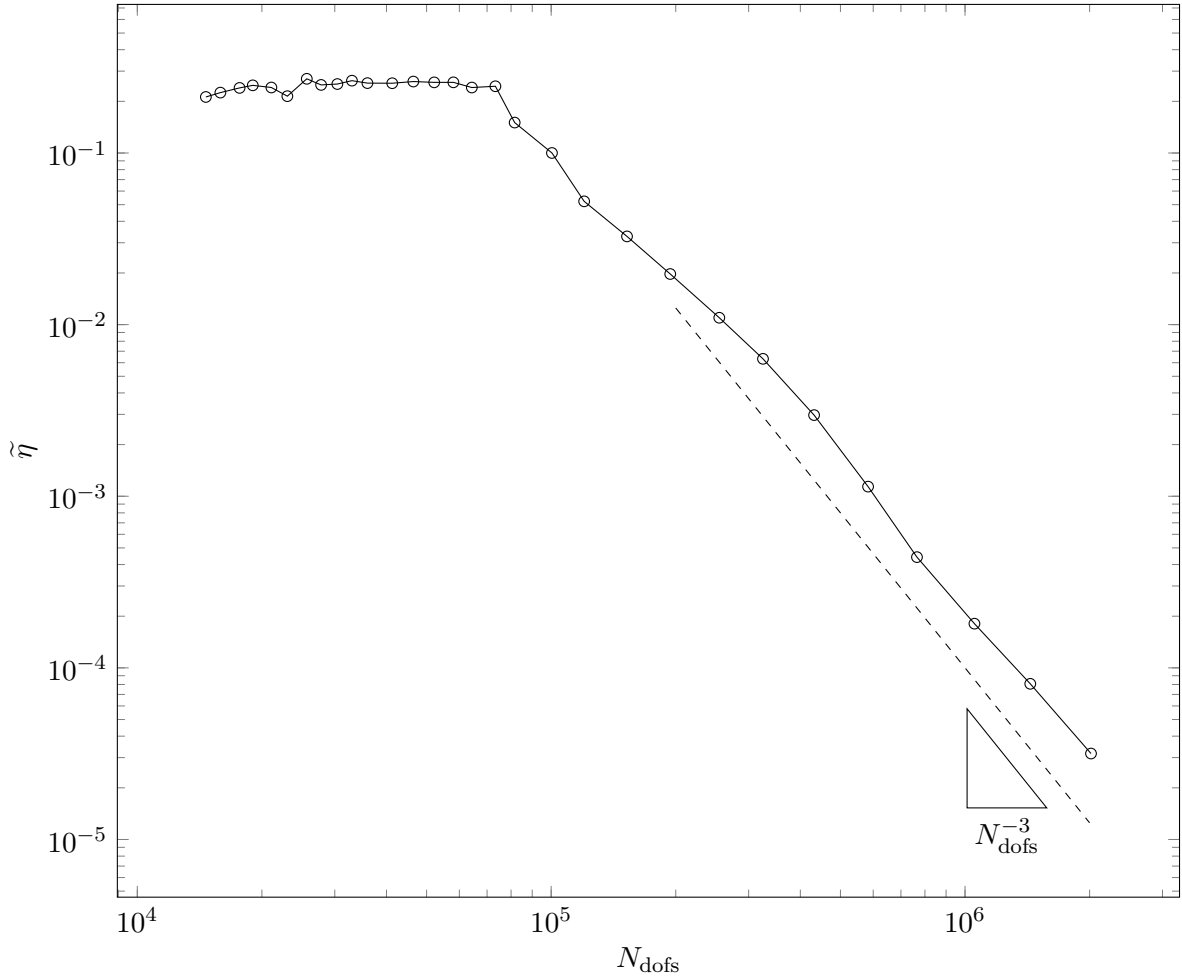

\begin{figure}

\begin{minipage}{.30\linewidth}
\scalebox{1}[-1]{\includegraphics[width=\linewidth]{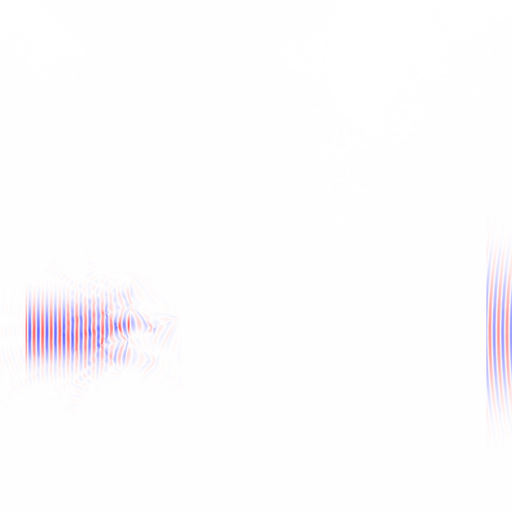}}
\end{minipage}
\begin{minipage}{.30\linewidth}
\scalebox{1}[-1]{\includegraphics[width=\linewidth]{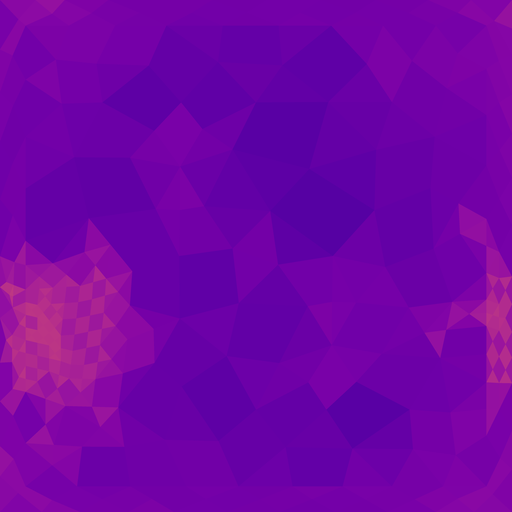}}
\end{minipage}
\begin{minipage}{.30\linewidth}
\scalebox{1}[-1]{\includegraphics[width=\linewidth]{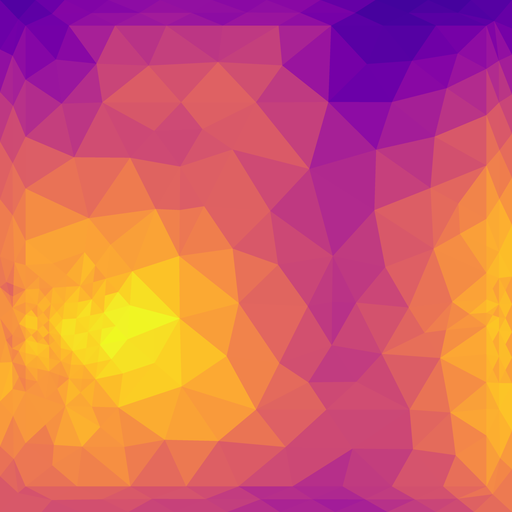}}
\end{minipage}
\begin{turn}{-90}
\hspace{-1cm}Iteration \#5
\end{turn}

\begin{minipage}{.30\linewidth}
\scalebox{1}[-1]{\includegraphics[width=\linewidth]{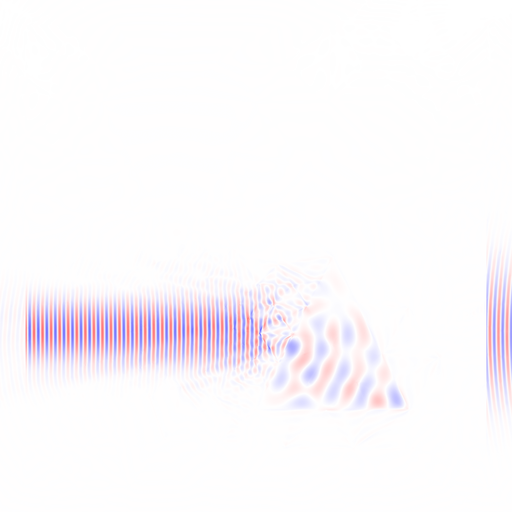}}
\end{minipage}
\begin{minipage}{.30\linewidth}
\scalebox{1}[-1]{\includegraphics[width=\linewidth]{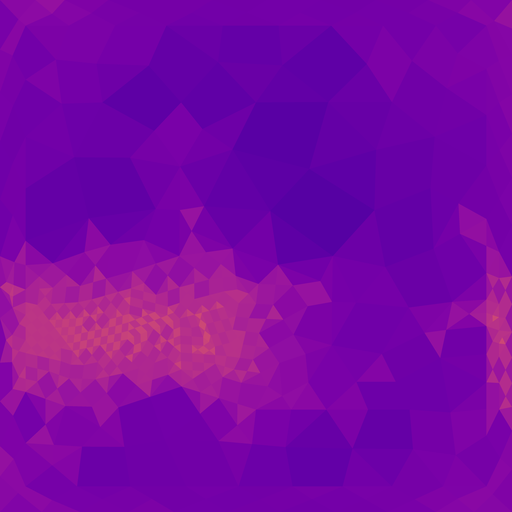}}
\end{minipage}
\begin{minipage}{.30\linewidth}
\scalebox{1}[-1]{\includegraphics[width=\linewidth]{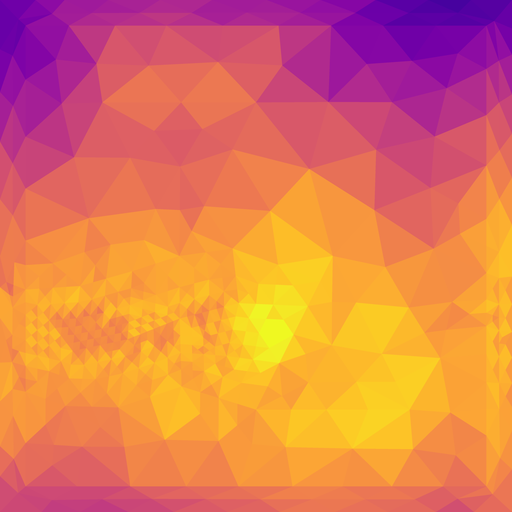}}
\end{minipage}
\begin{turn}{-90}
\hspace{-1cm}Iteration \#10
\end{turn}

\begin{minipage}{.30\linewidth}
\scalebox{1}[-1]{\includegraphics[width=\linewidth]{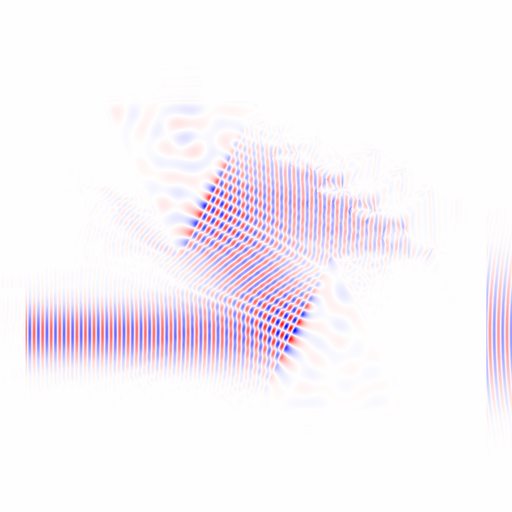}}
\end{minipage}
\begin{minipage}{.30\linewidth}
\scalebox{1}[-1]{\includegraphics[width=\linewidth]{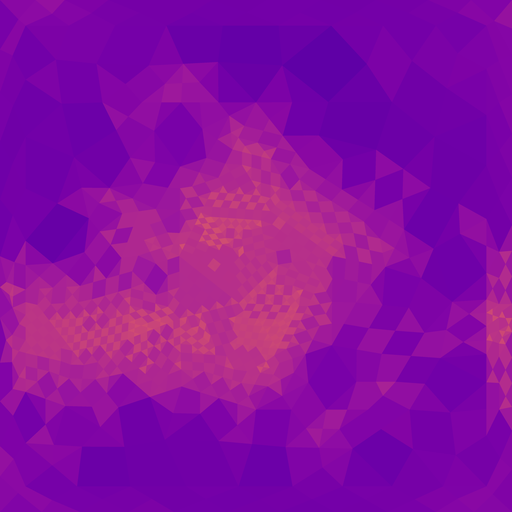}}
\end{minipage}
\begin{minipage}{.30\linewidth}
\scalebox{1}[-1]{\includegraphics[width=\linewidth]{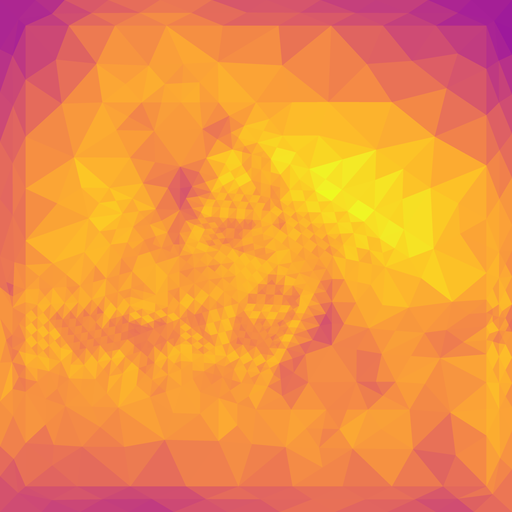}}
\end{minipage}
\begin{turn}{-90}
\hspace{-1cm}Iteration \#15
\end{turn}

% \begin{minipage}{.30\linewidth}
% \scalebox{1}[-1]{\includegraphics[width=\linewidth]{}}
% \end{minipage}
% \begin{minipage}{.30\linewidth}
% \scalebox{1}[-1]{\includegraphics[width=\linewidth]{}}
% \end{minipage}
% \begin{minipage}{.30\linewidth}
% \scalebox{1}[-1]{\includegraphics[width=\linewidth]{}}
% \end{minipage}
% 
% \begin{minipage}{.30\linewidth}
% \scalebox{1}[-1]{\includegraphics[width=\linewidth]{}}
% \end{minipage}
% \begin{minipage}{.30\linewidth}
% \scalebox{1}[-1]{\includegraphics[width=\linewidth]{}}
% \end{minipage}
% \begin{minipage}{.30\linewidth}
% \scalebox{1}[-1]{\includegraphics[width=\linewidth]{}}
% \end{minipage}

\begin{minipage}{.30\linewidth}
\scalebox{1}[-1]{\includegraphics[width=\linewidth]{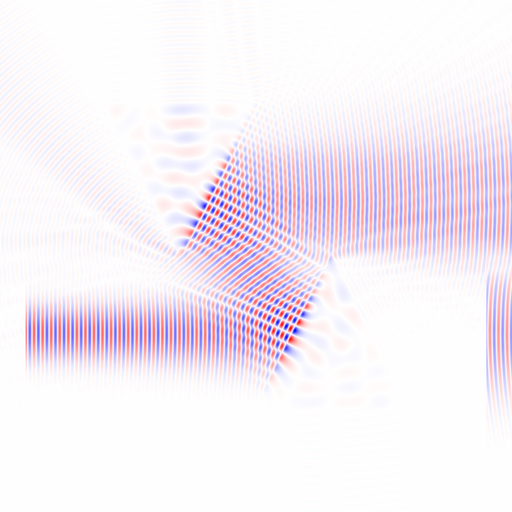}}
\end{minipage}
\begin{minipage}{.30\linewidth}
\scalebox{1}[-1]{\includegraphics[width=\linewidth]{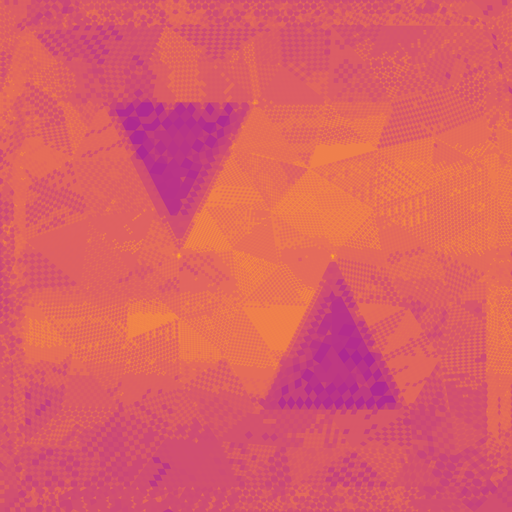}}
\end{minipage}
\begin{minipage}{.30\linewidth}
\scalebox{1}[-1]{\includegraphics[width=\linewidth]{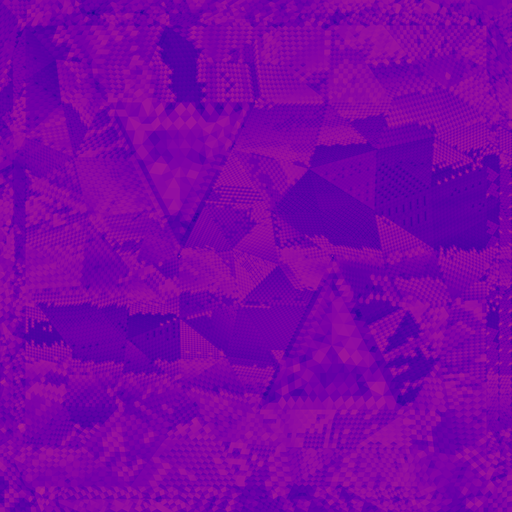}}
\end{minipage}
\begin{turn}{-90}
\hspace{-1cm}Iteration \#30
\end{turn}

\begin{minipage}{.30\linewidth}
\begin{tikzpicture}
\node[inner sep=0pt,anchor=south west] at (0cm,0cm) {\includegraphics[width=2.5cm,height=0.5cm]{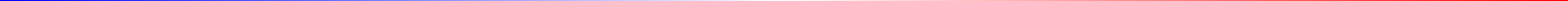}};

\draw[thick] (0.0cm,0.0cm) rectangle (2.5cm,0.5cm);

\draw[dashed] (1.25cm,0.0cm) -- (1.25cm,0.5cm);

\node[anchor=north] at (0.00cm,0.0cm) {$-1.7$};
\node[anchor=north] at (1.25cm,0.0cm) {$ 0.0$};
\node[anchor=north] at (2.50cm,0.0cm) {$ 1.7$};
\end{tikzpicture}
\end{minipage}
\begin{minipage}{.30\linewidth}
\begin{tikzpicture}
\node[inner sep=0pt,anchor=south west] at (0cm,0cm) {\includegraphics[width=2.5cm,height=0.5cm]{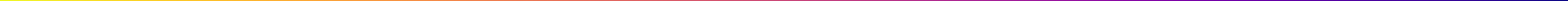}};

\draw[thick] (0.0cm,0.0cm) rectangle (2.5cm,0.5cm);

\draw[dashed] (0.83333cm,0.0cm) -- (0.83333cm,0.5cm);
\draw[dashed] (1.66666cm,0.0cm) -- (1.66666cm,0.5cm);

\node[anchor=north] at (0.00000cm,0.0cm) {$10^{-3}$};
\node[anchor=north] at (0.83333cm,0.0cm) {$10^{-2}$};
\node[anchor=north] at (1.66666cm,0.0cm) {$10^{-1}$};
\node[anchor=north] at (2.50000cm,0.0cm) {$1$};
\end{tikzpicture}
\end{minipage}
\begin{minipage}{.30\linewidth}
\begin{tikzpicture}
\node[inner sep=0pt,anchor=south west] at (0cm,0cm) {\includegraphics[width=2.5cm,height=0.5cm]{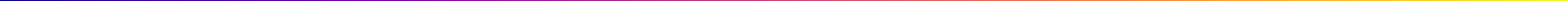}};

\draw[thick] (0.0cm,0.0cm) rectangle (2.5cm,0.5cm);

\draw[dashed] (0.83333cm,0.0cm) -- (0.83333cm,0.5cm);
\draw[dashed] (1.66666cm,0.0cm) -- (1.66666cm,0.5cm);

\node[anchor=north] at (0.00000cm,0.0cm) {$10^{-7}$};
\node[anchor=north] at (0.83333cm,0.0cm) {$10^{-5}$};
\node[anchor=north] at (1.66666cm,0.0cm) {$10^{-3}$};
\node[anchor=north] at (2.50000cm,0.0cm) {$1$};
\end{tikzpicture}
\end{minipage}

\caption{Solution mesh size and estimator in the Gaussian beam experiment}
\label{figure_beam_images}
\end{figure}

\subsection{Scattering by an aircraft}

Our final example is the scattering of planewave by an aircraft.
The incident field is given by $E_{\rm inc} \eq e^{i\omega\bd\cdot\bx}$
with $\bd = (\cos\phi,\sin\phi)$ and $\phi \eq 13\pi/12$ whereas
$\BH_{\rm inc} \eq (i\omega)^{-1}\ccurl E_{\rm inc}$. The domain of interest is
the square $\Omega_0 \eq (-\ell,\ell)^2 \setminus \overline{A}$, where
$A \subset\!\subset \Omega_0$ is an open Lipschitz polygon with 76 vertices
representing an aircraft (see Figure \ref{figure_aircraft_setting}) and $\ell \eq 40$,
the TF region is $\Omega_{\rm TF} \eq (-\ell_{\rm TF},\ell_{\rm TF})^2 \setminus A$ with
$\ell_{\rm TF}$ and we surround $\Omega_0$ with PMLs of length $\ell_{\rm PML} \eq 5$.
We impose the PEC condition on the boundary of
$\Omega \eq (-(\ell+\ell_{\rm PML}),\ell+\ell_{\rm PML})^2 \setminus \overline{A}$.
Figure \ref{figure_aircraft_setting} depicts the whole setting.

We employ the $hp$-adaptive algorithm presented in Section \ref{section_hp_adaptivity}
for $15$ iterations, starting with the mesh shown in Figure \ref{figure_aircraft_setting}
with a uniform polynomial degree distribution $p = 1$. We consider two frequencies:
$\nu \eq 0.1$ and $\nu \eq 0.5$. %% TODO: comment about real world scaling
The solutions are represented in Figure \ref{figure_aircraft_solution},
whereas Figure \ref{figure_aircraft_convergence} shows the convergence history of the
$hp$-adaptive loop in both cases. We observe an exponential convergence rate for the
two frequencies (following \cite{melenk_wohlmuth_2001a}
and \cite[Theorem 4.63]{schwab_1998a}, we plot the estimator $\widetilde \eta$
in log-scale against $N_{\rm dofs}^{1/3}$ in linear-scale), indicating that the proposed
estimator is suited to drive $hp$-adaptive algorithms.

\begin{figure}

\begin{minipage}{.45\linewidth}
\begin{tikzpicture}[scale=.04]
\begin{scope}[yscale=-1,xscale=1]
% \node[inner sep=0pt] at (0,0)
% {\includegraphics[width=34pt]{background}};

\draw (-1.5*45,-1.5*45) rectangle (1.5*45,1.5*45);

\draw (-1.5*35,-1.5*35) rectangle (1.5*35,1.5*35);

\draw (-1.5*45,-1.5*40) -- ( 1.5*45,-1.5*40);
\draw (-1.5*45, 1.5*40) -- ( 1.5*45, 1.5*40);

\draw (-1.5*40,-1.5*45) -- (-1.5*40, 1.5*45);
\draw ( 1.5*40,-1.5*45) -- ( 1.5*40, 1.5*45);

\fill
( 14.1-14.1-45.0,38.4-38.4) -- 
( 15.2-14.1-45.0,37.2-38.4) -- 
( 25.4-14.1-45.0,34.4-38.4) -- 
( 40.0-14.1-45.0,34.4-38.4) -- 
( 48.3-14.1-45.0,25.5-38.4) -- 
( 44.4-14.1-45.0,25.5-38.4) -- 
( 44.4-14.1-45.0,22.0-38.4) -- 
( 48.4-14.1-45.0,22.2-38.4) -- 
( 48.5-14.1-45.0,22.7-38.4) -- 
( 50.6-14.1-45.0,22.8-38.4) -- 
( 58.5-14.1-45.0,14.4-38.4) -- 
( 54.4-14.1-45.0,14.5-38.4) -- 
( 54.5-14.1-45.0,11.1-38.4) -- 
( 58.5-14.1-45.0,11.2-38.4) -- 
( 58.5-14.1-45.0,11.7-38.4) -- 
( 60.7-14.1-45.0,11.7-38.4) -- 
( 69.8-14.1-45.0, 1.8-38.4) -- 
( 79.0-14.1-45.0, 1.7-38.4) -- 
( 75.0-14.1-45.0, 2.0-38.4) -- 
( 67.0-14.1-45.0,15.3-38.4) -- 
( 68.0-14.1-45.0,15.7-38.4) -- 
( 66.5-14.1-45.0,16.0-38.4) -- 
( 64.1-14.1-45.0,20.0-38.4) -- 
( 65.2-14.1-45.0,20.4-38.4) -- 
( 63.8-14.1-45.0,20.7-38.4) -- 
( 62.3-14.1-45.0,23.3-38.4) -- 
( 61.0-14.1-45.0,27.6-38.4) -- 
( 62.2-14.1-45.0,28.0-38.4) -- 
( 60.8-14.1-45.0,28.3-38.4) -- 
( 59.9-14.1-45.0,31.1-38.4) -- 
( 61.1-14.1-45.0,31.5-38.4) -- 
( 59.7-14.1-45.0,31.7-38.4) -- 
( 58.7-14.1-45.0,34.4-38.4) -- 
( 81.8-14.1-45.0,34.5-38.4) -- 
( 88.7-14.1-45.0,36.0-38.4) -- 
( 98.9-14.1-45.0,25.1-38.4) -- 
(102.3-14.1-45.0,24.9-38.4) -- 
( 98.5-14.1-45.0,37.9-38.4) -- 
(104.1-14.1-45.0,38.4-38.4) -- 
( 98.5-14.1-45.0,38.4-37.9) -- 
(102.3-14.1-45.0,38.4-24.9) -- 
( 98.9-14.1-45.0,38.4-25.1) -- 
( 88.7-14.1-45.0,38.4-36.0) -- 
( 81.8-14.1-45.0,38.4-34.5) -- 
( 58.7-14.1-45.0,38.4-34.4) -- 
( 59.7-14.1-45.0,38.4-31.7) -- 
( 61.1-14.1-45.0,38.4-31.5) -- 
( 59.9-14.1-45.0,38.4-31.1) -- 
( 60.8-14.1-45.0,38.4-28.3) -- 
( 62.2-14.1-45.0,38.4-28.0) -- 
( 61.0-14.1-45.0,38.4-27.6) -- 
( 62.3-14.1-45.0,38.4-23.3) -- 
( 63.8-14.1-45.0,38.4-20.7) -- 
( 65.2-14.1-45.0,38.4-20.4) -- 
( 64.1-14.1-45.0,38.4-20.0) -- 
( 66.5-14.1-45.0,38.4-16.0) -- 
( 68.0-14.1-45.0,38.4-15.7) -- 
( 67.0-14.1-45.0,38.4-15.3) -- 
( 75.0-14.1-45.0,38.4- 2.0) -- 
( 79.0-14.1-45.0,38.4- 1.7) -- 
( 69.8-14.1-45.0,38.4- 1.8) -- 
( 60.7-14.1-45.0,38.4-11.7) -- 
( 58.5-14.1-45.0,38.4-11.7) -- 
( 58.5-14.1-45.0,38.4-11.2) -- 
( 54.5-14.1-45.0,38.4-11.1) -- 
( 54.4-14.1-45.0,38.4-14.5) -- 
( 58.5-14.1-45.0,38.4-14.4) -- 
( 50.6-14.1-45.0,38.4-22.8) -- 
( 48.5-14.1-45.0,38.4-22.7) -- 
( 48.4-14.1-45.0,38.4-22.2) -- 
( 44.4-14.1-45.0,38.4-22.0) -- 
( 44.4-14.1-45.0,38.4-25.5) -- 
( 48.3-14.1-45.0,38.4-25.5) -- 
( 40.0-14.1-45.0,38.4-34.4) -- 
( 25.4-14.1-45.0,38.4-34.4) -- 
( 15.2-14.1-45.0,38.4-37.2) -- cycle;
\end{scope}
\end{tikzpicture}
\end{minipage}
\begin{minipage}{.45\linewidth}
\input{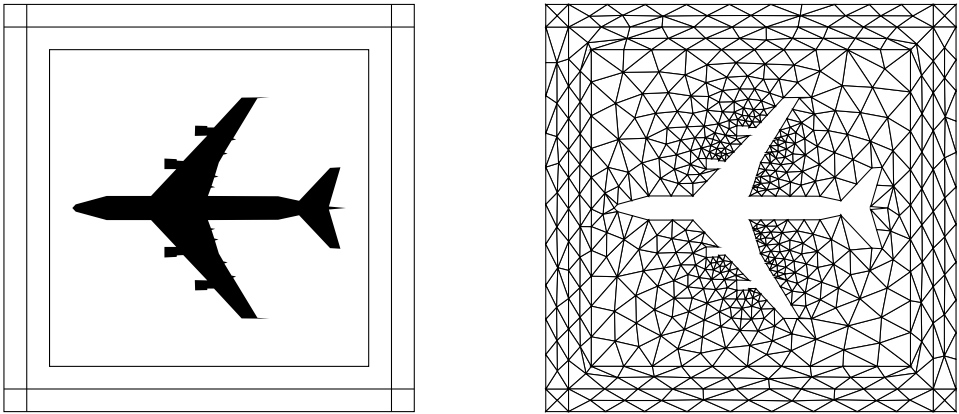}
\end{minipage}

\caption{Setting and initial mesh of the aircraft experiment}
\label{figure_aircraft_setting}
\end{figure}

\begin{figure}
\begin{minipage}{.45\linewidth}
\scalebox{1}[-1]{\includegraphics[width=5.5cm]{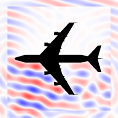}}
\subcaption{$\nu = 0.1$}
\end{minipage}
\begin{minipage}{.45\linewidth}
\scalebox{1}[-1]{\includegraphics[width=5.5cm]{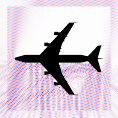}}
\subcaption{$\nu = 0.5$}
\end{minipage}
\begin{minipage}{.05\linewidth}
\vspace{-1cm}
\begin{tikzpicture}
\draw (0,-2.50) node[anchor=south west,inner sep=0]%
{\includegraphics[width=0.25cm,height=5.0cm]{figures/cbar_bwr}};
\draw[thick] (0,-2.50) rectangle (.25,2.50);

\draw (0.25,-2.50) node [anchor=west] {$-4$};
\draw (0.25, 2.50) node [anchor=west] {$\;4$};
\end{tikzpicture}
\end{minipage}
\caption{Real part of the solutions in the aircraft experiment}
\label{figure_aircraft_solution}
\end{figure}

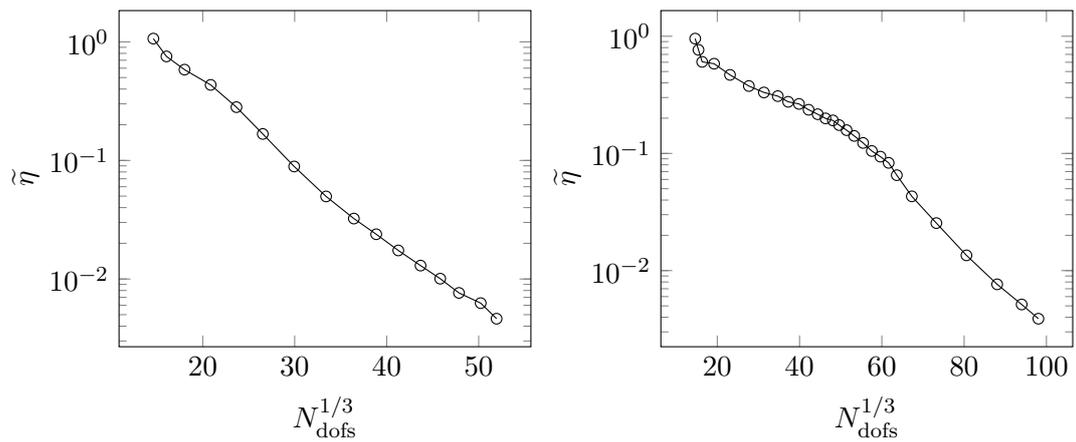
\begin{figure}
\begin{minipage}{.45\linewidth}
\begin{tikzpicture}
\begin{axis}
[
	width=\linewidth,
	xlabel={$N_{\rm dofs}^{1/3}$},
	ylabel={$\widetilde \eta$},
	ymode=log
]

\addplot[black,mark=o]
table[x expr = \thisrow{ndofs}^(1./3.),y expr = \thisrow{est}/57.7178]
{figures/aircraft/N1/data.txt};

\end{axis}
\end{tikzpicture}
\end{minipage}
\begin{minipage}{.45\linewidth}
\begin{tikzpicture}
\begin{axis}
[
	width=\linewidth,
	xlabel={$N_{\rm dofs}^{1/3}$},
	ylabel={$\widetilde \eta$},
	ymode=log
]

\addplot[black,mark=o]
table[x expr = \thisrow{ndofs}^(1./3.),y expr = \thisrow{est}/288.589]
{figures/aircraft/N5/data.txt};

\end{axis}
\end{tikzpicture}
\end{minipage}
\caption{Convergence in the aircraft experiment}
\label{figure_aircraft_convergence}
\end{figure}

\section{Conclusion}
\label{sec_conclusion}

We propose a new residual-based a posteriori error estimator
for discontinuous Galerkin discretizations (DG) of time-harmonic
Maxwell's equations in first-order form. Our estimator covers
a range of numerical DG fluxes, including the so-called ``central''
and ``upwind'' fluxes. We rigorously establish that the estimator
is reliable and efficient, with constants that are independent
of the frequency for sufficiently refined meshes. Besides, we show
that our estimator is asymptotically constant-free for a smooth solution.

We also present a set of numerical examples that highlights our key
findings. We find an excellent agreement between these examples
and the behavior predicted by the theory for our estimator. In
addition, we employ the estimator to drive $h$- and $hp$-adaptive
refinement processes. These examples suggest that the proposed estimator
is capable of driving such refinement processes since, in all cases, we
observe optimal convergence rates.

\section*{Funding}
PV was supported by the Chilean National Research and Development Agency (ANID) though the grant ANID FONDECYT No. 3220858.

\bibliographystyle{amsplain}
\bibliography{bibliography.bib}

\end{document}